\newcounter{rem}
\newenvironment{remark}{%
  \refstepcounter{rem}%
  \par\medskip\noindent%
  \textbf{Remark \therem\ ---}\hspace{1ex}}%
{\hfill$\lrcorner$\par\medskip}
\newcounter{lem}
\newenvironment{lemma}{%
  \refstepcounter{lem}%
  \par\medskip\noindent%
  \textbf{Lemma \thelem\ ---}\hspace{1ex}}%
{\hfill$\lrcorner$\par\medskip}
\newcounter{que}
{\hfill$\lrcorner$\par\medskip}
\newcounter{hyp}
{\hfill$\lrcorner$\par\medskip}
\newcounter{def}
\newenvironment{definition}{%
  \refstepcounter{def}%
  \par\medskip\noindent%
  \textbf{Definition \thedef\ ---}\hspace{1ex}}%
{\hfill$\lrcorner$\par\medskip}
\newcounter{exm}
\newcounter{xm}
\newenvironment{example}{%
  \refstepcounter{exm}%
  \setcounter{xm}{\theexm}%
  \par\medskip\noindent%
  \textbf{Example \theexm\ ---}\hspace{1ex}}%
{\hfill $-$Ex~\theexm$\dashv$\par\medskip}
{\hfill$\lrcorner$\par\medskip}
\renewcommand{\square}{{\rule{0.5em}{0.5em}}}
\newenvironment{proof}[1][Proof]%
{\noindent\textbf{#1.\ }}%
{\hfill \square\par\medskip}
\newtheorem{theorem}{Theorem}
\newtheorem{proposition}[theorem]{Proposition}
\newtheorem{corollary}[theorem]{Corollary}
\newtheorem{notation}[theorem]{Notation}
\DeclareMathOperator{\Ass}{\mathtt{Ass}}
\DeclareMathOperator{\ord}{ord}
\DeclareMathOperator{\card}{card}
\DeclareMathOperator{\Dom}{Dom}
\DeclareMathOperator{\discr}{discr}
\newcommand{\DifferentialAlgebraicSystem}{\textsc{dae}}
\newcommand{\indexa}{i}
\newcommand{\indexb}{j}
\newcommand{\indexc}{h}
\newcommand{\TransposedMatrix}[1]{{}^{t}#1}
\newcommand{\DifferentialSystem}{\Sigma}
\newcommand{\TheEquation}[1]{f_{#1}}
\newcommand{\EquationSet}{F}
\newcommand{\DifferentialSystemOrder}[1]%
{%
\ifx#1\empty%
{e}%
\else%
{e_{#1}}%
\fi%
}
\newcommand{\EquationSetCardinal}{n}
\newcommand{\DifferentialSystemSize}{\EquationSetCardinal}
\newcommand{\DifferentiationIndex}{\sigma}
\newcommand{\AnalyticSolutionSet}{\varphi}
\newcommand{\AnalyticSolution}[1]{{\varphi_{#1}}}
\newcommand{\SolutionSpace}{\Variety{S}}
\newcommand{\Output}[1]{\underline{#1}}
\newcommand{\AuxiliaryDifferentialSystemOrder}[1]%
{%
\ifx#1\empty%
{e}%
\else%
{e_{#1}}%
\fi%
}
\newcommand{\FirstOrderDifferentialSystem}{\Gamma}
\newcommand{\StateVariable}[1]{{x_{#1}}}
\newcommand{\StateVariableSet}{X}
\newcommand{\StateVariableSubset}{Y}
\newcommand{\StateVariableSubsetElement}[1]{{y_{#1}}}
\newcommand{\StateVariableSetCardinal}{n}
\newcommand{\AlgebraicIndeterminate}[1]{{y_{#1}}}
\newcommand{\AlgebraicIndeterminatesSet}{Y}
\newcommand{\AlgebraicIndeterminateNumber}{m}
\newcommand{\DifferentialIndeterminate}[1]{{\StateVariable{#1}}}
\newcommand{\DifferentialIndeterminatesSet}{\StateVariableSet}
\newcommand{\DifferentialIndeterminateNumber}{\StateVariableSetCardinal}
\newcommand{\NewVariableSet}{Z}
\newcommand{\NewVariable}[1]{{z_{#1}}}
\newcommand{\DifferentialPolynomial}[1]{{q_{#1}}}
\newcommand{\DifferentialPolynomialSet}{Q}
\newcommand{\DifferentialPolynomialNumber}{\nu}
\newcommand{\DifferentialPolynomialRing}[2]{#1\lbrace #2 \rbrace}
\newcommand{\DifferentialIdeal}[1]
{%
\ifx#1\empty%
{[\textrm{Id}]}%
\else%
{[#1]}%
\fi%
}
\newcommand{\DifferentialIdealOrder}[1]{{\ord\DifferentialIdeal{#1}}}
\newcommand{\Polynomial}[1]{{p_{#1}}}
\newcommand{\PolynomialNumber}{\rho}
\newcommand{\LeadingCoefficient}[1]{\mathrm{lc}#1}
\newcommand{\PolynomialRing}[2]{#1[#2]}
\newcommand{\Ideal}[1]%
{%
\ifx#1\empty%
I
\else%
{\mathrm{Id}\!\left(#1\right)}%
\fi%
}
\newcommand{\PrimeIdeal}[1]%
{%
\ifx#1\empty%
{\mathfrak{p}}%
\else%
{\mathrm{Ip}\!\left(#1\right)}%
\fi%
}
\newcommand{\AssociatedPrime}[1]{{\Ass{#1}}}
\newcommand{\IdealSet}{\mathtt{A}}
\newcommand{\ProlongatedRing}[1]{\mathsf{R}^{(#1)}}
\newcommand{\ProlongatedIdeal}[1]{\textsf{pr}^{(#1)}\EquationSet}
\newcommand{\PartiallyProlongatedIdeal}[1]{\mathsf{I}_{#1}}
\newcommand{\AffineSpace}[1]{\Field{A}^{#1}}
\newcommand{\Variety}[1]{\mathcal{#1}}
\newcommand{\AffinePoint}[1]{\Variety{#1}}
\newcommand{\ZariskiOpenSet}{\Variety{O}}
\newcommand{\AuxiliaryZariskiOpenSet}{\Variety{Q}}
\newcommand{\Time}{t}
\newcommand{\OpenInterval}[2]{(#1,#2)}
\newcommand{\fluxion}[2]%
{%
\ifthenelse{\equal{#2}{1}}%
{\dot{#1}}%
{{{#1}^{(#2)}}}%
}
\newcommand{\leibnizoperator}%
{\frac{\textup{d}\hfill}{\textup{d}\Time}}%
\newcommand{\tleibnizoperator}%
{\tfrac{\textup{d}\hfill}{\textup{d}\Time}}%
\newcommand{\leibniznotation}[2]%
{%
\ifthenelse{\equal{#2}{1}}%
{\frac{{\textup{d}{#1}\hfill}}{{\textup{d}\Time}\hfill}}%
{\frac{\textup{d}^{#2}{#1}}{\textup{d}\Time^{#2}}}%
}
\newcommand{\lineleibniznotation}[2]%
{%
\ifthenelse{\equal{#2}{1}}%
{{\textup{d}{#1}}/{\textup{d}\Time}}%
{{\textup{d}^{#2}{#1}}/{\textup{d}\Time^{#2}}}%
}
\newif\ifUseLeibnizNotation
\newcommand{\timederivative}[2]%
{%
\ifUseLeibnizNotation%
\leibniznotation{#1}{#2}%
\else%
\fluxion{#1}{#2}%
\fi%
}%
\newcommand{\ltimederivative}[2]%
{%
\ifUseLeibnizNotation%
\lineleibniznotation{#1}{#2}%
\else%
\fluxion{#1}{#2}%
\fi%
}%
\newcommand{\alltimederivative}[2]{{#1}^{[{#2}]}}
\newcommand{\BaseFieldDerivation}{\delta}
\newcommand{\NaturalNumbers}{\mathbb{N}_{0}}
\newcommand{\NaturalNumbersWithoutZero}{\mathbb{N}}
\newcommand{\Field}[1]{\mathbb{#1}}
\newcommand{\BaseField}{\Field{K}}
\newcommand{\AlgebraicClosure}[1]{\overline{#1}}
\newcommand{\TranscendenceBasis}{U}
\newcommand{\TranscendenceBasisElement}[1]{{u_{#1}}}
\newcommand{\PrimitiveElement}{\upsilon}
\newcommand{\PrimitiveElementMinimalPolynomial}{q}
\newcommand{\GeometricResolutionFiber}[1]{{p_{#1}}}
\newcommand{\LinearAlgebraConstant}{\omega}
\newcommand{\EvaluationComplexity}{\mathbf{L}}
\newcommand{\SpecializationVariableSet}{W}
\newcommand{\SpecializationVariableSetCardinal}{s}
\newcommand{\SpecializationPoint}{\AffinePoint{\SpecializationVariableSet}}
\newcommand{\SpecializationOf}[2]{{{#1}\!\left|_{#2}\right.}}
\newcommand{\SpecializedProlongatedIdeal}{\SpecializationOf{\ProlongatedIdeal{\DifferentiationIndex+1}}{\SpecializationPoint}}
\newcommand{\StraightLineProgramLenght}{L}
\newcommand\PartialDerivative[3]
\newcommand\tPartialDerivative[3]
\definecolor{FOdarkblue}{rgb}{0.,0.,0.8}
\definecolor{FOdarkred}{rgb}{0.7,0.,0.}
\def\ord{{\rm ord}}
\def\DifferentialAlgebraicSystem{DAE}
\def\rank{\mathrm{rank}}
\title{A Geometric Index Reduction Method for Implicit Systems of
  Differential Algebraic Equations}
\author{L.\ D'Alfonso$^{(\text{a})}$\thanks{Partially supported by UBACyT X211
    (2008-2010) and ANPCyT PICT2007-816.},
  G.\ Jeronimo$^{(\text{b})*}$\thanks{Partially supported by CONICET
    PIP 5852/05, UBACyT X847 (2006-2009).},
  F.\ Ollivier$^{(\text{c})}$, A.\ Sedoglavic$^{(\text{d})}$,
  P.\ Solern\'o$^{(\text{b})*}$
  \\[0.3cm]
  {\text{\normalsize (a) Departamento de Ciencias Exactas, Ciclo B\'asico Com\'un,}}\\
  {\text{\normalsize Universidad de Buenos Aires, Ciudad
      Universitaria, 1428
      Buenos Aires, Argentina}}\\[0.3cm]
  {\text{\normalsize (b) Departamento de Matem\'atica, Facultad de
      Ciencias
      Exactas y Naturales}}\\
  {\text{\normalsize Universidad de Buenos Aires, Ciudad
      Universitaria, 1428 Buenos Aires, Argentina}}\\{\normalsize and
    CONICET, \text{Argentina}}
  \\[0.2cm]
  {\text{\normalsize (c) LIX, UMR CNRS -- \'Ecole
      polytechnique {n$^{o}$~7161},
      F-91128 Palaiseau, France}}\\[0.3cm]
  {\text{\normalsize (d) LIFL,}}
  {\text{\normalsize UMR CNRS -- Universit\'e de Lille I {n$^{o}$~8022}, F-59655 Villeneuve d'Ascq, France}}\\[0.3cm]
  {\normalsize E-mails: lisi@cbc.uba.ar, jeronimo@dm.uba.ar, francois.ollivier@lix.polytechnique.fr,}\\
  {\normalsize Alexandre.Sedoglavic@univ-lille1.fr,
    psolerno@dm.uba.ar}}
\begin{document}
\maketitle
\begin{abstract} This paper deals with the index reduction problem for the class of quasi-regular \DifferentialAlgebraicSystem\
systems. It is shown that any of these systems
can be transformed to a generically equivalent first order \DifferentialAlgebraicSystem\
system  consisting of a single purely algebraic
(polynomial) equation plus an under-determined ODE (that is, a semi-explicit \DifferentialAlgebraicSystem\ system of differentiation
index~$1$) in as many variables as the order of the input system. This can be done by means of a Kronecker-type algorithm with bounded complexity.
\end{abstract}

\section{Introduction}
In this paper, we consider implicit, ordinary differential algebraic
equation (\DifferentialAlgebraicSystem ) systems
\begin{equation}
  \label{sistema}
  \DifferentialSystem:= \left\{
    \begin{array}
      [c]{ccl}%
      \TheEquation{1}\Bigl(
      \StateVariableSet, \timederivative{\StateVariableSet}{1},\ldots ,
      \timederivative{\StateVariableSet}{\DifferentialSystemOrder{1}}
      \Bigr) &=&0,\\
      & \vdots & \\
      \TheEquation{\EquationSetCardinal}\Bigl(
      \StateVariableSet, \timederivative{\StateVariableSet}{1},\ldots,
      \timederivative{\StateVariableSet}{\DifferentialSystemOrder{\EquationSetCardinal}}
      \Bigr) &=&0,  \\
    \end{array}
  \right.
\end{equation}
where, for any integer~$\indexa$ s.t.~${1\le \indexa \le
  \StateVariableSetCardinal}$, $\TheEquation{\indexa}$ is a polynomial
in the variables~${\StateVariableSet:=\{\StateVariable{1},\ldots,
  \StateVariable{\StateVariableSetCardinal}\}}$ and in
their~$\indexb$th (${1\le \indexb\le
  \DifferentialSystemOrder{\indexa}}$) time
derivatives~${\timederivative{\StateVariableSet}{\indexb} := \left\{
    \timederivative{\StateVariable{1}}{\indexb},
    \ldots,\timederivative{\StateVariable{\StateVariableSetCardinal}}{\indexb}\right\}}$,
with coefficients in a differential field~$\BaseField$ of
characteristic~$0$.
\par
One of the main invariants of \DifferentialAlgebraicSystem\
systems is their \emph{differentiation index}. There are several
definitions of differentiation indices not all completely equivalent
(see for instance~\cite{brenan:1996, campbell:1995, fliess:1995,
  kunkel:2006, levey:1994, pantelides:1988, poulsen:2001,
  pritchard:2007, pryce:2001, rabier:1994, reid:2001, seiler:1999}), but in every
case it represents a measure of the implicitness of the given system
in a fixed coordinates set. For instance, for first order equations,
differentiation indices provide bounds for the number of total
derivatives of the system needed in order to obtain in the same
set of coordinates an explicit \textsc{ode} system which is verified by
all the solutions of the original system (see~\cite[Definition
2.2.2]{brenan:1996}).
\par
Since explicitness is strongly related to the existence of classical
solutions, a differentiation index should also bound the number of
derivatives needed in order to obtain existence and uniqueness
theorems (see~\cite{pritchard:2003, pritchard:2007, rabier:1994}).
{}From the point of view of numerical resolution, it is desirable for
the \DifferentialAlgebraicSystem\ to have an index as small as
possible. As shown in~\cite[\S 2.5.3]{brenan:1996}, for first order
systems a reduction of the index can be achieved by differentiating
the algebraic constraints, but the numerical solution of the resulting system do not satisfy necessarily the original equations.
\paragraph{Main contributions.}
In this article we address the index reduction problem for an 
ubiquitous class of quasi-regular \DifferentialAlgebraicSystem\
systems (see Section~\ref{sec:quasi-regular}). We show that any of these systems
is generically equivalent to a related (in a non intrinsic way) first
order \DifferentialAlgebraicSystem\
system~$\Output{\DifferentialSystem}$ with a particular
structure. This new system consists of a single purely algebraic
(polynomial) equation plus an under-determined \textsc{ode} (see
Definition~\ref{sigma1} page~\pageref{sigma1}).
Indeed,~$\Output{\DifferentialSystem}$ is a \emph{semi-explicit
  \DifferentialAlgebraicSystem\ system} in the usual sense (see for
instance~\cite[Section 1.2]{brenan:1996}) with differentiation
index~$1$ (see Proposition~\ref{index1} page~\pageref{index1}). It is
a well-known fact that this class of systems can be handled
successfully by means of numerical methods (see~\cite{petzold:1986,
  lotstedt:1986, brenan:1983}).
\par
The index reduction problem has already been considered in several
previous articles (see for instance~\cite{gear:1988, gear:1989,
  kunkel:2006, mattsson:1993}). The techniques applied in these
works are based on the computation of sufficiently many successive
derivatives of the original equations combined with rewriting
procedures relying on the Implicit Function Theorem, elimination of
critical equations, introduction of dummy derivatives, etc.
\par
Our approach also makes use of the computation of successive
derivatives, as many as the index, but, unlike the methods mentioned above,
we deal with the system of all these new equations in a purely
algebraic way. This new system defines an algebraic variety in a
suitable jet space and we parametrize this variety by means of the
points of a hypersurface.  This construction, originally introduced by
Kronecker, is known as a \emph{geometric resolution}
(see~\cite{giusti:2001,schost:2003,DuLe:2006:cpkpsss} and references therein). In order
to keep track of the differential structure, we use the
parametrizations to construct a vector field over the hypersurface
defining the semi-explicit \DifferentialAlgebraicSystem\
system~$\Output{\DifferentialSystem}$. A result of the same flavor (i.e. an univariate differential equation plus parameterizations of the variables) may be given by means of the notion of primitive element of an extension of differential fields. This construction, due to J. Ritt (\cite{ritt:1932}, see also \cite{seidenberg:1952}), is known as a \emph{resolvent representation} of the system $\Sigma$ (see \cite{CluzeauH:2003,CluzeauH:2008,dalfonso:2006} for effective versions of it).
\par
With respect to the known index reduction approaches, our method is symbolic and, in some sense, automatic: it
does not make use of the Implicit Function Theorem as it is the case
in~\cite{gear:1988} or~\cite{gear:1989} and it does not rely on any
smart choice of ad-hoc equations as in~\cite{kunkel:2006}. Moreover,
the construction can be done algorithmically within an admissible
complexity by applying well-known techniques from computer algebra
(see~\cite{schost:2003,lecerf:2003}).
\par
The number of variables of our semi-explicit system is the order of
the \emph{differential ideal} associated to the input system plus one
and it is always lower than those involved in the index reduction
methods of previous papers. In the first order case, where it is easy
to compare, this number is at most~${\StateVariableSetCardinal+1}$ and
in the general case, it is bounded by the Jacobi number of the system
(see~\cite{jacobi:1865, kondratieva:1982, ollivier:2007}
and~\cite{dalfonso:2009}).
\par
A further advantage of our method is that it preserves the constraints
of the initial conditions of the original system and then we do
not need to compute constants of integration.
\paragraph{Outline.}
The paper is organized as follows: in Section~\ref{preliminaries} the
basic notions needed throughout the article are introduced. The core
of the paper is in Section~\ref{hypersurface} where the semi-explicit
system~$\Output{\DifferentialSystem}$ is constructed. In
Section~\ref{passing} we study the relation between the solutions of
both systems. Finally, two appendices are included: the first one
contains some Bertini type results from commutative algebra we need and the second
one is devoted to existence and uniqueness theorems for
\DifferentialAlgebraicSystem\ systems.
\section{Preliminaries}
\label{preliminaries}
In this section, we introduce some notations used throughout this paper
and we recall some basic definitions from elementary (differential)
algebraic geometry for the reader convenience. Furthermore, we discuss the assumptions on the systems considered and some results concerning the differentiation index and the order of these systems.

\subsection{Basic notions and notations}
\label{basic}
Let~$\BaseField$ be a characteristic zero field equipped with a
derivation~$\BaseFieldDerivation$.  For
instance~${\BaseField=\Field{Q},\Field{R}}$ or~$\Field{C}$
with~${\BaseFieldDerivation=0}$, or~${\BaseField=\Field{Q}(t)}$ with
the usual derivation~${\BaseFieldDerivation(t)=1}$, etc.
\par
As in the Introduction, for any set
of $\DifferentialIndeterminatesSet:={\left\{ \DifferentialIndeterminate{1},\ldots
    ,\DifferentialIndeterminate{\DifferentialIndeterminateNumber}\right\}}$ of $\DifferentialIndeterminateNumber$ (differential) indeterminates
over~$\BaseField$, we
denote
by~$\timederivative{\DifferentialIndeterminate{\indexa}}{\indexb}$
the~$\indexb$-th successive formal derivative of the
variable~$\DifferentialIndeterminate{\indexa}$ (following Newton's
notation, its first derivative is also denoted
by~$\timederivative{\DifferentialIndeterminate{\indexa}}{1}$) and we
use the following notations:
\[ {\timederivative{\DifferentialIndeterminatesSet}{\indexb}
  :=\left\{\timederivative{\DifferentialIndeterminate{1}}{\indexb},\ldots,
    \timederivative{\DifferentialIndeterminate{\DifferentialIndeterminateNumber}}{\indexb}
  \right\}}\qquad
\textrm{and}\qquad{\alltimederivative{\DifferentialIndeterminatesSet}{\indexb}
  :=\left\{ \DifferentialIndeterminatesSet,
    \timederivative{\DifferentialIndeterminatesSet}{1},
    \timederivative{\DifferentialIndeterminatesSet}{2},\ldots,
    \timederivative{\DifferentialIndeterminatesSet}{\indexb}
  \right\}}.
\]
The derivation~$\BaseFieldDerivation$ can be extended to a derivation
in the polynomial
ring~${\BaseField\!\left[\timederivative{\DifferentialIndeterminatesSet}{\indexb},\,
    \indexb \in \NaturalNumbers\right]}$ as follows: for any
differential polynomial~$\DifferentialPolynomial{\empty}$
in~${\BaseField\!\left[\timederivative{\DifferentialIndeterminatesSet}{\indexb},
    \, \indexb\in \NaturalNumbers\right]}$ the following classical
recursive relations hold for the successive total derivatives
of~$\DifferentialPolynomial{\empty}$:
\[
\timederivative{\DifferentialPolynomial{\empty}}{0}:=
\DifferentialPolynomial{\empty}, 
\qquad \timederivative{\DifferentialPolynomial{\empty}}{\indexb}:=
\BaseFieldDerivation\!\left \lvert
  \timederivative{\DifferentialPolynomial{\empty}}{\indexb-1}\right
\rvert +\displaystyle{%
  \sum_{\indexc\in\NaturalNumbers} \sum_{1\le \indexa\le
    \DifferentialIndeterminateNumber} \dfrac{\partial
    \timederivative{\DifferentialPolynomial{\empty}}{\indexb-1}}
  {\partial\timederivative{\DifferentialIndeterminate{\indexa}}{\indexc}}
  \timederivative{\DifferentialIndeterminate{\indexa}}{\indexc+1} },
\qquad \textrm{ for } \indexb\ge 1,
\]
where~$\BaseFieldDerivation\bigl
\lvert\timederivative{\DifferentialPolynomial{\empty}}{\indexb-1}\bigr
\rvert$ denotes the polynomial obtained
from~$\timederivative{\DifferentialPolynomial{\empty}}{\indexb-1}$ by
applying the derivative~$\BaseFieldDerivation$ to all its
coefficients.  The (non-Noetherian) polynomial
ring~${\BaseField\!\left[\timederivative{\DifferentialIndeterminatesSet}{\indexb},\
    \indexb \in \NaturalNumbers\right]}$ with this derivation is denoted
by~$\DifferentialPolynomialRing{\BaseField}{\DifferentialIndeterminate{1},\ldots
  ,\DifferentialIndeterminate{\DifferentialIndeterminateNumber}}$ (or
simply~$\DifferentialPolynomialRing{\BaseField}{\DifferentialIndeterminatesSet}$)
and is called the ring of \emph{differential polynomials}.
\par
Given a finite set of (differential)
polynomials~${\DifferentialPolynomialSet:=
  \{\DifferentialPolynomial{1}, \ldots,
  \DifferentialPolynomial{\DifferentialPolynomialNumber}}\}$
in~$\DifferentialPolynomialRing{\BaseField}{\DifferentialIndeterminatesSet}$,
we write~$\DifferentialIdeal{\DifferentialPolynomialSet}$ to denote
the smallest ideal of~$\BaseField\{\DifferentialIndeterminatesSet\}$
stable under differentiation, i.e.\ the smallest ideal
containing~${\DifferentialPolynomial{1},\ldots,
  \DifferentialPolynomial{\DifferentialPolynomialNumber}}$ and all
their derivatives of arbitrary order. The
ideal~$\DifferentialIdeal{\DifferentialPolynomialSet}$ is called the
\emph{differential ideal} generated
by~$\DifferentialPolynomialSet$. Furthermore, for every
integer~$\indexb$, we extend our previous notations as follows~:
\[ {\timederivative{\DifferentialPolynomialSet}{\indexb}:=\left\{
    \timederivative{\DifferentialPolynomial{1}}{\indexb},\ldots,
    \timederivative{\DifferentialPolynomial{\DifferentialPolynomialNumber}}{\indexb}
  \right\}}
\qquad\textrm{and}\qquad
{\alltimederivative{\DifferentialPolynomialSet}{\indexb}:=\left\{\DifferentialPolynomialSet,
    \timederivative{\DifferentialPolynomialSet}{1},
    \timederivative{\DifferentialPolynomialSet}{2}, \ldots ,
    \timederivative{\DifferentialPolynomialSet}{\indexb}\right\}\!.}
\]
Let us introduce also some notions concerning elementary algebraic
geometry.
\par
Let~${\AlgebraicIndeterminatesSet:=\left\lbrace
    \AlgebraicIndeterminate{1}, \ldots,
    \AlgebraicIndeterminate{\AlgebraicIndeterminateNumber}\right\rbrace}$
be (algebraic) indeterminates over the field~$\BaseField$; we
write~$\PolynomialRing{\BaseField}{\AlgebraicIndeterminatesSet}$ to
denote the polynomial ring in~$\AlgebraicIndeterminateNumber$
variables over~$\BaseField$. Let~$\AlgebraicClosure{\BaseField}$ be a
fixed algebraic closure of~$\BaseField$.  Given some
polynomials~${\Polynomial{1}, \ldots, \Polynomial{\PolynomialNumber}}$
in~$\PolynomialRing{\BaseField}{\AlgebraicIndeterminatesSet}$, the
set~${\bigl\{ \AffinePoint{\AlgebraicIndeterminatesSet} \in
  {\AlgebraicClosure{\BaseField}}^{\AlgebraicIndeterminateNumber},\
  \Polynomial{1}(\AffinePoint{\AlgebraicIndeterminatesSet} )=\cdots=
  \Polynomial{\PolynomialNumber}(\AffinePoint{\AlgebraicIndeterminatesSet}
  )=0 \bigr\}}$ is called an \textit{algebraic variety definable
  over~$\BaseField$} (or simply a \textit{variety} if~$\BaseField$ is
clear from the context). The affine
space~$\AlgebraicClosure{\BaseField}^{\AlgebraicIndeterminateNumber}$
is endowed with a topology (the so-called \textit{Zariski topology})
where the closed sets are exactly the algebraic varieties definable
over~$\BaseField$. We denote this topological space
by~$\AffineSpace{\AlgebraicIndeterminateNumber}$. The
space~$\AffineSpace{\AlgebraicIndeterminateNumber}$ is a Noetherian
space and then every closed set is an irredundant union of a finite
number of irreducible closed sets.
\par
Given a subvariety~$\Variety{V}$
of~$\AffineSpace{\AlgebraicIndeterminateNumber}$ we denote
by~$I({\Variety{V}})$ the ideal
in~$\PolynomialRing{\BaseField}{\AlgebraicIndeterminatesSet}$ of all
the polynomials that vanish on~$\Variety{V}$
and by~$\PolynomialRing{\BaseField}{\Variety{V}}:={\PolynomialRing{\BaseField}{\AlgebraicIndeterminatesSet}/I({\Variety{V}})}$
the \textit{coordinate ring} of~$\Variety{V}$.
\subsection{The considered system --- Primality assumption}
\label{sistema_original}
In this section we recall some notations concerning the
\DifferentialAlgebraicSystem\ systems considered in this paper. Then,
we explicit a natural primality assumption necessary in the sequel.
\par
Let~$\EquationSetCardinal$ denote a fixed non-negative integer.
Throughout the paper we consider \DifferentialAlgebraicSystem\ systems
of the following type:
\[
\DifferentialSystem:= \left\{
    \begin{array}
      [c]{ccl}%
      \TheEquation{1}\Bigl(
      \StateVariableSet, \timederivative{\StateVariableSet}{1},\ldots ,
      \timederivative{\StateVariableSet}{\DifferentialSystemOrder{1}}
      \Bigr) &=&0,\\
      & \vdots & \\
      \TheEquation{\EquationSetCardinal}\Bigl(
      \StateVariableSet, \timederivative{\StateVariableSet}{1},\ldots,
      \timederivative{\StateVariableSet}{\DifferentialSystemOrder{\EquationSetCardinal}}
      \Bigr) &=&0,  \\
    \end{array}
  \right.
\]
where, for every~${1\le \indexa \le \EquationSetCardinal}$,
$\TheEquation{\indexa}$ is a polynomial in the
variables~$\StateVariableSet$ and the
derivatives~${\timederivative{\StateVariableSet}{\indexb}}$,
with~${1\le \indexb \le \DifferentialSystemOrder{\indexa}}$; the
coefficients of these polynomials are in the field~$\BaseField$. Each
non-negative integer~$\DifferentialSystemOrder{\indexa}$ denotes the
maximal derivation order appearing in the
polynomial~$\TheEquation{\indexa}$.  We
write~${\DifferentialSystemOrder{\empty}:=\max\{\DifferentialSystemOrder{\indexa}\}}$
for the maximal derivation order that occurs in~$\DifferentialSystem$
and we assume that~${\DifferentialSystemOrder{\empty}}$ is greater or
equal to~$1$.  As done previously, we use the following notations:
\[
\EquationSet:=\{\TheEquation{1},\ldots,\TheEquation{\EquationSetCardinal}\},\quad
{\timederivative{\EquationSet}{\indexb}:=\left\lbrace\timederivative{\TheEquation{1}}{\indexb},\ldots,\timederivative{\TheEquation{n}}{\indexb}\right\rbrace}
\quad\textrm{and}\quad
{\alltimederivative{\EquationSet}{\indexb}:=\left\lbrace \EquationSet,
    \timederivative{\EquationSet}{1},
    \timederivative{\EquationSet}{2},\ldots
    ,\timederivative{\EquationSet}{\indexb}\right\rbrace}.
\]
Let~$\DifferentialIdeal{\EquationSet}\subset \DifferentialPolynomialRing{\BaseField}{\StateVariableSet}$ be the
differential ideal generated by the polynomials~$\EquationSet$. We
introduce also the following auxiliary (Noetherian) polynomial rings
and ideals: for every~$\indexb$
in~$\NaturalNumbers$,~$\ProlongatedRing{\indexb}$ denotes the
polynomial
ring~${\BaseField\!\left[\alltimederivative{\StateVariableSet}{\indexb}\right]}$
and~$\ProlongatedIdeal{\indexb}$ the ideal
in~$\ProlongatedRing{\indexb-1+\DifferentialSystemOrder{\empty}}$
generated by the total derivatives of the defining equations up to
order~${\indexb-1}$,
namely~${\ProlongatedIdeal{\indexb}:=(\alltimederivative{\EquationSet}{\indexb-1})}$
(this ideal is usually known as the \emph{${(\indexb-1)}$th
  prolongation ideal}). We set~${\ProlongatedIdeal{0}:=(0)}$ by
definition.
\par
For~${\indexa=0,\ldots,\EquationSetCardinal}$
in~$\NaturalNumbersWithoutZero$ and for every integer~$\indexb$, we
will assume that the ideals generated by the
polynomials~${\alltimederivative{\EquationSet}{\indexb-1}
  ,\timederivative{\TheEquation{1}}{\indexb},\ldots
  ,\timederivative{\TheEquation{\indexa}}{\indexb}}$ \emph{are all
  prime ideals} in their respective rings. In particular, the
differential ideal~$\DifferentialIdeal{\EquationSet}$ is a prime
differential ideal in the
ring~$\DifferentialPolynomialRing{\BaseField}{\StateVariableSet}$.
\subsection{Quasi-regular DAE systems and prime complete intersection}
\label{sec:quasi-regular}
In this section, we establish a relationship between the notion of
\emph{quasi-regularity} of a differential system and an algebraic
property---complete intersection---that is required by the geometric
elimination algorithm used in this paper.  The notion of
\emph{quasi-regularity} appears implicitly in~\cite{johnson:1978} in
order to generalize a conjecture of Janet to non-linear systems.
\begin{definition} \label{quasir} \label{quasirgen}
  Let~$\Gamma$ be a
  \DifferentialAlgebraicSystem\ system given in the
  ring~${\DifferentialPolynomialRing{\BaseField}{\DifferentialIndeterminatesSet}}$
  by differential
  polynomials~${\DifferentialPolynomialSet:=\{\DifferentialPolynomial{1},\ldots,\DifferentialPolynomial{\DifferentialPolynomialNumber}\}}$
   of order bounded by a nonegative
  integer~$\AuxiliaryDifferentialSystemOrder{\empty}$. Let~$\mathfrak{p}$
  be a prime differential ideal
  containing~$\DifferentialIdeal{\DifferentialPolynomialSet}$. We say
  that~$\Gamma$ is \emph{quasi-regular
    at~$\mathfrak{p}$} if for every integer~$\indexb$
  in~$\NaturalNumbers$, the Jacobian matrix of the
  polynomials~${\alltimederivative{\DifferentialPolynomialSet}{\indexb}}$
  with respect to the set of
  variables~$\alltimederivative{\DifferentialIndeterminatesSet}{\indexb+\AuxiliaryDifferentialSystemOrder{\empty}}$
  has full row rank over the
  domain~${\ProlongatedRing{\indexb+\DifferentialSystemOrder{\empty}}/(\ProlongatedRing{{\indexb+\DifferentialSystemOrder{\empty}}}\cap\,
    \mathfrak{p})}$. We say
  that~$\Gamma$ is \emph{quasi-regular} if it is
  quasi-regular at any \emph{minimal} prime differential
  ideal
  containing~$\DifferentialIdeal{\DifferentialPolynomialSet}$.
\end{definition}
For the systems~$\DifferentialSystem$ considered in this paper, since the
ideal~$\DifferentialIdeal{\EquationSet}$ is assumed to be prime, the
quasi-regularity of~$\DifferentialSystem$ is equivalent to say that
for each integer~$\indexb$, the Jacobian matrix of the
polynomials~$\alltimederivative{\EquationSet}{\indexb}$ with respect
to the set of
variables~$\alltimederivative{\StateVariableSet}{\indexb+\DifferentialSystemOrder{\empty}}$
has full row rank over the
domain~${\ProlongatedRing{\indexb+\DifferentialSystemOrder{\empty}}/(\ProlongatedRing{\indexb+\DifferentialSystemOrder{\empty}}\cap\,
  \DifferentialIdeal{\EquationSet})}$.  This condition can be easily
rephrased in terms of K\"ahler differentials (as in Johnson's original
work~\cite{johnson:1978}) saying that the set of
differentials~${\bigl\{ \textrm{d}
  \timederivative{\TheEquation{\indexa}}{\indexb},\ 1\le \indexa \le
  \EquationSetCardinal,\ \indexb \in \NaturalNumbers\bigr\}}$ are
a~${\DifferentialPolynomialRing{\BaseField}{\StateVariableSet}/\DifferentialIdeal{\EquationSet}}$-linearly
independent set in the module of
differentials~${\Omega_{\DifferentialPolynomialRing{\BaseField}{\StateVariableSet}/\BaseField}\otimes_{\DifferentialPolynomialRing{\BaseField}{\StateVariableSet}}
  \DifferentialPolynomialRing{\BaseField}{\StateVariableSet}/\DifferentialIdeal{\EquationSet}}$.
Geometrically, it means that
the algebraic variety defined by the ideal generated by
the~${(\indexb+1)\EquationSetCardinal}$
polynomials~$\alltimederivative{\EquationSet}{\indexb}$ in
the~${(\indexb+\DifferentialSystemOrder{\empty}+1)\StateVariableSetCardinal}$-variate
polynomial
ring~$\ProlongatedRing{\indexb+\DifferentialSystemOrder{\empty}}$
\emph{is smooth at almost every point} of the closed subvariety
defined by the prime
ideal~${\ProlongatedRing{\indexb+\DifferentialSystemOrder{\empty}}\cap
  \DifferentialIdeal{\EquationSet}}$.
\par
Under our assumptions we have the following straightforward
consequence:
\begin{proposition}
  \label{prime}
  If the system~$\DifferentialSystem$ is quasi-regular, then
  for~${\indexa=0,\dots, \EquationSetCardinal}$ and every~$\indexb$
  in~$\NaturalNumbers$, the
  polynomials~${\alltimederivative{\EquationSet}{\indexb-1}
    ,\timederivative{\TheEquation{1}}{\indexb},\ldots ,
    \timederivative{\TheEquation{\indexa}}{\indexb}}$ form a regular
  sequence in the
  ring~${\ProlongatedRing{\indexb+\DifferentialSystemOrder{\empty}}}$.
  In particular, the prolongation ideals~$\ProlongatedIdeal{\indexb}$
  are prime complete intersection ideals.
\end{proposition}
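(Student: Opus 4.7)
The plan is to translate quasi-regularity---a Jacobian rank statement modulo the large differential prime $\DifferentialIdeal{\EquationSet}$---into a height computation for each ideal
\[
\mathcal{J} := (\alltimederivative{\EquationSet}{\indexb-1},\, \timederivative{\TheEquation{1}}{\indexb},\ldots,\, \timederivative{\TheEquation{\indexa}}{\indexb}) \subset \ProlongatedRing{\indexb+\DifferentialSystemOrder{\empty}},
\]
which is prime by the standing hypothesis of Section~\ref{sistema_original} and has $\indexb\EquationSetCardinal+\indexa$ generators. Once the height of $\mathcal{J}$ is shown to equal this number of generators, the Cohen--Macaulay property of polynomial rings will immediately yield the regular sequence conclusion.

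First I would establish that the Jacobian matrix $M$ of the generators of $\mathcal{J}$ with respect to $\alltimederivative{\StateVariableSet}{\indexb+\DifferentialSystemOrder{\empty}}$ has full row rank modulo $\mathcal{J}$. By quasi-regularity, the larger Jacobian of the full family $\alltimederivative{\EquationSet}{\indexb}$ with respect to the same variables has full row rank modulo $\ProlongatedRing{\indexb+\DifferentialSystemOrder{\empty}}\cap \DifferentialIdeal{\EquationSet}$; restricting to the rows corresponding to the generators of $\mathcal{J}$ preserves full row rank for $M$ modulo this same prime. Since $\mathcal{J} \subseteq \ProlongatedRing{\indexb+\DifferentialSystemOrder{\empty}}\cap\DifferentialIdeal{\EquationSet}$, the monotonicity of matrix rank under reduction modulo a larger prime then gives full row rank of $M$ modulo the smaller prime $\mathcal{J}$ as well.

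Next I would invoke the Jacobian criterion over the characteristic-zero field $\BaseField$: generic smoothness ensures that the local ring of $V(\mathcal{J})$ at its generic point is regular, so the rank of $M$ modulo $\mathcal{J}$ equals $\mathrm{height}(\mathcal{J})$. Together with the previous paragraph this forces $\mathrm{height}(\mathcal{J}) = \indexb\EquationSetCardinal+\indexa$. Since $\ProlongatedRing{\indexb+\DifferentialSystemOrder{\empty}}$ is Cohen--Macaulay, an ideal whose height equals the cardinality of a generating set is necessarily generated by a regular sequence, proving the main assertion. Specializing to $\indexa = 0$ recovers the ``in particular'' statement about the prolongation ideals $\ProlongatedIdeal{\indexb}$.

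The main obstacle I anticipate is the first step: precisely tracking the inclusion $\mathcal{J}\subseteq \ProlongatedRing{\indexb+\DifferentialSystemOrder{\empty}}\cap\DifferentialIdeal{\EquationSet}$ and the direction of rank monotonicity when passing between these nested primes. The second and third steps are direct applications of standard commutative-algebra machinery and should go through without complication.
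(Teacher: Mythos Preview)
Your argument is correct and is essentially the paper's proof, repackaged. The paper localizes at $\mathsf{q}:=\ProlongatedRing{\indexb+\DifferentialSystemOrder{\empty}}\cap\DifferentialIdeal{\EquationSet}$, applies the Jacobian criterion there to obtain a \emph{local} regular sequence, and then uses primality of $\mathcal{J}$ together with $\mathcal{J}\subseteq\mathsf{q}$ to pass to a global regular sequence; you instead transfer the full-rank condition from $R/\mathsf{q}$ to $R/\mathcal{J}$ via the inclusion of primes, read off $\mathrm{height}(\mathcal{J})$ from the Jacobian rank at the generic point, and invoke Cohen--Macaulayness. Both routes use exactly the same three ingredients (Jacobian criterion, the inclusion $\mathcal{J}\subseteq\mathsf{q}$, primality of $\mathcal{J}$), and the step you flagged as the main obstacle is in fact routine: a nonvanishing maximal minor modulo $\mathsf{q}$ is a fortiori nonvanishing modulo the smaller prime $\mathcal{J}$.
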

\begin{proof}
  Let~$\PartiallyProlongatedIdeal{\indexb\indexa}$  be
  the ideal
  of~$\ProlongatedRing{\indexb+\DifferentialSystemOrder{\empty}}$ generated by~${\alltimederivative{\EquationSet}{\indexb-1}
    , \timederivative{\TheEquation{1}}{\indexb},\ldots ,
    \timederivative{\TheEquation{\indexa}}{\indexb}}$. From the
  Jacobian Criterion (see~\cite[\S30]{matsumura:1970}) we observe that
  the quasi-regularity condition implies that the
  ideal~$\PartiallyProlongatedIdeal{\indexb\indexa} $ is complete
  intersection and its generators form a regular sequence in the local
  ring~$\bigl(\ProlongatedRing{\indexb+\DifferentialSystemOrder{\empty}}\bigr)_\mathsf{q}$,
  where~$\mathsf{q}$
  denotes~${\ProlongatedRing{\indexb+\DifferentialSystemOrder{\empty}}\cap
    \DifferentialIdeal{\EquationSet}}$.  Since the
  ideal~$\PartiallyProlongatedIdeal{\indexb\indexa} $ is assumed to be
  prime and it is contained in~$\mathsf{q}$, we conclude that the
  polynomials~${\alltimederivative{\EquationSet}{\indexb-1}
    ,\timederivative{\TheEquation{1}}{\indexb},\ldots ,
    \timederivative{\TheEquation{\indexa}}{\indexb}}$ form a
  \emph{global} regular sequence
  in~$\ProlongatedRing{\indexb+\DifferentialSystemOrder{\empty}}$.
\end{proof}

\subsection{Differentiation index --- Linearized
  standpoint} \label{defindex}
We introduce here the notion of differentiation index of quasi-regular DAE systems used in this
paper and establish a relationship between this index and the dimension
of Jacobian matrix kernels. We keep the hypotheses on the system $\DifferentialSystem$ made on Section \ref{sistema_original} and, from now on, we also assume that \emph{$\DifferentialSystem$ is quasi-regular}.

Consider the following chain
$\mathcal{C}$ of (prime) ideals in the polynomial
ring~${\ProlongatedRing{\DifferentialSystemOrder{\empty}-1}}$:
\[ {\mathcal{C}}:\quad 0=\ProlongatedIdeal{0} \cap
\ProlongatedRing{\DifferentialSystemOrder{\empty}-1}\subseteq
\ProlongatedIdeal{1} \cap
\ProlongatedRing{\DifferentialSystemOrder{\empty}-1}\subseteq \cdots
\subseteq \ProlongatedIdeal{\indexb} \cap
\ProlongatedRing{\DifferentialSystemOrder{\empty}-1}\subseteq \cdots
\subseteq {[F]\cap}\ProlongatedRing{\DifferentialSystemOrder{\empty}-1}.
\]
Since~$\ProlongatedRing{\DifferentialSystemOrder{\empty}-1}$ is a
Noetherian ring, the ideal chain~$\mathcal{C}$ eventually becomes
stationary. Clearly, the biggest proper ideal of the chain must
be~${\DifferentialIdeal{\EquationSet}\cap
  \ProlongatedRing{\DifferentialSystemOrder{\empty}-1}}$.

\begin{definition}
  The \emph{differentiation index}~$\DifferentiationIndex$ of the
  system~$\DifferentialSystem$ is the minimum integer~$\indexb$ at
  which the chain~$\mathcal{C}$ becomes stationary; more precisely,
  \[
  {\DifferentiationIndex:=\min \left\{\indexb\in \NaturalNumbers\mid
      \ProlongatedIdeal{\indexb} \cap
      \ProlongatedRing{\DifferentialSystemOrder{\empty}-1} =
      \ProlongatedIdeal{\indexb+\indexc}\cap
      \ProlongatedRing{\DifferentialSystemOrder{\empty}-1},\ \forall
      \indexc\in \NaturalNumbersWithoutZero\right\}}.
  \]
  Clearly we have~${\DifferentiationIndex=\min \{\indexb\in
    \NaturalNumbers\mid \ProlongatedIdeal{\indexb}\cap
    \ProlongatedRing{\DifferentialSystemOrder{\empty}-1}=\DifferentialIdeal{\EquationSet}
    \cap \ProlongatedRing{\DifferentialSystemOrder{\empty}-1}\}}$.
\end{definition}
The differentiation index can also be defined by means of Jacobian
matrices related to the input system.
\par
For any positive integers~$\indexb$ and~$\indexc$, with~${\indexc \ge
  \DifferentialSystemOrder{\empty}-1}$,
let~$\mathfrak{J}_{\indexb,\indexc}$ be the Jacobian matrix
of~${\timederivative{\EquationSet}{\indexc-\DifferentialSystemOrder{\empty}
    + 1}, \ldots ,
  \timederivative{\EquationSet}{\indexc-\DifferentialSystemOrder{\empty}
    + \indexb}}$ with respect to the
variables~${\timederivative{\StateVariableSet}{\indexc+1},\ldots
  ,\timederivative{\StateVariableSet}{\indexc+\indexb}}$. Let the
integer~$d_{\indexb,\indexc}$ denote~${\dim (\ker
  (\TransposedMatrix{\mathfrak{J}_{\indexb,\indexc}}))}$
(where~$\TransposedMatrix{\mathfrak{J}_{\indexb,\indexc}}$ denotes the
transposed matrix of~${\mathfrak{J}_{\indexb,\indexc}}$) and let~${d_{0,\indexc}:=0}$.  Under the additional hypothesis that the
rank of the
matrices~$\TransposedMatrix{\mathfrak{J}_{\indexb,\indexc}}$ is
independent of the ring where it is computed (the rank over the
rings~${\ProlongatedRing{\indexb+\indexc+\indexa}/\ProlongatedIdeal{\indexb+\indexc+\indexa-\DifferentialSystemOrder{\empty}+1}}$
or
over~${\ProlongatedRing{\indexc+\indexb+\indexa}/(\DifferentialIdeal{\EquationSet}\cap
  \ProlongatedRing{\indexc+\indexb+\indexa}})$ is the same for any
integer~$\indexa$), it can be shown that the double
sequence~$d_{\indexb,\indexc}$ is in fact independent of~$\indexc$
(see \cite[Proposition 11]{dalfonso:2008}). If we
write~${d_{\indexb}:=d_{\indexb,\indexc}}$, we have the following
alternative characterization of the differentiation index~(see
\cite[Theorem 8 and Definition 9]{dalfonso:2009}):
$${\DifferentiationIndex=\min\{\indexb\in \NaturalNumbers \mid
      d_{\indexb}=d_{\indexb+1}\}}.$$
{}From this characterization, we deduce the following result (see
\cite[Theorem 10]{dalfonso:2009}):
\begin{theorem}
\label{index}
The differentiation index~$\DifferentiationIndex$ satisfies:
\begin{eqnarray*}
\DifferentiationIndex&=& \min\{ \indexb\in \NaturalNumbers \mid
    \ProlongatedIdeal{\indexb+\indexc-\DifferentialSystemOrder{\empty}+1}
    \cap \ProlongatedRing{\indexc} = \ProlongatedIdeal{\indexb+\indexc-\DifferentialSystemOrder{\empty}+2}    
    \cap \ProlongatedRing{\indexc}\}\\
    &=& \min\{ \indexb\in \NaturalNumbers \mid
    \ProlongatedIdeal{\indexb+\indexc-\DifferentialSystemOrder{\empty}+1}
    \cap \ProlongatedRing{\indexc} =  \DifferentialIdeal{\EquationSet}
    \cap \ProlongatedRing{\indexc}\}
    \end{eqnarray*}
for every integer~${\indexc \ge {\DifferentialSystemOrder{\empty}-1}}$.
\end{theorem}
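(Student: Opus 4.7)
The plan is to reduce both displayed equalities to the characterization
$\DifferentiationIndex=\min\{\indexb\in\NaturalNumbers\mid d_\indexb=d_{\indexb+1}\}$
recalled immediately above the theorem. Note first that when
$\indexc=\DifferentialSystemOrder{\empty}-1$, the identity
$\ProlongatedIdeal{\indexb+\indexc-\DifferentialSystemOrder{\empty}+1}\cap
\ProlongatedRing{\indexc}=\ProlongatedIdeal{\indexb+\indexc-\DifferentialSystemOrder{\empty}+2}\cap
\ProlongatedRing{\indexc}$ is exactly the defining condition of $\DifferentiationIndex$,
so the substantive task is to show that this stabilization condition is independent
of $\indexc$: more precisely, that for every $\indexc\ge\DifferentialSystemOrder{\empty}-1$
it is equivalent to $d_\indexb=d_{\indexb+1}$. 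I will establish this equivalence
by computing the dimensions of the two irreducible subvarieties involved and
showing they agree if and only if $d_\indexb=d_{\indexb+1}$.

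The dimension computation proceeds via fibers of projections. By Proposition~\ref{prime},
$\ProlongatedIdeal{\indexb+\indexc-\DifferentialSystemOrder{\empty}+1}$ is a prime
complete intersection in $\ProlongatedRing{\indexb+\indexc}$, so the variety it
defines is irreducible of dimension
$(\indexb+\indexc+1)\StateVariableSetCardinal-(\indexb+\indexc-\DifferentialSystemOrder{\empty}+1)\EquationSetCardinal$.
Let $\pi$ denote the projection of this variety onto the $\alltimederivative{\StateVariableSet}{\indexc}$
coordinates; since $\ProlongatedIdeal{\indexb+\indexc-\DifferentialSystemOrder{\empty}+1}$
is prime, so is its contraction to $\ProlongatedRing{\indexc}$, and the closure of
$\pi$'s image is precisely the irreducible variety this contraction defines.
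Over a generic point of the image the fiber lies in the $\indexb\StateVariableSetCardinal$-dimensional
space of extra coordinates $\timederivative{\StateVariableSet}{\indexc+1},\dots,
\timederivative{\StateVariableSet}{\indexc+\indexb}$ and is cut out by the
$\indexb\EquationSetCardinal$ equations
$\timederivative{\EquationSet}{\indexc-\DifferentialSystemOrder{\empty}+1},\dots,
\timederivative{\EquationSet}{\indexc-\DifferentialSystemOrder{\empty}+\indexb}$,
whose Jacobian in those coordinates is exactly $\mathfrak{J}_{\indexb,\indexc}$.
Quasi-regularity together with the rank-independence hypothesis on
$\mathfrak{J}_{\indexb,\indexc}$ recalled above the theorem yield generic smoothness,
so the fiber has dimension
$\indexb\StateVariableSetCardinal-\rank(\mathfrak{J}_{\indexb,\indexc})=
\indexb(\StateVariableSetCardinal-\EquationSetCardinal)+d_\indexb$. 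Subtracting
from the ambient dimension gives
\[
\dim V\bigl(\ProlongatedIdeal{\indexb+\indexc-\DifferentialSystemOrder{\empty}+1}\cap
\ProlongatedRing{\indexc}\bigr)=
(\indexc+1)\StateVariableSetCardinal-(\indexc-\DifferentialSystemOrder{\empty}+1)\EquationSetCardinal
-d_\indexb,
\]
and the analogous formula with $\indexb+1$ replacing $\indexb$.

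The inclusion
$\ProlongatedIdeal{\indexb+\indexc-\DifferentialSystemOrder{\empty}+1}\cap
\ProlongatedRing{\indexc}\subseteq
\ProlongatedIdeal{\indexb+\indexc-\DifferentialSystemOrder{\empty}+2}\cap
\ProlongatedRing{\indexc}$ of prime ideals translates into a reverse inclusion of
irreducible varieties; these coincide if and only if their dimensions agree,
equivalently (by the formula above) if and only if $d_\indexb=d_{\indexb+1}$.
Combined with the characterization of $\DifferentiationIndex$ this yields the
first displayed equality. For the second, observe that the chain
$(\ProlongatedIdeal{k}\cap\ProlongatedRing{\indexc})_k$ is ascending with union
$\DifferentialIdeal{\EquationSet}\cap\ProlongatedRing{\indexc}$, and once the
condition $d_\indexb=d_{\indexb+1}$ holds it persists for every subsequent index
(a property implicit in the well-definedness of $\DifferentiationIndex$ via the
characterization), so the chain stabilizes at $\DifferentialIdeal{\EquationSet}\cap
\ProlongatedRing{\indexc}$. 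The main technical obstacle is the justification of
the generic fiber dimension formula: this is where the full strength of
quasi-regularity and the rank-independence assumption is used, ensuring that
the fibers of $\pi$ are generically smooth complete intersections of the
expected codimension.
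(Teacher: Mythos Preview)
Your argument is essentially the Jacobian/dimension-counting approach that the paper attributes to \cite{dalfonso:2009} (Theorem~10 there) but does not reproduce: you compute $\dim V\bigl(\ProlongatedIdeal{\indexb+\indexc-\DifferentialSystemOrder{\empty}+1}\cap\ProlongatedRing{\indexc}\bigr)$ via the fiber dimension of the projection, identify the vertical tangent space with $\ker\mathfrak{J}_{\indexb,\indexc}$, and conclude that the two prime contractions coincide iff $d_\indexb=d_{\indexb+1}$. This is correct; the only delicate point, which you flag, is that the rank of $\mathfrak{J}_{\indexb,\indexc}$ must be evaluated modulo the relevant prolongation ideal, and this is exactly where the rank-independence hypothesis stated just above the theorem is used.

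The paper itself takes a different route for the only case it treats explicitly, namely $\indexc=\DifferentialSystemOrder{\empty}$ (Lemma~\ref{stal-un-jour}): instead of dimension counting, it extracts from an algebraic characteristic set of $\ProlongatedIdeal{\indexb}\cap\ProlongatedRing{\DifferentialSystemOrder{\empty}}$ (for an orderly ranking) a minimal autoreduced chain $\mathcal{B}$, shows $\mathcal{B}$ is a differential characteristic set of some prime differential ideal $\mathcal{P}$, and concludes $\mathcal{P}=\DifferentialIdeal{\EquationSet}$ and hence $\ProlongatedIdeal{\indexb}\cap\ProlongatedRing{\DifferentialSystemOrder{\empty}}=\DifferentialIdeal{\EquationSet}\cap\ProlongatedRing{\DifferentialSystemOrder{\empty}}$. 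Your approach is more uniform (it handles all $\indexc\ge\DifferentialSystemOrder{\empty}-1$ at once) and stays within commutative algebra; the paper's characteristic-set argument is more constructive but is presented only for a single value of $\indexc$ and relies on the machinery of \cite{kolchin:1973,blop:2009}. One minor point: your claim that ``once $d_\indexb=d_{\indexb+1}$ holds it persists'' is indeed not entirely trivial and is part of what \cite[Theorem~8]{dalfonso:2009} establishes; it would be worth making that dependence explicit rather than calling it ``implicit in the well-definedness''.
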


The techniques used in \cite{dalfonso:2009} rely on the structure of the Jacobian matrices involved. Here we give an alternative proof of the above result for the case $\indexc=\DifferentialSystemOrder{\empty}$ based on the characteristic set theory (see \cite{kolchin:1973,mishra:1993}).

\begin{lemma}\label{stal-un-jour}
If, for some integer $\indexb$,
$\ProlongatedIdeal{\indexb}\cap\ProlongatedRing{\DifferentialSystemOrder{\empty}}=
\ProlongatedIdeal{\indexb+1}\cap\ProlongatedRing{\DifferentialSystemOrder{\empty}}$,
then $\ProlongatedIdeal{\indexb}\cap\ProlongatedRing{\DifferentialSystemOrder{\empty}}=
[\EquationSet]\cap\ProlongatedRing{\DifferentialSystemOrder{\empty}}$.
\end{lemma}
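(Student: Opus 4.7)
The plan is to prove by induction on $s\geq 0$ that $\ProlongatedIdeal{\indexb+s}\cap\ProlongatedRing{\DifferentialSystemOrder{\empty}}=\ProlongatedIdeal{\indexb}\cap\ProlongatedRing{\DifferentialSystemOrder{\empty}}$. Granted this, since $[\EquationSet]=\bigcup_{s\geq 0}\ProlongatedIdeal{s}$, any $P\in[\EquationSet]\cap\ProlongatedRing{\DifferentialSystemOrder{\empty}}$ belongs to some $\ProlongatedIdeal{\indexb+s}$ and hence to $\ProlongatedIdeal{\indexb}\cap\ProlongatedRing{\DifferentialSystemOrder{\empty}}$, which yields the lemma. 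The base cases $s=0,1$ are trivial or are exactly the hypothesis.

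For the induction I would invoke the characteristic set theory of the prime differential ideal $[\EquationSet]$, as hinted in the paragraph preceding the lemma. Fix a characteristic set $\{A_1,\ldots,A_r\}$ of $[\EquationSet]$ with respect to an orderly ranking. By the Ritt--Kolchin reduction theorem, every $Q\in[\EquationSet]$ with $\ord Q\leq \DifferentialSystemOrder{\empty}$ satisfies an identity
\[
H^{m}\,Q=\sum_{i,l}\lambda_{i,l}\,\timederivative{A_i}{l},
\]
where $H$ is a product of initials and separants of the $A_i$'s (so $H\notin[\EquationSet]$, hence $H\notin\ProlongatedIdeal{k}$ for every $k$) and, by orderliness of the ranking, all the derivatives $\timederivative{A_i}{l}$ appearing have order at most $\DifferentialSystemOrder{\empty}$. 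Since each $A_i$ lies in $[\EquationSet]=\bigcup_k\ProlongatedIdeal{k}$, one can fix an integer $N$ such that every relevant $\timederivative{A_i}{l}$ belongs to $\ProlongatedIdeal{N}$; hence $H^{m}Q\in\ProlongatedIdeal{N}$. The primality of $\ProlongatedIdeal{N}$ (Proposition~\ref{prime}) together with $H\notin\ProlongatedIdeal{N}$ forces $Q\in\ProlongatedIdeal{N}$, so that $[\EquationSet]\cap\ProlongatedRing{\DifferentialSystemOrder{\empty}}=\ProlongatedIdeal{N}\cap\ProlongatedRing{\DifferentialSystemOrder{\empty}}$ for some $N$ depending only on the characteristic set.

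It then remains to propagate the one-step hypothesis $\ProlongatedIdeal{\indexb}\cap\ProlongatedRing{\DifferentialSystemOrder{\empty}}=\ProlongatedIdeal{\indexb+1}\cap\ProlongatedRing{\DifferentialSystemOrder{\empty}}$ to a full stabilization up to level $N$. I plan to do this through a \emph{shift principle}: if $\ProlongatedIdeal{a}\cap\ProlongatedRing{b}=\ProlongatedIdeal{a+1}\cap\ProlongatedRing{b}$, then $\ProlongatedIdeal{a+1}\cap\ProlongatedRing{b+1}=\ProlongatedIdeal{a+2}\cap\ProlongatedRing{b+1}$, obtained by applying the total derivation $\BaseFieldDerivation$ to a witnessing combination for the first equality and invoking the primality and complete intersection properties of the prolongation ideals. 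Iterating the shift principle starting from $(a,b)=(\indexb,\DifferentialSystemOrder{\empty})$ yields $\ProlongatedIdeal{\indexb+s}\cap\ProlongatedRing{\DifferentialSystemOrder{\empty}+s}=\ProlongatedIdeal{\indexb+s+1}\cap\ProlongatedRing{\DifferentialSystemOrder{\empty}+s}$ for every $s\geq 0$, and restricting both sides to $\ProlongatedRing{\DifferentialSystemOrder{\empty}}\subseteq\ProlongatedRing{\DifferentialSystemOrder{\empty}+s}$ completes the induction. The main obstacle is the shift principle itself, which requires a careful analysis of how the top-order variables in $\timederivative{\EquationSet}{a+1}$ enter the derived relation, and how the primality of the prolongation ideals can be leveraged to cancel the extra contributions.
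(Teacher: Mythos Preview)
Your outline reduces the lemma to two ingredients: (A) the chain $\ProlongatedIdeal{k}\cap\ProlongatedRing{\DifferentialSystemOrder{\empty}}$ eventually stabilizes at $[\EquationSet]\cap\ProlongatedRing{\DifferentialSystemOrder{\empty}}$, and (B) a \emph{shift principle} saying that one-step stability at level $(\indexb,\DifferentialSystemOrder{\empty})$ propagates to $(\indexb+1,\DifferentialSystemOrder{\empty}+1)$, hence to all higher levels. Ingredient (A) is immediate from Noetherianity of $\ProlongatedRing{\DifferentialSystemOrder{\empty}}$ and the fact that $\bigcup_k\ProlongatedIdeal{k}=[\EquationSet]$; your characteristic-set argument for (A) using a characteristic set of $[\EquationSet]$ is correct but unnecessary, and it does not feed into (B). All the content of the lemma is therefore in (B), and you do not prove it: you state it as a plan, suggest it should follow by ``applying the total derivation $\BaseFieldDerivation$ to a witnessing combination,'' and then explicitly call it ``the main obstacle.'' This is where the proposal fails to be a proof.

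Moreover, the suggested mechanism for the shift principle does not work in any obvious way. Differentiating a membership $Q\in\ProlongatedIdeal{a}$ gives $\dot Q\in\ProlongatedIdeal{a+1}\cap\ProlongatedRing{b+1}$, which goes in the wrong direction: you need to take an arbitrary $P\in\ProlongatedIdeal{a+2}\cap\ProlongatedRing{b+1}$ and push it \emph{down} to $\ProlongatedIdeal{a+1}$. The hypothesis only controls elements of order $\le b$, while a decomposition of $P$ in $\ProlongatedIdeal{a+2}$ involves $\timederivative{\EquationSet}{a+1}$, whose orders exceed $b+1$ in general, so one cannot directly invoke the hypothesis on the pieces. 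A correct proof of the shift principle along these lines requires the Jacobian-rank machinery of \cite{dalfonso:2008,dalfonso:2009} (the constancy of $d_{k,h}$ in $h$), which is precisely what the paper is offering an \emph{alternative} to with this lemma.

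For contrast, the paper's argument bypasses the shift principle entirely. It works not with a characteristic set of $[\EquationSet]$ but with an \emph{algebraic} characteristic set $\mathcal A$ of the prime ideal $\ProlongatedIdeal{\indexb}\cap\ProlongatedRing{\DifferentialSystemOrder{\empty}}$ itself, for an orderly ranking. From $\mathcal A$ one extracts, for each leading variable, the element of least order; the one-step stability hypothesis is exactly what guarantees that the first $\DifferentialSystemOrder{\empty}-h$ derivatives of any such element still lie in $\ProlongatedIdeal{\indexb}\cap\ProlongatedRing{\DifferentialSystemOrder{\empty}}$, which forces this sub-chain $\mathcal B$ to be \emph{differentially} autoreduced. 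Then $\mathcal B$ is a characteristic set of a prime differential ideal $\mathcal P$ sandwiched between $[\EquationSet]$ and $[\EquationSet]$, and reducing $\ProlongatedIdeal{\indexb}\cap\ProlongatedRing{\DifferentialSystemOrder{\empty}}$ by $\mathcal B$ to zero gives the conclusion directly. The key idea you are missing is to apply characteristic-set theory to $\ProlongatedIdeal{\indexb}\cap\ProlongatedRing{\DifferentialSystemOrder{\empty}}$ rather than to $[\EquationSet]$, so that the one-step hypothesis is used once and for all, without any inductive propagation.
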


\begin{proof}
Let $\mathcal{A}$ be an algebraic characteristic set of the prime ideal $\ProlongatedIdeal{\indexb}\cap\ProlongatedRing{\DifferentialSystemOrder{\empty}}$
for some orderly ranking on derivatives. From $\mathcal{A}$ we extract a
minimal chain $\mathcal{B}$ as follows: from all the polynomials in $\mathcal{A}$ with the same leading variable we take the one with the minimal order of derivation in this variable. We claim that $\mathcal{B}$ is
autoreduced in the differential meaning.

This is equivalent to the fact that, if $x_{\indexa}^{(h)}$ is the leading
derivative of some element $B$ of $\mathcal{B}$, then this derivative
does not appear in some other element. As we use an orderly ranking,
the $e-h$ first derivatives of $B$ belong to
$\ProlongatedRing{\DifferentialSystemOrder{\empty}}$ and, since by assumption $\ProlongatedIdeal{\indexb+1}\cap\ProlongatedRing{\DifferentialSystemOrder{\empty}}=
\ProlongatedIdeal{\indexb}\cap\ProlongatedRing{\DifferentialSystemOrder{\empty}}$, they belong to  $\ProlongatedIdeal{\indexb}\cap\ProlongatedRing{\DifferentialSystemOrder{\empty}}$. Then, the
derivatives $x_{\indexa}^{(\ell)}$, $h<\ell\le e$, are the leading
derivatives of these elements of
$\ProlongatedIdeal{\indexb}\cap\ProlongatedRing{\DifferentialSystemOrder{\empty}}$.
These derivatives appear with degree $1$ and with initial equal to $S_{B}$, the
separant of $B$, that does not belong to $\ProlongatedIdeal{\indexb}\cap\ProlongatedRing{\DifferentialSystemOrder{\empty}}$. So they are the leading derivatives, with degree $1$, of some
elements of $\mathcal{A}$, and they do not appear in other elements of
this characteristic set. Hence our claim.

So, $\mathcal{B}$ is the characteristic set of some prime differential ideal $\
\mathcal{P}\subset[F]$ (see \cite{blop:2009}).
Now, it is easily seen that all polynomials in $\ProlongatedIdeal{\indexb}
\cap\ProlongatedRing{\DifferentialSystemOrder{\empty}}$ are reduced to
$0$ by $\mathcal{B}$, {which implies} that $F\subset\mathcal{P}$,
{so that} $[F]=\mathcal{P}$, {and also that $\ProlongatedIdeal{\indexb}
\cap\ProlongatedRing{\DifferentialSystemOrder{\empty}}=[F]
\cap\ProlongatedRing{\DifferentialSystemOrder{\empty}}$}.
\end{proof}

In the last part of this section, we recall in a geometric framework
the notion of initial conditions associated to a given differential
system.
\subsection{Hilbert-Kolchin regularity --- Independent
  variables} \label{HK_regularity}

Under our assumptions, the differential dimension of the prime
differential ideal~$\DifferentialIdeal{\EquationSet}$ is~$0$ (see e.g.~\cite{kondratieva:2009}). So,
following~\cite[Chapter II, Section 12, Theorem 6]{kolchin:1973}, the
transcendence degree of the fraction field of the
domain~${\ProlongatedRing{\indexb}/(\ProlongatedRing{\indexb}\cap
  \DifferentialIdeal{\EquationSet})}$ over the ground
field~$\BaseField$ becomes constant for all~$\indexa$ sufficiently
big. This constant is a non-negative integer called \emph{the order
  of~$\DifferentialIdeal{\EquationSet}$} and it is denoted
by~$\DifferentialIdealOrder{\EquationSet}$.
\par
The minimum of the indices~$\indexb_{0}$ such that the order
of~$\DifferentialIdeal{\EquationSet}$ equals the transcendence degree
of the fraction field
of~${\ProlongatedRing{\indexb}/(\ProlongatedRing{\indexb}\cap
  \DifferentialIdeal{\EquationSet}))}$ over~$\BaseField$ for
all~${\indexb \ge \indexb_{0}}$ is known as \emph{the Hilbert-Kolchin
  regularity of the ideal~$\DifferentialIdeal{ \EquationSet}$}.
In our situation, the Hilbert-Kolchin regularity
of~$\DifferentialIdeal{\EquationSet}$ is bounded
by~${\DifferentialSystemOrder{\empty}-1}$ (see \cite[Theorem
12]{dalfonso:2009}).

{}From the results of the previous subsection, it follows that the differentiation index of the system $\DifferentialSystem$ is at most $\DifferentialSystemOrder{\empty} \DifferentialIndeterminateNumber - \DifferentialIdealOrder{\EquationSet}$ (for more precise bounds, see for instance, \cite{dalfonso:2009}).

Since the fraction fields of
the domains~${\ProlongatedRing{\DifferentialSystemOrder{\empty}-1}/(
  \ProlongatedRing{\DifferentialSystemOrder{\empty}-1} \cap
  \DifferentialIdeal{\EquationSet})}$
and~${\ProlongatedRing{\DifferentialSystemOrder{\empty}}/(\ProlongatedRing{\DifferentialSystemOrder{\empty}}\cap\DifferentialIdeal{\EquationSet})}$
have the same transcendence degree over~$\BaseField$,
from the canonical
inclusion \[{\ProlongatedRing{\DifferentialSystemOrder{\empty}-1}/(
  \ProlongatedRing{\DifferentialSystemOrder{\empty}-1} \cap
  \DifferentialIdeal{\EquationSet})
  \hookrightarrow
  \ProlongatedRing{\DifferentialSystemOrder{\empty}}/(\ProlongatedRing{\DifferentialSystemOrder{\empty}}\cap\DifferentialIdeal{\EquationSet})},\]
we conclude that there exists
in~$\alltimederivative{\StateVariableSet}{\DifferentialSystemOrder{\empty}-1}$
a subset~$\TranscendenceBasis$ of~$\DifferentialIdealOrder{\EquationSet}$
many variables that is a transcendence basis of both these fields.
Moreover, we may also choose $U$ in such a way that $x_{j}^{(h)}\in U$ implies
$x_{j}^{(\ell)}\in U$ for every $0\le \ell\le h$, {e.g.\ $U$ may be
chosen as the set of derivatives that are not leading derivatives of
the algebraic characteristic set $\mathcal{A}$ in the proof of Lemma~\ref{stal-un-jour}}.
We
are going to see in the sequel that this set of variables could be
considered as initial conditions.

\section{A related vector field over an algebraic hypersurface}
\label{hypersurface}
In this section we exhibit a new \DifferentialAlgebraicSystem\
system~$\Output{\DifferentialSystem}$ related (in a non intrinsic way)
to the original one~$\DifferentialSystem$. This new
\DifferentialAlgebraicSystem\ system has a very particular structure:
a single purely algebraic (polynomial)
equation~${\PrimitiveElementMinimalPolynomial=0}$ plus an
under-determined \textsc{ode} system (see Definition~\ref{sigma1}
page~\pageref{sigma1}). In particular,~$\Output{\DifferentialSystem}$ is a
\emph{semi-explicit \DifferentialAlgebraicSystem\ system} in the usual
sense (see for instance~\cite[Section 1.2]{brenan:1996}). Moreover, we
will prove that the differentiation index
of~$\Output{\DifferentialSystem}$ is~$1$ (see Proposition~\ref{index1}
page~\pageref{index1}).
\par
The polynomial equation~${\PrimitiveElementMinimalPolynomial=0}$ is
obtained by means of a classical, \emph{purely algebraic} procedure
known today as the \emph{geometric resolution} (see
Section~\ref{res-geo}) applied to a suitable algebraic variety
associated to the input \DifferentialAlgebraicSystem\
system~$\DifferentialSystem$ and some of its derivatives (see
Section~\ref{geosol}). The differential equations
of~$\Output{\DifferentialSystem}$ are introduced in
Section~\ref{vector_field}.
\par
We leave for Section~\ref{passing} the analysis of the relations between
the solutions of both~\DifferentialAlgebraicSystem\
systems~$\DifferentialSystem$ and~$\Output{\DifferentialSystem}$.

\subsection {The prolonged algebraic system and its partial
  specialization}
\label{geosol}
We keep the notations and assumptions introduced in
Section~\ref{preliminaries} related to the
\DifferentialAlgebraicSystem\ input system~$\DifferentialSystem$.
\par
We recall that~$\TranscendenceBasis$ denotes a subset
of~$\alltimederivative{\StateVariableSet}{\DifferentialSystemOrder{\empty}-1}$
that is a transcendence basis of the fraction fields of the
domains~${\ProlongatedRing{\DifferentialSystemOrder{\empty}-1}/(\DifferentialIdeal{\EquationSet}
  \cap \ProlongatedRing{\DifferentialSystemOrder{\empty}-1})}$
and~${\ProlongatedRing{\DifferentialSystemOrder{\empty}}/(
  \DifferentialIdeal{\EquationSet} \cap
  \ProlongatedRing{\DifferentialSystemOrder{\empty}})}$. Following
Section~\ref{HK_regularity} such a basis exists and its cardinality
is~$\DifferentialIdealOrder{\EquationSet}$. Recall
that~$\DifferentiationIndex$ denotes the differentiation index
of~$\DifferentialSystem$ introduced in Section~\ref{defindex}.

\begin{proposition} \label{UW}
  \begin{enumerate}
  \item The variables~$\TranscendenceBasis$ as elements of the
    ring~${\ProlongatedRing{\DifferentialSystemOrder{\empty}+\DifferentiationIndex}/\ProlongatedIdeal{\DifferentiationIndex+1}}$
    remain algebraically independent over~$\BaseField$.
  \item Let~$\SpecializationVariableSet$ be a subset
    of~$\alltimederivative{\StateVariableSet}{\DifferentiationIndex+\DifferentialSystemOrder{\empty}}$
    such that~${\{\TranscendenceBasis, \SpecializationVariableSet\}}$
    is a transcendence basis of the fraction field
    of~${\ProlongatedRing{\DifferentialSystemOrder{\empty}+\DifferentiationIndex}/\ProlongatedIdeal{\DifferentiationIndex+1}}$. Then
    every variable in~$\SpecializationVariableSet$ has order at
    least~${\DifferentialSystemOrder{\empty}+1}$; in other
    words,~$\SpecializationVariableSet$ is a subset of~${\{
      \timederivative{\StateVariableSet}{\indexb};\
      \DifferentialSystemOrder{\empty}+1\le \indexb \le
      \DifferentialSystemOrder{\empty} + \DifferentiationIndex\}}$.
  \end{enumerate}
\end{proposition}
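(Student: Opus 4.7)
The plan is to deduce both parts from the instance $c = \DifferentialSystemOrder{\empty}$ of Theorem~\ref{index}, which gives the key identity
\[
\ProlongatedIdeal{\DifferentiationIndex+1} \cap \ProlongatedRing{\DifferentialSystemOrder{\empty}} = \DifferentialIdeal{\EquationSet} \cap \ProlongatedRing{\DifferentialSystemOrder{\empty}},
\]
combined with the defining property of $\TranscendenceBasis$: by its construction in Section~\ref{HK_regularity}, $\TranscendenceBasis$ is a transcendence basis of the fraction field of $\ProlongatedRing{\DifferentialSystemOrder{\empty}}/\bigl(\ProlongatedRing{\DifferentialSystemOrder{\empty}} \cap \DifferentialIdeal{\EquationSet}\bigr)$, with all its elements lying in $\alltimederivative{\StateVariableSet}{\DifferentialSystemOrder{\empty}-1}$.

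For part~1, any nontrivial algebraic dependence relation among the elements of $\TranscendenceBasis$ modulo $\ProlongatedIdeal{\DifferentiationIndex+1}$ would be witnessed by a nonzero polynomial of $\BaseField[\TranscendenceBasis]$ lying in $\ProlongatedIdeal{\DifferentiationIndex+1}$. Since $\TranscendenceBasis \subset \ProlongatedRing{\DifferentialSystemOrder{\empty}-1} \subset \ProlongatedRing{\DifferentialSystemOrder{\empty}}$, such a polynomial would belong to $\ProlongatedIdeal{\DifferentiationIndex+1} \cap \ProlongatedRing{\DifferentialSystemOrder{\empty}}$, hence, by the displayed identity, to $\DifferentialIdeal{\EquationSet}$. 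This would contradict the algebraic independence of $\TranscendenceBasis$ modulo $\DifferentialIdeal{\EquationSet}$.

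For part~2, I would argue by contradiction: suppose some variable $v \in \SpecializationVariableSet$ has order at most $\DifferentialSystemOrder{\empty}$, so that $v \in \ProlongatedRing{\DifferentialSystemOrder{\empty}}$. Since $\TranscendenceBasis$ is a transcendence basis of the fraction field of $\ProlongatedRing{\DifferentialSystemOrder{\empty}}/\bigl(\ProlongatedRing{\DifferentialSystemOrder{\empty}} \cap \DifferentialIdeal{\EquationSet}\bigr)$, there exists a nonzero polynomial $P \in \BaseField[\TranscendenceBasis, v]$ of positive degree in $v$ such that $P \in \DifferentialIdeal{\EquationSet} \cap \ProlongatedRing{\DifferentialSystemOrder{\empty}}$. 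Invoking the identity above, $P$ also lies in $\ProlongatedIdeal{\DifferentiationIndex+1}$, which exhibits an algebraic relation on $\{\TranscendenceBasis, v\}$ modulo $\ProlongatedIdeal{\DifferentiationIndex+1}$ and contradicts the choice of $\{\TranscendenceBasis, \SpecializationVariableSet\}$ as algebraically independent. Hence every element of $\SpecializationVariableSet$ must have order strictly greater than $\DifferentialSystemOrder{\empty}$.

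The only delicate point I anticipate is ensuring that Theorem~\ref{index} is invoked at $c = \DifferentialSystemOrder{\empty}$ rather than at the value $c = \DifferentialSystemOrder{\empty}-1$ used in the original definition of $\DifferentiationIndex$: this one-step shift is precisely what produces the prolongation order $\DifferentiationIndex+1$ matching the ring $\ProlongatedRing{\DifferentialSystemOrder{\empty}+\DifferentiationIndex}/\ProlongatedIdeal{\DifferentiationIndex+1}$ appearing in the statement, and it is the reason why $\TranscendenceBasis$ is required to be a transcendence basis of \emph{both} the fields $\ProlongatedRing{\DifferentialSystemOrder{\empty}-1}/(\ldots)$ and $\ProlongatedRing{\DifferentialSystemOrder{\empty}}/(\ldots)$ in Section~\ref{HK_regularity}. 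Once this alignment is verified, the rest is a short bookkeeping application of the two facts above, with no further technical obstacle.
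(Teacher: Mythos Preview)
Your proposal is correct and follows essentially the same approach as the paper: both invoke Theorem~\ref{index} at $\indexc=\DifferentialSystemOrder{\empty}$ (the paper also mentions Lemma~\ref{stal-un-jour}) to obtain $\ProlongatedIdeal{\DifferentiationIndex+1}\cap\ProlongatedRing{\DifferentialSystemOrder{\empty}}=\DifferentialIdeal{\EquationSet}\cap\ProlongatedRing{\DifferentialSystemOrder{\empty}}$, which the paper packages as an injective $\BaseField$-algebra map $\ProlongatedRing{\DifferentialSystemOrder{\empty}}/(\DifferentialIdeal{\EquationSet}\cap\ProlongatedRing{\DifferentialSystemOrder{\empty}})\hookrightarrow\ProlongatedRing{\DifferentialSystemOrder{\empty}+\DifferentiationIndex}/\ProlongatedIdeal{\DifferentiationIndex+1}$ while you spell out the same content via explicit polynomial relations. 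Your remark about needing the $\indexc=\DifferentialSystemOrder{\empty}$ instance (rather than $\indexc=\DifferentialSystemOrder{\empty}-1$) and hence the fact that $\TranscendenceBasis$ is a transcendence basis at level $\DifferentialSystemOrder{\empty}$ is exactly the point.
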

\begin{proof}
  Note that Theorem~\ref{index} (for~${\indexa= \DifferentialSystemOrder{\empty}}$) or Lemma \ref{stal-un-jour} imply that the canonical
  inclusion of~$\ProlongatedRing{\DifferentialSystemOrder{\empty}}$
  in~$\ProlongatedRing{\DifferentialSystemOrder{\empty}+\DifferentiationIndex}$
induces an injective~$\BaseField$-algebra
morphism~${\ProlongatedRing{\DifferentialSystemOrder{\empty}}/(
  \DifferentialIdeal{\EquationSet} \cap
  \ProlongatedRing{\DifferentialSystemOrder{\empty}}) \hookrightarrow
  \ProlongatedRing{\DifferentialSystemOrder{\empty}+\DifferentiationIndex}/\ProlongatedIdeal{\DifferentiationIndex
    +1}}$. In particular, this inclusion
preserves~$\BaseField$-algebraically free elements and then the
statement~(1) follows. In order to prove the second assertion simply
observe that~$\TranscendenceBasis$ is a transcendence basis of the
fraction field~${\ProlongatedRing{\DifferentialSystemOrder{\empty}} /
  (\DifferentialIdeal{\EquationSet} \cap
  \ProlongatedRing{\DifferentialSystemOrder{\empty}})}$ and then, for every $1\le \indexa\le \DifferentialIndeterminateNumber$, $\{ \TranscendenceBasis, x_{\indexa}^{(\DifferentialSystemOrder{\empty})} \}$
  is an algebraically dependent set modulo $\DifferentialIdeal{\EquationSet} \cap
  \ProlongatedRing{\DifferentialSystemOrder{\empty}}$, and the same holds in $\ProlongatedRing{\DifferentialSystemOrder{\empty}
  +\DifferentiationIndex}/ \ProlongatedIdeal{\DifferentiationIndex
    +1}$.
\end{proof}

Let~$\SpecializationVariableSet$ be a subset of~${\bigl\{
  \timederivative{\StateVariableSet}{\indexb} ;\
  \DifferentialSystemOrder{\empty}+1 \le \indexb \le
  \DifferentialSystemOrder{\empty} + \DifferentiationIndex\bigr\}}$
verifying the second assertion in Proposition~\ref{UW} (observe
that if~${\DifferentiationIndex=0}$ there are no
variables~$\SpecializationVariableSet$);
since~$\ProlongatedIdeal{\DifferentiationIndex +1}$ is a complete
intersection prime ideal of the polynomial
ring~$\ProlongatedRing{\DifferentialSystemOrder{\empty}+\DifferentiationIndex}$
(Proposition~\ref{prime}), we have that the cardinality
of~${\{\TranscendenceBasis,\SpecializationVariableSet\}}$ equals the
number of variables of the polynomial
ring~$\ProlongatedRing{\DifferentialSystemOrder{\empty}+\DifferentiationIndex}$
minus the number of elements of the regular sequence
defining~$\ProlongatedIdeal{\DifferentiationIndex +1}$. In other
words:
\[\card \{\TranscendenceBasis
,\SpecializationVariableSet\}=\dim
\ProlongatedRing{\DifferentialSystemOrder{\empty}
  +\DifferentiationIndex}-(\DifferentiationIndex
+1)\EquationSetCardinal =( \DifferentialSystemOrder{\empty} +
\DifferentiationIndex +1)\EquationSetCardinal-(\sigma
+1)\EquationSetCardinal=\EquationSetCardinal\DifferentialSystemOrder{\empty}.
\]
Let $\SpecializationVariableSetCardinal$ be the
cardinality of~$\SpecializationVariableSet$, that
is~${\EquationSetCardinal \DifferentialSystemOrder{\empty} -
  \DifferentialIdealOrder{\EquationSet}}$.  For any differential
polynomial~$\TheEquation{\empty}$
in~$\DifferentialPolynomialRing{\BaseField}{\StateVariableSet}$ and
any point~$\SpecializationPoint$
in~$\AffineSpace{\SpecializationVariableSetCardinal}$ denote
by~$\SpecializationOf{\TheEquation{\empty}}{\SpecializationPoint}$ the
polynomial obtained by replacing in~$\TheEquation{\empty}$ the
variables~$\SpecializationVariableSet$ by the corresponding
value~$\SpecializationPoint$.
\par

\begin{proposition} \label{omega0} There exists a nonempty Zariski
  open subset of~$\AffineSpace{\SpecializationVariableSetCardinal}$
  such that for any $\SpecializationPoint$ in this set and for all
  integer~$\indexa$ such that~${1\le \indexa \le \DifferentiationIndex
    +1}$, the following conditions are satisfied:
  \begin{enumerate}
 \item The sequence $\SpecializationOf{\timederivative{\EquationSet}{\indexb}}{\SpecializationPoint}$
    for~${0\le \indexb \le \indexa-1}$ is a reduced regular sequence in $\ProlongatedRing{\DifferentialSystemOrder{\empty}+\DifferentiationIndex}$.  In particular, the ideals~${\ProlongatedIdeal{\indexa} +
      (\SpecializationVariableSet -\SpecializationPoint)}$
    in~$\ProlongatedRing{\DifferentialSystemOrder{\empty}+\DifferentiationIndex}$
    are radical and complete intersection.
  \item No prime component of these
    ideals~${\ProlongatedIdeal{\indexa} + (\SpecializationVariableSet
      -\SpecializationPoint)}$ contains a nonzero polynomial pure
    in~$\TranscendenceBasis$.
  \end{enumerate}
\end{proposition}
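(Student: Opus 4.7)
My plan is, for each integer $\indexa$ with $1 \le \indexa \le \DifferentiationIndex+1$, to isolate a nonempty Zariski open subset $\Omega_{\indexa}$ of $\AffineSpace{\SpecializationVariableSetCardinal}$ on which the two conclusions hold at level $\indexa$, and then to take $\Omega_0 := \bigcap_{\indexa=1}^{\DifferentiationIndex+1}\Omega_{\indexa}$. Since $\AffineSpace{\SpecializationVariableSetCardinal}$ is irreducible and the intersection is finite, $\Omega_0$ will automatically be a nonempty open set.

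First I would assemble the hypotheses available at a fixed level $\indexa$. By Proposition~\ref{prime}, the ideal $\ProlongatedIdeal{\indexa}$ is prime and complete intersection in $\ProlongatedRing{\DifferentialSystemOrder{\empty}+\DifferentiationIndex}$, generated by the regular sequence $\alltimederivative{\EquationSet}{\indexa-1}$ of length $\indexa\EquationSetCardinal$. Since $\ProlongatedIdeal{\indexa} \subseteq \ProlongatedIdeal{\DifferentiationIndex+1}$, Proposition~\ref{UW} ensures that $\{\TranscendenceBasis,\SpecializationVariableSet\}$ remains algebraically independent modulo $\ProlongatedIdeal{\indexa}$; in particular the $\SpecializationVariableSetCardinal$ coordinates of $\SpecializationVariableSet$ are algebraically independent modulo $\ProlongatedIdeal{\indexa}$, so slicing with the affine subspace $\SpecializationVariableSet = \SpecializationPoint$ corresponds geometrically to a generic linear section in those $\SpecializationVariableSetCardinal$ directions.

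Next I would invoke the Bertini-type statement furnished by the first appendix, applied to the prime complete intersection $\ProlongatedIdeal{\indexa}$ and to the algebraically independent parameters $\SpecializationVariableSet$. This should produce a nonempty Zariski open $\Omega_{\indexa} \subset \AffineSpace{\SpecializationVariableSetCardinal}$ such that, for every $\SpecializationPoint \in \Omega_{\indexa}$, the ideal $\ProlongatedIdeal{\indexa}+(\SpecializationVariableSet-\SpecializationPoint)$ is radical and complete intersection of codimension $\indexa\EquationSetCardinal+\SpecializationVariableSetCardinal$; equivalently, the specialized polynomials $\SpecializationOf{\timederivative{\EquationSet}{\indexb}}{\SpecializationPoint}$ for $0\le\indexb\le\indexa-1$ form a reduced regular sequence in $\ProlongatedRing{\DifferentialSystemOrder{\empty}+\DifferentiationIndex}$, giving assertion~(1). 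Exploiting additionally that $\{\TranscendenceBasis,\SpecializationVariableSet\}$ is algebraically independent modulo $\ProlongatedIdeal{\indexa}$, the same Bertini-type argument should also guarantee, after possibly shrinking $\Omega_{\indexa}$, that $\TranscendenceBasis$ remains algebraically independent modulo every minimal prime of $\ProlongatedIdeal{\indexa}+(\SpecializationVariableSet-\SpecializationPoint)$, which is exactly assertion~(2) at level $\indexa$.

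The main obstacle, in my view, is extracting both halves of the conclusion from a single Bertini-type principle: naive generic smoothness yields reducedness only on a dense open subscheme of the specialized fiber rather than on the whole scheme, and the preservation of algebraic independence of $\TranscendenceBasis$ modulo every component requires excluding the possibility that some component projects to a proper subvariety in the $\TranscendenceBasis$-direction under the generic slicing. Both requirements should however be precisely the content of the Bertini-type results collected in the first appendix, so once those are in hand the rest of the argument reduces to a clean transfer of generic properties of $\ProlongatedIdeal{\indexa}$ to a sufficiently generic closed fiber $\SpecializationVariableSet=\SpecializationPoint$.
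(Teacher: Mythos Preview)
Your proposal is correct and follows essentially the same route as the paper: the paper's proof invokes Theorem~\ref{esp_reg_seq} for assertion~(1) and Corollary~\ref{intersection} (using that each $\ProlongatedIdeal{\indexa}$ is prime) for assertion~(2), which are precisely the Bertini-type statements from the appendix you appeal to, and the finite intersection of open sets is implicit there as well. The only cosmetic difference is that Theorem~\ref{esp_reg_seq}, applied once to the full reduced regular sequence $\alltimederivative{\EquationSet}{\DifferentiationIndex}$, already yields~(1) for every prefix $1\le\indexa\le\DifferentiationIndex+1$ simultaneously, so the level-by-level treatment is only needed for~(2).
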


\begin{proof}
 The proposition is a consequence of the results given in Appendix
 \ref{appendix:specialization}. The first statement follows directly
 from Theorem~\ref{esp_reg_seq} and for the second one we apply
 Corollary~\ref{intersection} (remark that the
 ideals~$\ProlongatedIdeal{\indexa}$ are supposed to be prime).
\end{proof}

Now we introduce an algebraic variety defined by the prolonged
equations of the input system~$\DifferentialSystem$ up to
order~$\DifferentiationIndex$ followed by a specialization of the
variables~$\SpecializationVariableSet$. Fix a specialization
point~$\SpecializationPoint$
in~$\AffineSpace{\SpecializationVariableSetCardinal}$ and suppose it
belongs to the Zariski open set given by Proposition~\ref{omega0}.
\begin{notation}
  \label{variety}
  Let $\SpecializedProlongatedIdeal$ be the ideal spanned by
  the
  subset~$\SpecializationOf{\alltimederivative{\EquationSet}{\DifferentiationIndex}}{\SpecializationPoint}$
  of~${\BaseField\!\left[\alltimederivative{\StateVariableSet}{\DifferentialSystemOrder{\empty}+\DifferentiationIndex}\setminus
      \SpecializationVariableSet\right]}$. We denote by~$\Variety{V}$
  the algebraic (equidimensional) variety
  in~$\AffineSpace{\DifferentialIdealOrder{\EquationSet} +
    (1+\DifferentiationIndex) \EquationSetCardinal}$ defined
  by the ideal~$\SpecializedProlongatedIdeal$ and by~${\Variety{V}_{1}\cup \cdots \cup \Variety{V}_{N}}$ the irreducible
  decomposition of~$\Variety{V}$.
\end{notation}

{
\begin{example}
The ideal $\SpecializedProlongatedIdeal$ may actually fail to be prime
for all values of $\mathcal{W}$ in a dense set, as shown by the
following example: $x_{1}^{(2)}-{x_{2}^{(2)}}^2=0$, $x_{2}=0$. It is easy to see that $\sigma=2$ and we may choose $\{x_{1}^{(4)},x_{2}^{(4)}\}$ as the set
$W$.  Then, for an orderly ordering, an algebraic characteristic set (in fact, a system of generators) of
$\SpecializedProlongatedIdeal$  is
$2{x_{2}^{(3)}}^{2}-\mathcal{W}_{1}$, $x_{1}^{(3)}$, $x_{1}^{(2)}$,
$x_{2}^{(2)}$, $x_{2}'$, $x_{2}$. We see that the ideal is prime if and only if
${\mathcal{W}_{1}}/{2}$ is not a square in $\BaseField$. Moreover, even in
the prime case, the field extension associated to
$\SpecializedProlongatedIdeal$ is a non-trivial algebraic extension of
degree $2$ of the field associated to the ideal
$[F]\cap\ProlongatedRing{\DifferentialSystemOrder{\empty}}$, which is
$1$. On the other hand, we could choose also $W$ as the set $\{x_{2}^{(4)},
x_{2}^{(3)}\}$; in this case $\SpecializedProlongatedIdeal$ is prime and its associated variety is birational equivalent to 
$V([F]\cap\ProlongatedRing{\DifferentialSystemOrder{\empty}})$.

Finding whenever possible, such a choice of $W$, remains a subject for
further investigations.
\end{example}
}

Observe that the algebraic variety~$\Variety{V}$ is not intrinsically
associated to the input~\DifferentialAlgebraicSystem\ system because
its definition depends on the choice of the transcendence
basis~$\TranscendenceBasis$, the
variables~$\SpecializationVariableSet$, and the
point~$\SpecializationPoint$ where the
variables~$\SpecializationVariableSet$ are evaluated.
\par
Let us also remark that the second assertion in
Proposition~\ref{omega0} states that the projection on
the~$\TranscendenceBasis$-space of any irreducible
component~$\Variety{V}_{\indexa}$ is \emph{dominant}; i.e.\ the
closure of the image of~$\Variety{V}_i$ by the projection on the
variables~$\TranscendenceBasis$ is the whole space~$\AffineSpace{
  \DifferentialIdealOrder{\EquationSet}}$ or equivalently, the natural
ring map~$\PolynomialRing{\BaseField}{\TranscendenceBasis}\to
\PolynomialRing{\BaseField}{\Variety{V}_{\indexa}}$ is injective.
\par
The following proposition shows that the identity~${
  \DifferentialIdeal{\EquationSet} \cap
  \ProlongatedRing{\DifferentialSystemOrder{\empty} } =
  \ProlongatedIdeal{\DifferentiationIndex+1} \cap
  \ProlongatedRing{\DifferentialSystemOrder{\empty}}}$ (see
Theorem~\ref{index} and Lemma \ref{stal-un-jour}) remains correct after specialization in a
suitable~$\SpecializationPoint$:
\begin{proposition}
  \label{intersec}
  Let $\SpecializationPoint$
  in $\AffineSpace{\SpecializationVariableSetCardinal}$ chosen as in
  Proposition~\ref{omega0}.
  Then the identity $\DifferentialIdeal{\EquationSet} \cap
    \ProlongatedRing{\DifferentialSystemOrder{\empty}}
    =\SpecializedProlongatedIdeal \cap
    \ProlongatedRing{\DifferentialSystemOrder{\empty}}$ holds.
\end{proposition}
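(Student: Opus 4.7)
The easy inclusion $\DifferentialIdeal{\EquationSet}\cap\ProlongatedRing{\DifferentialSystemOrder{\empty}}\subseteq\SpecializedProlongatedIdeal\cap\ProlongatedRing{\DifferentialSystemOrder{\empty}}$ will follow from the observation that, by Theorem~\ref{index} (or equivalently Lemma~\ref{stal-un-jour}) applied at the level $c=\DifferentialSystemOrder{\empty}$, any $f$ in the left-hand side actually lies in $\ProlongatedIdeal{\DifferentiationIndex+1}$ and hence admits a representation $f=\sum g_{ij}\,\timederivative{\TheEquation{i}}{j}$ with $g_{ij}\in\ProlongatedRing{\DifferentialSystemOrder{\empty}+\DifferentiationIndex}$. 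Since $\SpecializationVariableSet$ consists of variables of order at least $\DifferentialSystemOrder{\empty}+1$ by Proposition~\ref{UW}(2), the polynomial $f$ does not involve those variables, so specializing $\SpecializationVariableSet\mapsto\SpecializationPoint$ in both sides of the combination exhibits $f$ as an element of $\SpecializedProlongatedIdeal$.

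For the reverse inclusion, my plan is to pull the specialized ideal back to the ambient polynomial ring: $\SpecializedProlongatedIdeal\cap\ProlongatedRing{\DifferentialSystemOrder{\empty}}$ coincides with $(\ProlongatedIdeal{\DifferentiationIndex+1}+(\SpecializationVariableSet-\SpecializationPoint))\cap\ProlongatedRing{\DifferentialSystemOrder{\empty}}$ (again because $\ProlongatedRing{\DifferentialSystemOrder{\empty}}$ does not involve $\SpecializationVariableSet$), and then to compare this with $\ProlongatedIdeal{\DifferentiationIndex+1}\cap\ProlongatedRing{\DifferentialSystemOrder{\empty}}=\DifferentialIdeal{\EquationSet}\cap\ProlongatedRing{\DifferentialSystemOrder{\empty}}$ by a geometric, Nullstellensatz-style argument over $\AlgebraicClosure{\BaseField}$. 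Both contracted ideals are radical---the first because $\ProlongatedIdeal{\DifferentiationIndex+1}$ is prime, and the second because $\ProlongatedIdeal{\DifferentiationIndex+1}+(\SpecializationVariableSet-\SpecializationPoint)$ itself is radical by Proposition~\ref{omega0}(1)---so it suffices to show that their vanishing loci coincide. Each locus is the Zariski closure of the projection of a closed subvariety of $\AffineSpace{(\DifferentialSystemOrder{\empty}+\DifferentiationIndex+1)\EquationSetCardinal}$ onto the $\ProlongatedRing{\DifferentialSystemOrder{\empty}}$-coordinates; the one associated to $\DifferentialIdeal{\EquationSet}\cap\ProlongatedRing{\DifferentialSystemOrder{\empty}}$ is irreducible of dimension $\DifferentialIdealOrder{\EquationSet}$.

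The crux will be a dimension count on the specialized variety $V(\ProlongatedIdeal{\DifferentiationIndex+1}+(\SpecializationVariableSet-\SpecializationPoint))$. By Proposition~\ref{omega0}(1) this variety is equidimensional of dimension $\DifferentialIdealOrder{\EquationSet}$ (it is a complete intersection of the expected codimension $(\DifferentiationIndex+1)\EquationSetCardinal+\SpecializationVariableSetCardinal$), and by Proposition~\ref{omega0}(2) every irreducible component projects dominantly onto the $\TranscendenceBasis$-space. Since $\TranscendenceBasis$ is contained in the $\ProlongatedRing{\DifferentialSystemOrder{\empty}}$-coordinates, the projection of each component to those coordinates has dimension at least $\card(\TranscendenceBasis)=\DifferentialIdealOrder{\EquationSet}$, hence exactly $\DifferentialIdealOrder{\EquationSet}$; being an irreducible subvariety of maximal dimension inside the irreducible set $V(\DifferentialIdeal{\EquationSet}\cap\ProlongatedRing{\DifferentialSystemOrder{\empty}})$, its closure must equal the whole set. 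Taking the (finite) union over all components and invoking Nullstellensatz then yields the ideal equality. The main obstacle to manage is precisely this fibre-behaviour under specialization: the ``no nonzero polynomial pure in $\TranscendenceBasis$'' clause of Proposition~\ref{omega0}(2) is what prevents the specialized image from collapsing onto a proper subvariety of $V(\DifferentialIdeal{\EquationSet}\cap\ProlongatedRing{\DifferentialSystemOrder{\empty}})$, and it is therefore where the genericity assumption on $\SpecializationPoint$ is crucially used.
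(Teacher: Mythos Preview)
Your proposal is correct and follows essentially the same strategy as the paper's proof. The paper phrases the reverse inclusion purely algebraically---for each prime component~$\mathfrak{p}$ of~$\SpecializedProlongatedIdeal$, the contraction~$\mathfrak{p}\cap\ProlongatedRing{\DifferentialSystemOrder{\empty}}$ is a prime ideal containing~$\DifferentialIdeal{\EquationSet}\cap\ProlongatedRing{\DifferentialSystemOrder{\empty}}$ and of dimension at least~$\DifferentialIdealOrder{\EquationSet}$ (since $\PolynomialRing{\BaseField}{\TranscendenceBasis}\hookrightarrow\ProlongatedRing{\DifferentialSystemOrder{\empty}+\DifferentiationIndex}/\mathfrak{p}$ forces $\mathfrak{p}\cap\PolynomialRing{\BaseField}{\TranscendenceBasis}=0$), hence equals it---whereas you translate the same dimension-and-containment argument into projections of irreducible components and Nullstellensatz; the content is identical.
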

\begin{proof}
  Since~${\DifferentialIdeal{\EquationSet} \cap
    \ProlongatedRing{\DifferentialSystemOrder{\empty}} =
    \ProlongatedIdeal{\DifferentiationIndex+1} \cap
    \ProlongatedRing{\DifferentialSystemOrder{\empty}}}$ and the
  variables~$\SpecializationVariableSet$ do not appear in the
  ring~$\ProlongatedRing{\DifferentialSystemOrder{\empty}}$, the
  ideal~${\DifferentialIdeal{\EquationSet} \cap
    \ProlongatedRing{\DifferentialSystemOrder{\empty}}}$ is included
  in~${\SpecializedProlongatedIdeal \cap
    \ProlongatedRing{\DifferentialSystemOrder{\empty}}}$. On the other
  hand if~$\mathfrak{p}$ is a primary component
  of~$\SpecializedProlongatedIdeal$, we have
  that~${\PolynomialRing{\BaseField}{\TranscendenceBasis}
    \hookrightarrow \ProlongatedRing{\DifferentialSystemOrder{\empty}
      + \DifferentiationIndex}/\mathfrak{p}}$ (because of the choice
  of~$\SpecializationPoint$ verifying
  Proposition~\ref{omega0}). Then~${\mathfrak{p} \cap
    \ProlongatedRing{\DifferentialSystemOrder{\empty}}}$ is a prime
  ideal of dimension at least~$\DifferentialIdealOrder{\EquationSet}$
  containing~${\DifferentialIdeal{\EquationSet} \cap
    \ProlongatedRing{\DifferentialSystemOrder{\empty}}}$, which is a
  prime ideal of
  dimension~$\DifferentialIdealOrder{\EquationSet}$. Hence both prime
  ideals are the same. Since the argument holds for any primary
  component of~$\SpecializedProlongatedIdeal$, the proposition
  follows.
\end{proof}
In other words, this proposition says that \emph{all} differential
conditions of order at most~$\DifferentialSystemOrder{\empty}$ induced
by the input system can be generated by differentiation of the
original equations up to order~$\DifferentiationIndex$ followed by the
specialization~${\SpecializationVariableSet \mapsto
  \SpecializationPoint}$.
\par
In particular, if ${\BaseField}=\mathbb{R}, \mathbb{C}$, suppose
that~${\AnalyticSolutionSet:=(\AnalyticSolution{1}, \ldots,
\AnalyticSolution{\EquationSetCardinal})}$ is a classical analytic
solution of the \DifferentialAlgebraicSystem\
system~$\DifferentialSystem$ defined locally in a neighborhood
of~$0$. Then, Proposition~\ref{intersec} implies that for any~$t$
in~$\Field{R}$ small enough, the complex vector formed by the
derivatives up to order~$\DifferentialSystemOrder{\empty}$ of the
function~$\AnalyticSolutionSet$ evaluated at the instant~$t$ is a
point of the algebraic
variety~${V}{([F]\cap\ProlongatedRing{\DifferentialSystemOrder{\empty}})}$,
\emph{independently} of the choice of the
variables~$\TranscendenceBasis,
\SpecializationVariableSet$ and the point~$\SpecializationPoint$
in~$\AffineSpace{\SpecializationVariableSetCardinal}$.

\subsection{A parametric geometric resolution of the
  variety~$\Variety{V}$}\label{res-geo}

{In this Section we introduce the algebraic part of our related {semi-explicit} \DifferentialAlgebraicSystem\ system
  $\Output{\DifferentialSystem}$ and certain rational functions that
  allow us to express solutions of $\DifferentialSystem$ from
  solutions of $\Output{\DifferentialSystem}$. To do this, we
 use a classical tool in effective Algebraic Geometry: the
  \emph{geometric resolution} of an equidimensional variety. In our
  case this construction will be applied to the algebraic
  variety~$\Variety{V}$ introduced in Notation \ref{variety}.}
\par
Let us explain informally this well-known notion (for simplicity, we
assume~$\BaseField$ is algebraically closed and of
characteristic~$0$): suppose that a~$d$-dimensional \emph{irreducible}
affine variety~$\Variety{V}$ in the~$m$-dimensional ambient
space~$\AffineSpace{m}$ is given. Then the
field~$\BaseField(\Variety{V})$ of rational functions
over~$\Variety{V}$ has transcendence degree~$d$ over the ground
field~$\BaseField$. Therefore, there exist~$d$ variables,
say~${\AlgebraicIndeterminate{1},\ldots ,\AlgebraicIndeterminate{d}}$,
such that the extension
field~${\BaseField(\AlgebraicIndeterminate{1},\ldots
  ,\AlgebraicIndeterminate{d}) \hookrightarrow
  \BaseField(\Variety{V})}$ is finite. These variables are called
\emph{parametric\/} or \emph{free\/} variables. The Primitive Element
Theorem (see for instance~\cite[\S{\empty}V.4, Theorem
4.6]{lang:2002}) asserts that there exists an
element~$\PrimitiveElement$ in~$\BaseField(\Variety{V})$ such
that~${\BaseField(\Variety{V})=\BaseField(\AlgebraicIndeterminate{1},\ldots,
  \AlgebraicIndeterminate{d})[\PrimitiveElement]}$; moreover, the
element~$\PrimitiveElement$ can be taken as a
generic~$\BaseField$-linear combination of the remaining
variables~${\AlgebraicIndeterminate{d+1},\ldots,\AlgebraicIndeterminate{m}}$. The
minimal polynomial~$\PrimitiveElementMinimalPolynomial$ of
$\PrimitiveElement$
over~${\BaseField(\AlgebraicIndeterminate{1},\ldots,
  \AlgebraicIndeterminate{d})}$ defines an irreducible
hypersurface~$\Variety{H}:={\{\PrimitiveElementMinimalPolynomial=0\}}$ in the affine
space~${\AffineSpace{d}\times \AffineSpace{1}}$
($\PrimitiveElementMinimalPolynomial$ can be taken with coefficients
in the polynomial
ring~$\PolynomialRing{\BaseField}{\AlgebraicIndeterminate{1},\ldots,
  \AlgebraicIndeterminate{d}}$). Since each of the (non-free)
variables~$\AlgebraicIndeterminate{d+1},\ldots,\AlgebraicIndeterminate{m}$,
as elements of the field~$\BaseField(\Variety{V})$, can be written as
rational functions in the variable~$\PrimitiveElement$ over the
field~$\BaseField(\AlgebraicIndeterminate{1},\ldots,\AlgebraicIndeterminate{d})$,
it follows that a dense open subset of the irreducible
variety~$\Variety{V}$ can be rationally parametrized from a dense
open subset of the hypersurface~$\Variety{H}$.
\par
The 4-tuple consisting of the parametric
set~${\{\AlgebraicIndeterminate{1},\ldots,\AlgebraicIndeterminate{d}\}}$,
the element~$\PrimitiveElement$, its minimal
polynomial~$\PrimitiveElementMinimalPolynomial$, and the rational
parametrizations is called a \emph{parametric geometric resolution of
  the variety}~$\Variety{V}$.
\par
{If} the variety~$\Variety{V}$ is not irreducible but
equidimensional, a similar construction can be reproduced with
suitable changes (see for instance~\cite[Section 2]{schost:2003}). For
instance, there is in general no chance that the same choice of the
free variables as a subset of the input variables works for any
component of~$\Variety{V}$. In this case a (generic) linear change of
coordinates may be necessary in order to obtain the same parametric
set of variables for any irreducible component. We point out also that
in this case, {for any choice of} the element~$\PrimitiveElement$,
each irreducible component of the induced
hypersurface~$\Variety{H}$ parametrizes generically one (and only
one) component of~$\Variety{V}$. In particular, the number of
irreducible components of~$\Variety{V}$ and~$\Variety{H}$ is the same.
\par\bigskip
In our situation, we consider a {parametric geometric resolution} for
the equidimensional variety~$\Variety{V}$ introduced in
Notation~\ref{variety}. From Proposition~\ref{omega0} we observe first
that the variables~$\TranscendenceBasis$ are a \emph{parametric set}
with respect to the equidimensional algebraic
variety~${\Variety{V}:=\Variety{V}_{1}\cup \cdots \cup
  \Variety{V}_{N}}$, since the canonical
morphism~${\PolynomialRing{\BaseField}{\TranscendenceBasis} \to
  \PolynomialRing{\BaseField}{\Variety{V}_{\indexa}}}$ is injective
and since the relations~${\DifferentialIdealOrder{\EquationSet} =\card
  \TranscendenceBasis=\dim \Variety{V}_{\indexa}}$ hold for
all~${\indexa=1,\ldots,N}$. In particular no linear change of
coordinates is necessary in order to obtain free variables with
respect to the irreducible components of~$\Variety{V}$. Secondly, the
ideal~$\SpecializedProlongatedIdeal$ is radical and so, it is the
defining ideal of~$\Variety{V}$. Moreover, it is generated by the
regular
sequence~$\SpecializationOf{\alltimederivative{\EquationSet}{\DifferentiationIndex}}{\SpecializationPoint}$.
\par
These facts imply that for each {prime}
ideal~$I(\Variety{V}_{\indexa})$ associated
to~$\SpecializedProlongatedIdeal$ defined in Notation~\ref{variety},
${I(\Variety{V}_{\indexa}) \otimes
  \BaseField(\TranscendenceBasis)}$ is a $0$-dimensional prime ideal in the
polynomial ring with coefficients in~$\BaseField(\TranscendenceBasis)$
and
variables~${\alltimederivative{\StateVariableSet}{\DifferentialSystemOrder{\empty}
    + \DifferentiationIndex} \setminus \{ \TranscendenceBasis,
  \SpecializationVariableSet\}}$. Hence the Jacobian determinant of
the
polynomials~
$\SpecializationOf{\alltimederivative{\EquationSet}{\DifferentiationIndex}}{\SpecializationPoint}$
with respect to these
variables~${\alltimederivative{\StateVariableSet}{\DifferentialSystemOrder{\empty}
    + \DifferentiationIndex} \setminus \{ \TranscendenceBasis,
  \SpecializationVariableSet\}}$ does not vanish identically over any
component~$\Variety{V}_{\indexa}$.
\par
Thus the requirements of~\cite[Section 2.1]{schost:2003} are fulfilled
and a parametric geometric
resolution~${\left(\TranscendenceBasis,\PrimitiveElement,\PrimitiveElementMinimalPolynomial,\left\lbrace\left(\tfrac{\GeometricResolutionFiber{\indexa}}{\PrimitiveElementMinimalPolynomial'}\right)\!,{1\le
        \indexa\le (1+\DifferentiationIndex)
        \StateVariableSetCardinal} \right\rbrace\right)}$ exists.
Here~$\PrimitiveElement$ is a~$\Field{Q}$-linear combination of the
variables~${\alltimederivative{\StateVariableSet}{\DifferentialSystemOrder{\empty}
    + \DifferentiationIndex} \setminus \{ \TranscendenceBasis,
  \SpecializationVariableSet\}}$, $\PrimitiveElementMinimalPolynomial$ the
square-free polynomial
in~$\PolynomialRing{\BaseField}{\TranscendenceBasis,\PrimitiveElement}$
of positive degree in~$\PrimitiveElement$ defining a
hypersurface~$\Variety{H}$
in~${\AffineSpace{\DifferentialIdealOrder{\EquationSet}} \times
  \AffineSpace{1}}$, and~$\PrimitiveElementMinimalPolynomial'$ the
partial
derivative~$\tPartialDerivative{\PrimitiveElementMinimalPolynomial}{\PrimitiveElement}{1}$. The
fractions~$\tfrac{\GeometricResolutionFiber{\indexa}}{\PrimitiveElementMinimalPolynomial'}$
in~$\BaseField(\TranscendenceBasis,\PrimitiveElement)$ are the
parametrizations of the remaining variables. More precisely,
each~$\GeometricResolutionFiber{\indexa}$ can be written as
\begin{equation}
  \label{ab}
  \GeometricResolutionFiber{\indexa}=\frac{a_\indexa(\TranscendenceBasis,\PrimitiveElement)}{b_\indexa(\TranscendenceBasis )}
\end{equation}
where~$a_{\indexa}(\TranscendenceBasis,\PrimitiveElement)$
in~$\PolynomialRing{\BaseField}{\TranscendenceBasis,
  \PrimitiveElement}$ and~$b_{\indexa}(\TranscendenceBasis)$
in~${\PolynomialRing{\BaseField}{\TranscendenceBasis} \setminus
  \{0\}}$ are coprime polynomials verifying that ${\deg_{\PrimitiveElement} a_{\indexa} <
  \deg_{\PrimitiveElement} \PrimitiveElementMinimalPolynomial}$;
furthermore, for each variable~$\AlgebraicIndeterminate{\empty}$
in~${\alltimederivative{\StateVariableSet}{\DifferentialSystemOrder{\empty}
    + \DifferentiationIndex} \setminus \{ \TranscendenceBasis,
  \SpecializationVariableSet\}}$, there exists an integer~$\indexb$
such that~${b_{\indexb}(\TranscendenceBasis)
  \PrimitiveElementMinimalPolynomial' \AlgebraicIndeterminate{\empty}
  -a_{\indexb}(\TranscendenceBasis,\PrimitiveElement)}$ vanishes on
the variety~$\Variety{V}$.
\par\medskip
We define the \emph{total ring of fractions} of the
variety~$\Variety{V}$ in the usual way as the Artinian
ring~${\BaseField(\Variety{V}) := \BaseField(\Variety{V}_{1})\times
  \cdots \times \BaseField( \Variety{V}_{N})}$ and analogously
for~$\Variety{H}$. Therefore, from the canonical ring
inclusions~${\PolynomialRing{\BaseField}{\TranscendenceBasis}
  \hookrightarrow \PolynomialRing{\BaseField}{\Variety{H}}
  \hookrightarrow \PolynomialRing{\BaseField}{\Variety{V}}}$, by means
of the geometric resolution and passing to the total ring of
fractions, we infer that the
relations~${\BaseField(\TranscendenceBasis) \hookrightarrow
  \BaseField( \Variety{H}) \cong \BaseField (\Variety{V})}$ hold. The
inverse application~${\BaseField(\Variety{V}) \to
  \BaseField(\Variety{H})}$ is induced by the
parametrization~${\AlgebraicIndeterminate{\empty} \mapsto
  \tfrac{\GeometricResolutionFiber{\indexa}}{\PrimitiveElementMinimalPolynomial'}}$,
for~$\AlgebraicIndeterminate{\empty}$
in~${\alltimederivative{\StateVariableSet}{\DifferentialSystemOrder{\empty}
    + \DifferentiationIndex} \setminus \{ \TranscendenceBasis,
  \SpecializationVariableSet\}}$.
\par
{}From a more geometrical point of view, these facts can be stated in
the following way. Consider the linear map~${\Psi:
  \AffineSpace{\DifferentialIdealOrder{\EquationSet} +
    (\DifferentiationIndex +1)\StateVariableSetCardinal} \to
  \AffineSpace{\DifferentialIdealOrder{\EquationSet}+1}}$ defined
as~${\alltimederivative{\StateVariableSet}
  {\DifferentialSystemOrder{\empty}+\DifferentiationIndex}\setminus
  \{\SpecializationVariableSet\}\mapsto
  (\TranscendenceBasis,\PrimitiveElement)}$. For each irreducible
component~$\Variety{V}_{\indexa}$ of~$\Variety{V}$ the restriction
of~$\Psi$ to~$\Variety{V}_{\indexa}$ induces an isomorphism between
suitable nonempty Zariski open sets of~$\Variety{V}_{\indexa}$ and of the
irreducible component~${\Variety{H}_{\indexa} =
  \overline{\Psi(\Variety{V}_{\indexa})}}$ of~$\Variety{H}$. In other
words, the components~$\Variety{V}_{\indexa}$
and~$\Variety{H}_{\indexa}$ are birationally equivalent, i.e.\ the
fields of rational functions~$\BaseField(\Variety{V}_{\indexa})$
and~$\BaseField(\Variety{H}_{\indexa})$ are~$\BaseField$-isomorphic.
\par
As shown in~\cite{schost:2003}, if the polynomials
defining~$\Variety{V}$ are encoded by straight line programs, a
parametric geometric resolution can be computed by means of a
probabilistic algorithm of bounded complexity in terms of certain
parameters.
\par
In order to estimate the running time of this algorithm in our case,
we point out that from a straight-line program of
length~$\StraightLineProgramLenght$ encoding the input
polynomials~$\EquationSet$, we can obtain a straight-line program of
length~{${O\!\left( ( ( \DifferentialSystemOrder{\empty}+
    \DifferentiationIndex) \DifferentialSystemOrder{\empty}
    \StateVariableSetCardinal + \StraightLineProgramLenght)
    \DifferentiationIndex^{2} \right)}$} encoding all the
polynomials~$\alltimederivative{\EquationSet}{\DifferentiationIndex}$
(see~\cite[Section 5.2]{matera:2003} or~\cite[Lemma
21]{dalfonso:2006}). Then applying~\cite[Theorem 2]{schost:2003} to
our algebraic system~${\left\{
    \SpecializationOf{\alltimederivative{\EquationSet}{\DifferentiationIndex}}{\SpecializationPoint}
    = 0 \right\}}$ and the parametric variables~$\TranscendenceBasis$,
we deduce the following complexity result concerning the computation
of a geometric resolution of~$\Variety{V}$:

\begin{proposition}\label{a-la-Schost}
  Assume the input polynomials~$\EquationSet$ are encoded by a
  straight-line program of length~$\StraightLineProgramLenght$. Then,
  a parametric geometric resolution of~$\Variety{V}$ can be computed
  through the following steps:
  \begin{enumerate}
  \item take a point~$\Variety{U}$
    in~$\BaseField^{\DifferentialIdealOrder{\EquationSet}}$ and
    compute a geometric resolution of the zeros of the system obtained by specializing $\TranscendenceBasis = \Variety{U}$ in~${\SpecializationOf
      {\alltimederivative{\EquationSet}{\DifferentiationIndex}}
      {\SpecializationPoint}}$;
  \item apply a formal Newton lifting process, that requires
    \[
    O_{\log}\left( \left(
        \begin{array}{c}
          (\EvaluationComplexity+\mathtt{N}^{3}) \mathtt{N} \mathbf{M}(\rho)
          \\
          + ( \DifferentialIdealOrder{\EquationSet})^{2} \, \rho\, \mathbf{M}(\deg(\Variety{V}))
        \end{array}
      \right) \mathbf{M}_s(4\deg(\Variety{V}),
      \DifferentialIdealOrder{\EquationSet}) \mathtt{N} \right)
    \]
    operations in~$\BaseField$,
  \end{enumerate}
  where~${\mathtt{N} = \DifferentialIdealOrder{\EquationSet}+
    (1+\DifferentiationIndex) \StateVariableSetCardinal}$,
  ${\EvaluationComplexity =((\DifferentialSystemOrder{\empty}
    +\DifferentiationIndex) \DifferentialSystemOrder{\empty}
    \StateVariableSetCardinal +\StraightLineProgramLenght)
    \DifferentiationIndex^{2}}$ and~$\rho$ stands for the degree of
  the projection~${\Variety{V} \to \AffineSpace{\rm{ord
        \DifferentialIdeal{\EquationSet}}}}$
  mapping~${\alltimederivative{\StateVariableSet}{\DifferentialSystemOrder{\empty}+\DifferentiationIndex}
    \setminus \SpecializationVariableSet}$ to~$\TranscendenceBasis$;
  $\mathbf{M}(\indexa)$ denotes the cost of the arithmetic operations
  with univariate polynomials of degrees bounded by~$\indexa$ with
  coefficients in a ring---we can take~${\mathbf{M}(\indexa) =
    O(\indexa \log^2(\indexa) \log\log
    (\indexa))}$---and~$\mathbf{M}_s(\indexa,\indexb)$ the cost of
  $\indexb$-variate series multiplication at
  precision~$\indexa$---that can be taken less than~$O_{\log}
  \bigl(\mathbf{M}\tbinom{\indexa+\indexb}{\indexb}\bigr)$.
\end{proposition}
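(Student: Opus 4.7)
The plan is to reduce the statement to a direct application of \cite[Theorem 2]{schost:2003} to the polynomial system $\SpecializationOf{\alltimederivative{\EquationSet}{\DifferentiationIndex}}{\SpecializationPoint}$ with the distinguished parametric subset of variables $\TranscendenceBasis$. To do so, I need to check that the three ingredients required by that theorem are available in our setting: an evaluation scheme (a straight-line program) for the input polynomials, a regular-sequence/complete-intersection property ensuring that the Jacobian with respect to the dependent variables is generically invertible along $\Variety{V}$, and a geometric resolution of a generic fibre over the parameter space from which to start the lifting.

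First I would construct a straight-line program for the prolonged system. Starting from the input SLP of length $\StraightLineProgramLenght$ encoding $\EquationSet$, successive formal derivations give an SLP encoding $\alltimederivative{\EquationSet}{\DifferentiationIndex}$ of length $O\!\bigl(((\DifferentialSystemOrder{\empty}+\DifferentiationIndex)\DifferentialSystemOrder{\empty}\StateVariableSetCardinal+\StraightLineProgramLenght)\DifferentiationIndex^{2}\bigr)$, exactly as in \cite[Section 5.2]{matera:2003} or \cite[Lemma 21]{dalfonso:2006}; substituting the constants $\SpecializationPoint$ for the variables $\SpecializationVariableSet$ does not increase the length. This gives the parameter $\EvaluationComplexity$ appearing in the statement.

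Next I would verify the algebraic hypotheses of Schost's theorem. The variety $\Variety{V}$ is equidimensional of dimension $\DifferentialIdealOrder{\EquationSet}$, the ideal $\SpecializedProlongatedIdeal$ is radical and generated by the regular sequence $\SpecializationOf{\alltimederivative{\EquationSet}{\DifferentiationIndex}}{\SpecializationPoint}$ (Proposition \ref{omega0}(1)), and the projection onto the $\TranscendenceBasis$-coordinates is dominant on every irreducible component (Proposition \ref{omega0}(2)). As already argued just before the proposition, the Jacobian determinant of the defining equations with respect to $\alltimederivative{\StateVariableSet}{\DifferentialSystemOrder{\empty}+\DifferentiationIndex}\setminus\{\TranscendenceBasis,\SpecializationVariableSet\}$ does not vanish on any component of $\Variety{V}$, so the lifting fibre over a generic specialization $\TranscendenceBasis = \Variety{U}$ is a zero-dimensional reduced scheme whose degree equals $\deg(\Variety{V})$.

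Finally, I would invoke Schost's two-step procedure: Step (1) is the computation of a zero-dimensional geometric resolution of the fibre at $\TranscendenceBasis=\Variety{U}$, and Step (2) is the global Newton--Hensel lifting at precision $O(\deg(\Variety{V}))$ in $\DifferentialIdealOrder{\EquationSet}$ variables, whose cost is controlled by $\mathbf{M}_s$ multiplied by the evaluation cost of the system and of its Jacobian, i.e.\ by $\EvaluationComplexity+\mathtt{N}^{3}$ for each of the $\mathtt{N}=\DifferentialIdealOrder{\EquationSet}+(1+\DifferentiationIndex)\StateVariableSetCardinal$ coordinates. Substituting our SLP length and the dimensions of $\Variety{V}$ yields the stated bound. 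The main obstacle is purely bookkeeping: making sure that the parameters appearing in Schost's general statement specialize correctly to our $\mathtt{N}$, $\EvaluationComplexity$ and $\rho$, with no hidden dependence on quantities (such as the degree of the lifting fibre or the arithmetic genus of $\Variety{V}$) that are not already accounted for in our expression.
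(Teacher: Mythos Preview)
Your proposal is correct and follows essentially the same approach as the paper: the paper's justification (given in the paragraph immediately preceding the proposition rather than in a separate proof environment) consists precisely of constructing an SLP of length $O\bigl(((\DifferentialSystemOrder{\empty}+\DifferentiationIndex)\DifferentialSystemOrder{\empty}\StateVariableSetCardinal+\StraightLineProgramLenght)\DifferentiationIndex^{2}\bigr)$ for $\alltimederivative{\EquationSet}{\DifferentiationIndex}$ via \cite[Section 5.2]{matera:2003} or \cite[Lemma 21]{dalfonso:2006}, and then invoking \cite[Theorem 2]{schost:2003} on the specialized system with parametric set $\TranscendenceBasis$. Your verification of the hypotheses through Proposition~\ref{omega0} is more explicit than the paper's treatment but matches the preceding discussion in Section~\ref{res-geo}.
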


\begin{remark} It is easy to see that the variables $U$
can be chosen in such a way that for every variable $x_{j}^{(h)}\in U$, all its previous derivatives
$x_{j}^{(\ell)}$, for $0\le \ell\le h$, also belong to $U$. In this case
the representation given in Proposition \ref{a-la-Schost} allows us to obtain a characteristic set of the ideal for some ranking on
derivatives, using for instance the method described in \cite{dahan:2008}.
\end{remark}


We may also consider a particular kind of parametric geometric
resolution of the variety~$\Variety{V}$ that we call a \textit{Noether
  parametric geometric resolution} which requires an additional
condition: the natural
morphism~${\PolynomialRing{\BaseField}{\TranscendenceBasis} \to
  \PolynomialRing{\BaseField}{\Variety{V}}}$ must be not only
injective but also integral (i.e.\ it verifies Noether's Normalization
Lemma). This new requirement implies a more stable geometric behavior
which allows improvements in the algorithms performance.
\par
We are not able to ensure the existence of a set of
variables~$\TranscendenceBasis$ for which the variety~$\Variety{V}$ is
in Noether position, but this can be achieved by a
(generic)~$\Field{Q}$-linear change of coordinates (see for
instance~\cite{giusti:2001} and~\cite{heintz:2000}).
\par
Taking into account that the
polynomials~$\SpecializationOf{\alltimederivative{\EquationSet}{\DifferentiationIndex}}{\SpecializationPoint}$
form a reduced regular sequence, we can apply the algorithm presented
in~\cite{giusti:2001} to compute a Noether parametric geometric
resolution of the variety~$\Variety{V}$. This
leads to the following complexity result.

\begin{proposition} \label{a-la-Lecerf} Using the previous notations, assume also that the
  polynomials~$\EquationSet$ have degrees bounded by a positive
  integer~$d$.  Then, a Noether parametric geometric resolution
  of~$\Variety{V}$ can be computed over the field~$\BaseField$ by
  means of a probabilistic algorithm which runs in time
  \[
  O\left( \mathtt{N} \left(\mathtt{N}
      \EvaluationComplexity+\mathtt{N}^{\LinearAlgebraConstant}\right)
    \left(\mathbf{M}(\mathtt{D}d)^2 + \mathbf{M}(\mathtt{D})
      \sum_{\indexa=0}^{\lceil \log_2(\mathtt{D})\rceil}
      \mathbf{a}(2^{\indexa}) \right) \right)\!\!,
  \]
  where~$\mathtt{D}$ is the maximum of the degrees of the varieties
  successively defined by the
  polynomials~$\SpecializationOf{\alltimederivative{\EquationSet}{\DifferentiationIndex}}{\SpecializationPoint}$,
  and, for every positive integer~$\indexb$, $\mathbf{a}(\indexb)$ is
  the cost of the arithmetic operations in the
  quotient~$R/\mathfrak{m}^{\indexb}$, where $R$ is a polynomial
  ring with coefficients in~$\BaseField$
  in~$\DifferentialIdealOrder{\EquationSet}$ variables
  and~$\mathfrak{m}$ is the maximal ideal generated by all the
  variables ($\LinearAlgebraConstant$ denotes the linear algebra
  constant).\end{proposition}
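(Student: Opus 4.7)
The plan is to reduce to the Kronecker-type incremental resolution algorithm of Giusti, Heintz, Lecerf and collaborators \cite{giusti:2001}. Its hypotheses require that the ideal be generated by a reduced regular sequence and that the variety be in Noether position with respect to the chosen parametric variables. The first hypothesis is granted by Proposition~\ref{omega0}, which states that the polynomials $\SpecializationOf{\alltimederivative{\EquationSet}{\DifferentiationIndex}}{\SpecializationPoint}$ form a reduced regular sequence in $\ProlongatedRing{\DifferentialSystemOrder{\empty}+\DifferentiationIndex}$. The second hypothesis, which is not guaranteed by our choice of $\TranscendenceBasis$, is obtained after a generic $\Field{Q}$-linear change of coordinates (\cite{giusti:2001,heintz:2000}); this is the source of the probabilistic nature of the algorithm.

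With these conditions in place, the algorithm proceeds by processing the reduced regular sequence incrementally: for $k = 1, \ldots, (\DifferentiationIndex+1)\EquationSetCardinal$ it computes a parametric geometric resolution of the intermediate variety cut out by the first $k$ polynomials of the sequence. Each incremental step consists of a formal Newton--Hensel lifting of the resolution of the previous variety (seen as a power series deformation in the $\DifferentialIdealOrder{\EquationSet}$ parametric variables $\TranscendenceBasis$) followed by an intersection with the next hypersurface and a clean-up by resultant/gcd computations. The input polynomials are evaluated via a straight-line program: as in the derivation leading to Proposition~\ref{a-la-Schost}, the total derivatives $\alltimederivative{\EquationSet}{\DifferentiationIndex}$ can be encoded by a straight-line program of size $\EvaluationComplexity$, and the specialization of the variables $\SpecializationVariableSet$ at $\SpecializationPoint$ does not increase this size.

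The announced complexity is then a direct instantiation of the main complexity result of \cite{giusti:2001}. The factor $\mathtt{N}(\mathtt{N}\EvaluationComplexity + \mathtt{N}^{\LinearAlgebraConstant})$ accounts, at each of the $O(\mathtt{N})$ incremental steps, for one evaluation of the system and its Jacobian plus one linear algebra operation over the ground ring; the factor $\mathbf{M}(\mathtt{D}d)^2 + \mathbf{M}(\mathtt{D}) \sum_{\indexa=0}^{\lceil \log_2(\mathtt{D})\rceil} \mathbf{a}(2^{\indexa})$ reflects the cost of the Newton lifting performed at doubling precisions in the $\DifferentialIdealOrder{\EquationSet}$-variate power series ring over $\BaseField$, together with the univariate polynomial arithmetic in the lifted variable bounded by $\mathbf{M}(\mathtt{D}d)$. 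Here $\mathtt{D}$ controls uniformly the degrees of all intermediate varieties produced by the algorithm, as in the Bézout-type estimates of \cite{giusti:2001}.

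The main technical difficulty is the verification that after the generic linear change of coordinates the hypotheses of the Kronecker algorithm of \cite{giusti:2001} are preserved: namely, that the sequence remains a reduced regular sequence, that Noether position holds at every intermediate step of the incremental intersection, and that the lifting fibre chosen to initialize the algorithm is lucky. All three conditions are open conditions on the change of coordinates and on the initial point, so a standard Zariski-density argument combined with Schwartz--Zippel type estimates ensures that a random choice succeeds with controlled probability, which is precisely what is needed to justify the probabilistic complexity bound of the proposition.
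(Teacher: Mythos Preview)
Your proposal is correct and follows the same approach as the paper: the paper does not give a standalone proof of this proposition, but simply notes in the preceding paragraph that the polynomials $\SpecializationOf{\alltimederivative{\EquationSet}{\DifferentiationIndex}}{\SpecializationPoint}$ form a reduced regular sequence (by Proposition~\ref{omega0}), that a generic $\Field{Q}$-linear change of coordinates yields Noether position, and then invokes the algorithm of~\cite{giusti:2001} directly. Your sketch is in fact more detailed than the paper's justification, spelling out the incremental structure and the origin of each complexity factor, but the underlying argument is the same.
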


\subsection{Computing $\sigma$, $U$ and $W$}

Up to now, we have assumed the sets $\sigma$, $U$ and $W$ to be known \emph{a priori}.
This may often be the case for $U$ for obvious physical
reasons (for example, one does not need to compute the equations of a mechanical system such as the pendulum to
know which quantities could be arbitrarily chosen). Such an assumption is much harder to justify for $W$, but we will see that suitable sets $U$ and $W$ may be computed with little extra cost.

According to Theorem \ref{index} or Lemma \ref{stal-un-jour} in order to compute $\sigma$ it is enough to find the minimum $j_0$ such that $\ProlongatedIdeal{\indexb}\cap\ProlongatedRing{\DifferentialSystemOrder{\empty}}=
\ProlongatedIdeal{\indexb+1}\cap\ProlongatedRing{\DifferentialSystemOrder{\empty}}$ (then this minimum $j_0$ is $\sigma +1$). By the primality assumption of these ideals it suffices to compare their dimensions.
Following \cite[Proposition 2 and Remark 3]{dalfonso:2008} or \cite[Proposition 6]{dalfonso:2009} the dimension of the ideal $\ProlongatedIdeal{\indexb}\cap\ProlongatedRing{\DifferentialSystemOrder{\empty}}$ is equal to $(e-j+1)n +\rank(\partial F^{(r)}/\partial
X^{(h)})_{1\le r < j,\ e<h<e+j}$, the rank being computed modulo the
prime ideal $\ProlongatedIdeal{\indexb}$. Our algorithm computes successively the ranks for the ideals $\ProlongatedIdeal{\indexb}$ and it stops when two consecutive dimensions coincide.

To do this we apply the Kronecker algorithm described in
\cite{giusti:2001,DuLe:2006:cpkpsss}. For every $j$ it computes a Noether geometric resolution for the algebraic variety defined by
$\ProlongatedIdeal{\indexb}$. This geometric resolution allows us to
reduce the rank computation modulo $\ProlongatedIdeal{\indexb}$ to a
probabilistic rank computation modulo a principal ideal. Due to the
recursive structure of Kronecker algorithm, if the equality of the
dimensions does not hold, the geometric resolution already computed
can be taken as input for the next step.

Once the differential index is obtained, then we are able to compute the set of variables $U$ and $W$. This can be done by considering the Jacobian matrix $J_\sigma$ of the polynomials $F^{[\sigma]}$ with respect to the variables $X^{(e+\sigma)},\ldots ,\dot{X},X$ . After a Gauss triangulation of $J_\sigma$, the variables indexing columns with no pivot give a transcendence basis modulo $\ProlongatedIdeal{\DifferentiationIndex
    +1}$. The set $U$ corresponds to those variables of order at most $e-1$ and the set $W$ to the remaining ones

The complexity of this procedure is similar to the one of Proposition
\ref{a-la-Lecerf} for the adequate parameters, namely: the number of
variables and equations; the degrees of the intermediate varieties.

The previous computations might simplify the obtention of a parametric geometric resolution of the variety $\Variety{V}$.
This question, and its computational
interest, are left to further investigations.

\subsection{An associated vector field over the
  hypersurface}\label{vector_field}

In this section we define a vector field on the algebraic
hypersurface~$\Variety{H}$ defined in~$\AffineSpace{1+
  \DifferentialIdealOrder{\EquationSet}}$
by~${\{\PrimitiveElementMinimalPolynomial=0\}}$ and introduced in the
previous section. Moreover, we introduce the new first-order,
quasi-regular system~$\Output{\DifferentialSystem}$ having
differentiation index~$1$, whose solutions will enable us to obtain
solutions of the given system~$\DifferentialSystem$.
\par
Consider a parametric geometric resolution of the
variety~$\Variety{V}$ and
let \[{\left(\TranscendenceBasis,\PrimitiveElement,\PrimitiveElementMinimalPolynomial,\left\lbrace
      \tfrac{\GeometricResolutionFiber{\indexa}}{\PrimitiveElementMinimalPolynomial'}
      ,{1\le \indexa\le (1+\DifferentiationIndex)
        \StateVariableSetCardinal} \right\rbrace\right)}\] be the
parametric variables, the primitive element, its minimal square-free
polynomial and the parametrizations respectively, as in the previous
section.
\par
The linear map~${\Psi:
  \AffineSpace{\DifferentialIdealOrder{\EquationSet} + (1+
    \DifferentiationIndex ) \EquationSetCardinal} \to \AffineSpace{1+
    \DifferentialIdealOrder{\EquationSet}}}$ defined
as~${\alltimederivative{\StateVariableSet}
  {\DifferentialSystemOrder{\empty} + \DifferentiationIndex} \setminus
  \SpecializationVariableSet \rightarrow (\TranscendenceBasis,
  \PrimitiveElement)}$ (recall that~$\PrimitiveElement$ is a
$\Field{Q}$-linear combination of the
variables~${\alltimederivative{\StateVariableSet}
  {\DifferentialSystemOrder{\empty} + \DifferentiationIndex} \setminus
  \{\TranscendenceBasis,\SpecializationVariableSet\}}$) gives, by
restriction, a morphism of algebraic varieties between~$\Variety{V}$
and~$\Variety{H}$ and so, it induces a
dual~$\BaseField$-morphism~$\Psi^{\star}$ between the Artinian
rings~$\BaseField(\Variety{H})$ and~$\BaseField(\Variety{V})$. From
the properties satisfied by the geometric resolution, we have
that~$\Psi^{\star}$ is an isomorphism of~$\BaseField$-algebras and its
inverse morphism~$\Phi^{\star}$ is defined, by means of the
parametrizations, as the dual of the (rational, not necessarily
polynomial) morphism of algebraic varieties:~${\Phi:\Variety{H}
  \rightarrow \Variety{V}}$ defined by~${(\TranscendenceBasis,
  \PrimitiveElement) \rightarrow (\TranscendenceBasis, \tfrac
  {\GeometricResolutionFiber{\indexa}}{\PrimitiveElementMinimalPolynomial'}
  \vert {1\le \indexa \le (1+\DifferentiationIndex)
    \EquationSetCardinal})}$.  Let us observe that both ring
morphisms~$\Psi^{\star}$ and~$\Phi^{\star}$ fix the
variables~$\TranscendenceBasis$.
\par
Since the parametric set~$\TranscendenceBasis$ has been chosen as a
subset
of~$\alltimederivative{\StateVariableSet}{\DifferentialSystemOrder{\empty}-1}$,
the set~$\timederivative{\TranscendenceBasis}{1}$ of derivatives
of~$\TranscendenceBasis$ is included
in~$\alltimederivative{\StateVariableSet}{\DifferentialSystemOrder{\empty}}$
and so, by Proposition~\ref{UW}, the
relation~${\timederivative{\TranscendenceBasis}{1} \cap
  \SpecializationVariableSet =\emptyset}$ holds.  In
particular,~$\TranscendenceBasis$
and~$\timederivative{\TranscendenceBasis}{1}$ remain invariant after
specialization of the variables~$\SpecializationVariableSet$ at any
point~$\SpecializationPoint$
in~$\BaseField^{\SpecializationVariableSetCardinal}$.
\par
Fix a
variable~$\timederivative{\TranscendenceBasisElement{\indexa}}{1}$ of
the set~$\timederivative{\TranscendenceBasis}{1}$ (${1\le
  \indexa \le \DifferentialIdealOrder{\EquationSet}}$).  We have:
\begin{enumerate}
\item[(a)]
  If~$\timederivative{\TranscendenceBasisElement{\indexa}}{1}$ is
  in~$\TranscendenceBasis$, there exists a unique integer~$\indexc$
  such that~${1\le \indexc \le \DifferentialIdealOrder{\EquationSet}}$
  and~${\timederivative{\TranscendenceBasisElement{\indexa}}{1}=\TranscendenceBasisElement{\indexc}}$.
\item[(b)]
  If~$\timederivative{\TranscendenceBasisElement{\indexa}}{1}$ is not
  in~$\TranscendenceBasis$, there exists a unique index~$\indexb$ such
  that~${1\le \indexb \le (1+\DifferentiationIndex)
    \StateVariableSetCardinal}$ and:
  \[
  \Phi^{\star}(\timederivative{\TranscendenceBasisElement{\indexa}}{1})=
  \dfrac{ \GeometricResolutionFiber{\indexb}}
  {\PrimitiveElementMinimalPolynomial'}(\TranscendenceBasis,\PrimitiveElement
  ) = \dfrac{1}{b_{\indexb}(\TranscendenceBasis)}
  \dfrac{a_{\indexb}}{\PrimitiveElementMinimalPolynomial'}(\TranscendenceBasis,\PrimitiveElement).  \]
\end{enumerate}

\begin{definition} \label{sigma1} Let~$\Output{\DifferentialSystem}$
  be the square \DifferentialAlgebraicSystem\ system in
  the~${\DifferentialIdealOrder{\EquationSet}+1}$ differential
  unknowns~$\TranscendenceBasis,\PrimitiveElement$:
  \[
  \Output{\DifferentialSystem} := \left\{
    \begin{array} [c]{cccl}%
      \timederivative{\TranscendenceBasisElement{\indexa}}{1}-
      \TranscendenceBasisElement{\indexc} &=&0,
      &\quad   \textrm{for all}\  \timederivative{\TranscendenceBasisElement{\indexa}}{1}
      \ \textrm{verifying condition~(a)}
      \\[\medskipamount]
      b_{\indexb}(\TranscendenceBasis)\PrimitiveElementMinimalPolynomial'(\TranscendenceBasis,\PrimitiveElement)
      \timederivative{\TranscendenceBasisElement{\indexa}}{1} - {a_{\indexb} (\TranscendenceBasis,\PrimitiveElement)}&=& 0,
      &\quad \textrm{for all}\   \timederivative{\TranscendenceBasisElement{\indexa}}{1} \ \textrm{verifying condition~(b)}
      \\[\medskipamount]
      \PrimitiveElementMinimalPolynomial(\TranscendenceBasis,\PrimitiveElement)&=&0.
    \end{array}
  \right.
  \]
  We denote by~$\Output{\EquationSet}:={\Output{\TheEquation{1}}, \ldots,
    \Output{\TheEquation{1+\DifferentialIdealOrder{\EquationSet}}}}$
  the polynomials
  in~$\PolynomialRing{\BaseField}{\TranscendenceBasis,\timederivative{\TranscendenceBasis}{1},\PrimitiveElement}$
  defining the system~$\Output{\DifferentialSystem}$ and
  by~$\DifferentialIdeal{\Output{\EquationSet}}$ the differential
  ideal generated by them
  in~$\DifferentialPolynomialRing{\BaseField}{\TranscendenceBasis,\PrimitiveElement}$.
\end{definition}

Note that~$\Output{\DifferentialSystem}$ is a \emph{semi-explicit
  \DifferentialAlgebraicSystem\ system}, i.e.\ an explicit
under-determined~\textsc{ode}, consisting of its
first~$\DifferentialIdealOrder{\EquationSet}$ many equations, plus a
purely algebraic equation, given by the square-free
polynomial~$\PrimitiveElementMinimalPolynomial$.
\par
Let~${\PrimitiveElementMinimalPolynomial=\PrimitiveElementMinimalPolynomial_{1}\cdots
  \PrimitiveElementMinimalPolynomial_{r}}$ be the decomposition
of~$\PrimitiveElementMinimalPolynomial$ as a product of irreducible
factors in the polynomial
ring~$\PolynomialRing{\BaseField}{\TranscendenceBasis,\PrimitiveElement}$. Since
the variables~$\TranscendenceBasis$ are algebraically independent
modulo the ideal ~$(\PrimitiveElementMinimalPolynomial)\subset \PolynomialRing{\BaseField}{\TranscendenceBasis,\PrimitiveElement}$, we
have~${\deg_{\PrimitiveElement}
  \PrimitiveElementMinimalPolynomial_{\indexa}>0}$ for all
integer~$\indexa$. For each
factor~$\PrimitiveElementMinimalPolynomial_{\indexa}$
let~$\mathfrak{p}_i$ be the
ideal~$\DifferentialIdeal{\Output{\TheEquation{1}},\ldots,
  \Output{\TheEquation{\DifferentialIdealOrder{\EquationSet}}},
  \PrimitiveElementMinimalPolynomial_{\indexa}}$
in~$\DifferentialPolynomialRing{\BaseField}{\TranscendenceBasis,\PrimitiveElement}$.
\begin{proposition}
  \label{gamma}
  The ideal~$\DifferentialIdeal{\Output{\EquationSet}}$ is a radical
  quasi-regular differential ideal
  in~$\DifferentialPolynomialRing{\BaseField}{\TranscendenceBasis,\PrimitiveElement}$
  and its minimal primes
  are~$\mathfrak{p}_1,\ldots,\mathfrak{p}_r$.
\end{proposition}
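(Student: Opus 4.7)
The plan is to exploit the semi-explicit structure of $\underline{\Sigma}$: the first $\ord[F]$ equations express the leaders $\dot u_a$ (up to multiplication by $1$ or $b_j q'$), while $q=0$ is purely algebraic in $(U,v)$. Fix the orderly ranking on $\mathbb{K}\{U,v\}$ with $u_1<\cdots<u_{\ord[F]}<v$. For each irreducible factor $q_i$ of $q$, the set $\mathcal{A}_i:=\{\underline f_1,\ldots,\underline f_{\ord[F]},q_i\}$ is autoreduced (distinct leaders $\dot u_1,\ldots,\dot u_{\ord[F]},v$, with derivatives not reappearing in other elements) and trivially coherent. Its initials and separants lie in $\mathbb{K}[U,v]$ and none belongs to the algebraic prime $(q_i)$: the $b_j$ lie in $\mathbb{K}[U]$ with $U$ algebraically free over $\mathbb{K}$; $q$ is squarefree, so $q'\equiv q_i'\prod_{k\neq i}q_k\not\equiv 0\pmod{q_i}$; and $q_i'\notin(q_i)$ in characteristic $0$.

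To establish primality of each $\mathfrak{p}_i$, I would construct the generic integral curve explicitly. On $\mathbb{K}[U,v]/(q_i)$ localized at $q_i'\cdot\prod_jb_j$, define a derivation $D$ by $D(u_a)=u_c$ or $D(u_a)=a_j/(b_jq')$ according to type (a) or (b), and $D(v)=-(1/q')\sum_a(\partial q/\partial u_a)D(u_a)$. Using $q=q_i\prod_{k\neq i}q_k$, a direct computation gives $D(q_i)\equiv 0\pmod{q_i}$, so $D$ extends to a derivation on $L_i:=\mathrm{Frac}(\mathbb{K}[U,v]/(q_i))$. The induced differential $\mathbb{K}$-morphism $\phi_i:\mathbb{K}\{U,v\}\to L_i$ has prime kernel visibly containing $\mathfrak{p}_i$. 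For the reverse inclusion I would invoke Rosenfeld's lemma: since $\mathcal{A}_i$ is coherent and autoreduced with initial/separant product $H$ regular modulo the algebraic prime defined by $\mathcal{A}_i$ after saturation, the ideal $[\mathcal{A}_i]:H^\infty$ is prime with characteristic set $\mathcal{A}_i$, and pseudo-reduction places $\ker\phi_i$ inside it; a final check that $H$ is nonzerodivisor modulo $[\mathcal{A}_i]$ itself (using the linearity of $\underline f_a$ in its leader) yields $\mathfrak{p}_i=[\mathcal{A}_i]:H^\infty=\ker\phi_i$.

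Granted primality, the decomposition $[\underline F]=\bigcap_i\mathfrak{p}_i$ follows from $q=\prod_iq_i$: the inclusion $\subseteq$ is immediate, and conversely any $f\in\bigcap_i\mathfrak{p}_i$ reduces modulo $[\underline F]$ (after multiplication by a product of initials not in any $\mathfrak{p}_i$) to $\tilde f\in\mathbb{K}[U,v]$ lying in $(q_i)$ for every $i$, hence divisible by $q$ by squarefreeness, hence in $[\underline F]$. Being a finite intersection of pairwise incomparable primes (their algebraic parts $(q_i)$ are distinct), $[\underline F]$ is radical and the $\mathfrak{p}_i$ are exactly its minimal primes. For quasi-regularity I examine, for each $j\ge 0$, the Jacobian of $\underline F^{[j]}$ with respect to the $(j+1)(\ord[F]+1)$ variables $\{u_a^{(k+1)}:1\le a\le\ord[F],\,0\le k\le j\}\cup\{v^{(k)}:0\le k\le j\}$. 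Grouping rows into blocks $(\underline f_1^{(k)},\ldots,\underline f_{\ord[F]}^{(k)},q^{(k)})$ and columns into blocks $(u_1^{(k+1)},\ldots,u_{\ord[F]}^{(k+1)},v^{(k)})$ for $k=0,\ldots,j$, the matrix is block upper triangular (the entries at row-block $k$, column-block $k'>k$ vanish by order considerations), and each diagonal block is itself upper triangular, with diagonal entries $\partial\underline f_a^{(k)}/\partial u_a^{(k+1)}\in\{1,b_jq'\}$ and $\partial q^{(k)}/\partial v^{(k)}=q'$. The determinant is therefore a product of $q'$ and $b_jq'$ factors, nonzero modulo every $\mathfrak{p}_i$.

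The principal obstacle lies in the second half of Step~1: showing $\ker\phi_i\subseteq\mathfrak{p}_i$ rather than only the potentially larger saturation $[\mathcal{A}_i]:H^\infty$. What makes this tractable is exactly the algebraic structure isolated above—the squarefreeness of $q_i$ which forces $q_i',q'\notin(q_i)$, the $U$-only dependence of the $b_j$, and the linearity of each $\underline f_a$ in its leader $\dot u_a$—so that the characteristic-set pseudo-reduction terminates in $\mathbb{K}[U,v]$ without enlarging the ideal. These are the same ingredients that power the proof of Lemma~\ref{stal-un-jour}, and I would rely on that circle of techniques to complete the argument.
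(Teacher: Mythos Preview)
Your outline is workable but takes a considerably heavier route than the paper, and the step you yourself flag as the ``principal obstacle'' is exactly the one the paper's argument dissolves for free.

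The paper's key observation is purely ring-theoretic: the differential equations $\underline f_1,\ldots,\underline f_{\ord[F]}$ let one rewrite every derivative $u_a^{(k)}$ and $v^{(k)}$ as an element of the localization $\bigl(\mathbb{K}[U,v]/(q)\bigr)_{Bq'}$, where $B=\prod_j b_j$. Hence the natural map
\[
\mathbb{K}\{U,v\}/[\underline F]\;\hookrightarrow\;\bigl(\mathbb{K}[U,v]/(q)\bigr)_{Bq'}
\]
is injective. Since $q$ is squarefree the target is reduced, so $[\underline F]$ is radical immediately---no characteristic-set reduction, no Rosenfeld saturation. The same embedding with $q_i$ in place of $q$ lands in a domain (because $q_i$ is irreducible), giving primality of $\mathfrak p_i$ in one line. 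This is precisely the statement $\mathfrak p_i=[\mathcal A_i]=[\mathcal A_i]{:}H^\infty$ that you were trying to reach via Rosenfeld's lemma: the embedding shows $H=Bq'$ is a nonzerodivisor modulo $[\mathcal A_i]$ directly.

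For the minimal-prime decomposition the paper again avoids pseudo-reduction. Having already shown $[\underline F]$ radical, it fixes an arbitrary minimal prime $\mathfrak p\supseteq[\underline F]$, observes that exactly one factor $q_{i_0}$ lies in $\mathfrak p$ (B\'ezout in $\mathbb{K}(U)[v]$, using that $\mathfrak p\cap\mathbb{K}[U]=0$), and then proves by induction on $j$ that $q_{i_0}^{(j)}\in\mathfrak p$ via the Leibniz expansion of $q^{(j+1)}\in[\underline F]\subseteq\mathfrak p$. This gives $\mathfrak p_{i_0}\subseteq\mathfrak p$, hence equality.

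Your Jacobian computation for quasi-regularity is correct and more explicit than the paper's one-sentence dismissal; note only that with your block ordering the matrix is block \emph{lower} triangular (entries above the diagonal blocks vanish), not upper---a harmless slip since the conclusion about the determinant is unaffected.

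In short: your machinery would get there, but the embedding $\mathbb{K}\{U,v\}/[\underline F]\hookrightarrow(\mathbb{K}[U,v]/(q))_{Bq'}$ is the structural insight that makes all three claims (radicality, primality, minimal-prime list) fall out with almost no work, and it is worth internalizing.
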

\begin{proof}
  Let us define~${B(\TranscendenceBasis):=\prod
    b_j(\TranscendenceBasis)}$. From the particular form of the
  polynomials~$\Output{\EquationSet}$ we observe that the
  ring~$\DifferentialPolynomialRing{\BaseField}{\TranscendenceBasis,\PrimitiveElement}
  /\DifferentialIdeal{\Output{\EquationSet}}$ is isomorphic to a
  subring of the localization
  of~$\PolynomialRing{\BaseField}{\TranscendenceBasis,
    \PrimitiveElement}/(\PrimitiveElementMinimalPolynomial)$ at the
  polynomial~$B\PrimitiveElementMinimalPolynomial'$. Since~$\PrimitiveElementMinimalPolynomial$
  is assumed square-free, the
  ring~$\PolynomialRing{\BaseField}{\TranscendenceBasis,
    \PrimitiveElement}/(\PrimitiveElementMinimalPolynomial)$ has no
  non-zero nilpotent elements and the same property remains true for
  any localization of it. Therefore the
  ideal~$\DifferentialIdeal{\Output{\EquationSet}}$ is
  radical. Similarly, each
  ring~$\DifferentialPolynomialRing{\BaseField}{\TranscendenceBasis,
    \PrimitiveElement}/\mathfrak{p}_i$ is isomorphic to a
  subring of the localization
  of~$\PolynomialRing{\BaseField}{\TranscendenceBasis,
    \PrimitiveElement}/(\PrimitiveElementMinimalPolynomial_{\indexa})$
  at~$B\PrimitiveElementMinimalPolynomial'$, which is a domain
  since~$\PrimitiveElementMinimalPolynomial'_{\indexa}$ is irreducible
  in~$\PolynomialRing{\BaseField}{\TranscendenceBasis,\PrimitiveElement}
  $; thus the ideals~$\mathfrak{p}_i$ are prime.
  \par
  From the previous argument, we observe that the canonical
  map~${\PolynomialRing{\BaseField}{\TranscendenceBasis} \rightarrow
    \DifferentialPolynomialRing{\BaseField}{\TranscendenceBasis,
      \PrimitiveElement}/\DifferentialIdeal{\Output{\EquationSet}}}$
  is injective and no polynomial
  in~$\PolynomialRing{\BaseField}{\TranscendenceBasis}$ is a zero
  divisor of the
  ring~$\DifferentialPolynomialRing{\BaseField}{\TranscendenceBasis,
    \PrimitiveElement}/\DifferentialIdeal{\Output{\EquationSet}}$. Since~$\DifferentialIdeal{\Output{\EquationSet}}$
  is a radical ideal, it has only finitely many minimal (hence
  associated) prime ideals (see~\cite[Chapter 1, \S16]{ritt:1950}) and
  so, none of these minimal primes contains a non-zero polynomial
  in~$\PolynomialRing{\BaseField}{\TranscendenceBasis}$.
  \par
  Fix a minimal prime~$\mathfrak{p}$
  of~$\DifferentialIdeal{\Output{\EquationSet}}$. Since~$\PrimitiveElementMinimalPolynomial$
  is an element of~$\DifferentialIdeal{\Output{\EquationSet}}\subseteq\mathfrak{p}$, there is an irreducible
  factor~$\PrimitiveElementMinimalPolynomial_{i_0}$
  of~$\PrimitiveElementMinimalPolynomial$ lying in the
  ideal~$\mathfrak{p}$. Moreover, exactly one of these irreducible
  factors belongs to~$\mathfrak{p}$ since they are pairwise coprime
  in~$\PolynomialRing{\BaseField(\TranscendenceBasis)}{\PrimitiveElement}$
  (otherwise, B\'ezout's Identity would imply the existence of a
  non-zero polynomial in~${\mathfrak{p} \cap
    \PolynomialRing{\BaseField}{\TranscendenceBasis}}$, leading to a
  contradiction). We will show now that~$\mathfrak{p}_{i_0}$ is
  included in~$\mathfrak{p}$.
  \par
  Since the total successive derivatives of the
  polynomials~${\Output{\TheEquation{1}}, \ldots,
    \Output{\TheEquation{\DifferentialIdealOrder{\EquationSet}}}}$
  belong to~$\mathfrak{p}$
  and~$\DifferentialIdeal{\Output{\EquationSet}}$ is a subset
  of~$\mathfrak{p}$, it suffices to prove that, for all
  integer~$\indexb$, the total
  derivatives~$\timederivative{\PrimitiveElementMinimalPolynomial_{i_0}}{\indexb}$
  belong to~$\mathfrak{p}$ (note that~$\mathfrak{p}$ is not
  \textit{a priori} necessarily a differential ideal). This can be
  done by recursion in~$\indexb$. For~${\indexb=0}$ there is nothing
  to prove. Otherwise, for~${\indexb\ge 0}$,
  we have
  that~${\timederivative{\PrimitiveElementMinimalPolynomial}{\indexb+1}=\sum
    _{|\indexc|=\indexb+1}
    \tfrac{(\indexb+1)!}{\indexc_{1}!\ldots\indexc_{r}!}\,
    \timederivative{\PrimitiveElementMinimalPolynomial_{1}}{\indexc_{1}}\ldots
    \timederivative{\PrimitiveElementMinimalPolynomial_{r}}{\indexc_{r}}}$
  is in~${\DifferentialIdeal{\Output{\EquationSet}} \subseteq
    \mathfrak{p}}$, which implies by induction hypothesis that the
  expression~${\timederivative{\PrimitiveElementMinimalPolynomial_{i_0}}{\indexb+1}\prod_{i\ne
      i_0} \PrimitiveElementMinimalPolynomial_{i}}$ is
  in~$\mathfrak{p}$. Since~$\prod_{i\ne i_0}
  \PrimitiveElementMinimalPolynomial_i$ is not in~$\mathfrak{p}$, we
  conclude
  that~$\timederivative{\PrimitiveElementMinimalPolynomial_{i_0}}{\indexb+1}$
  is in~$\mathfrak{p}$.
  \par
  Again, from the special form of the
  polynomials~$\Output{\EquationSet}$, it is easy to see that the
  system is quasi-regular at each minimal prime differential
  ideal~$\mathfrak{p}_i$ and
  then,~$\DifferentialIdeal{\Output{\EquationSet}}$ is quasi-regular
  (see Definition~\ref{quasirgen}).
\end{proof}
The previous proposition ensures that the hypotheses
of~\cite[Section~2]{dalfonso:2009} are fulfilled. Hence, all the
considerations concerning the differentiation index, the order and the
Hilbert-Kolchin regularity explained there can be applied to our new
\DifferentialAlgebraicSystem\
system~$\Output{\DifferentialSystem}$. In particular we can compute
the differentiation index of~$\Output{\DifferentialSystem}$ at each
minimal prime~$\mathfrak{p}$ as
in~\cite[Section~3.1]{dalfonso:2009}:
\begin{proposition}
  \label{index1}
  Let~$\mathfrak{p}$ be a minimal prime differential
  ideal containing~$\DifferentialIdeal{\Output{\EquationSet}}$. Then
  the \DifferentialAlgebraicSystem\
  system~$\Output{\DifferentialSystem}$
  has~$\mathfrak{p}$-differentiation index~$1$.
\end{proposition}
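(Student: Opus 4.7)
My plan is to verify the defining property of the $\mathfrak{p}$-differentiation index directly. Let $\PrimitiveElementMinimalPolynomial_i$ be the irreducible factor of $\PrimitiveElementMinimalPolynomial$ such that $\mathfrak{p} = \mathfrak{p}_i$ in the notation of Proposition~\ref{gamma}. The prime system $\{\Output{\TheEquation{1}},\ldots,\Output{\TheEquation{\DifferentialIdealOrder{\EquationSet}}},\PrimitiveElementMinimalPolynomial_i\}$ obtained from $\Output{\EquationSet}$ by replacing $\PrimitiveElementMinimalPolynomial$ with $\PrimitiveElementMinimalPolynomial_i$ has maximal derivation order $1$, is quasi-regular by Proposition~\ref{gamma}, and has associated differential ideal $\mathfrak{p}_i$. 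Hence the $\mathfrak{p}_i$-differentiation index is the minimum integer $\indexb$ such that the ideal $\mathsf{J}_\indexb$ generated in $\ProlongatedRing{\indexb}$ by the prolongations up to order $\indexb - 1$ of this system contracts to $\mathfrak{p}_i \cap \PolynomialRing{\BaseField}{\TranscendenceBasis,\PrimitiveElement}$.

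The identification $\mathfrak{p}_i \cap \PolynomialRing{\BaseField}{\TranscendenceBasis,\PrimitiveElement} = (\PrimitiveElementMinimalPolynomial_i)$ is essentially contained in the proof of Proposition~\ref{gamma}, where $\DifferentialPolynomialRing{\BaseField}{\TranscendenceBasis,\PrimitiveElement}/\mathfrak{p}_i$ is exhibited as a subring of a localization of the domain $\PolynomialRing{\BaseField}{\TranscendenceBasis,\PrimitiveElement}/(\PrimitiveElementMinimalPolynomial_i)$; consequently the natural map out of that domain is injective. Since $\PrimitiveElementMinimalPolynomial_i$ is non-zero and $\mathsf{J}_0 = (0)$, the index is at least $1$.

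For the upper bound, $\mathsf{J}_1$ is generated in $\PolynomialRing{\BaseField}{\TranscendenceBasis,\PrimitiveElement,\timederivative{\TranscendenceBasis}{1},\timederivative{\PrimitiveElement}{1}}$ by $\PrimitiveElementMinimalPolynomial_i$ together with the $\DifferentialIdealOrder{\EquationSet}$ linear polynomials $L_\indexa\timederivative{\TranscendenceBasisElement{\indexa}}{1} - M_\indexa$ of Definition~\ref{sigma1}, with $L_\indexa, M_\indexa \in \PolynomialRing{\BaseField}{\TranscendenceBasis,\PrimitiveElement}$. The key technical observation is that every $L_\indexa$ lies outside the prime $(\PrimitiveElementMinimalPolynomial_i)$: in case~(a), $L_\indexa = 1$; in case~(b), $L_\indexa = b_\indexb(\TranscendenceBasis)\PrimitiveElementMinimalPolynomial'$, where $b_\indexb$ is non-zero modulo $\PrimitiveElementMinimalPolynomial_i$ (since $\PrimitiveElementMinimalPolynomial_i$ has positive degree in $\PrimitiveElement$ and $b_\indexb$ involves only $\TranscendenceBasis$), and $\PrimitiveElementMinimalPolynomial'$ is coprime to $\PrimitiveElementMinimalPolynomial_i$ by squarefreeness of $\PrimitiveElementMinimalPolynomial$ and irreducibility of $\PrimitiveElementMinimalPolynomial_i$ in characteristic zero. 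Setting $B := \prod_\indexa L_\indexa \notin (\PrimitiveElementMinimalPolynomial_i)$, localization at $B$ turns each $L_\indexa\timederivative{\TranscendenceBasisElement{\indexa}}{1} - M_\indexa$ into the pure elimination relation $\timederivative{\TranscendenceBasisElement{\indexa}}{1} = M_\indexa/L_\indexa$, so the contraction of the localized $\mathsf{J}_1$ to $\PolynomialRing{\BaseField}{\TranscendenceBasis,\PrimitiveElement}[B^{-1}]$ reduces to $(\PrimitiveElementMinimalPolynomial_i)\PolynomialRing{\BaseField}{\TranscendenceBasis,\PrimitiveElement}[B^{-1}]$. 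For $h \in \mathsf{J}_1 \cap \PolynomialRing{\BaseField}{\TranscendenceBasis,\PrimitiveElement}$, this yields $B^k h \in (\PrimitiveElementMinimalPolynomial_i)$ for some $k$; primality of $(\PrimitiveElementMinimalPolynomial_i)$ together with $B \notin (\PrimitiveElementMinimalPolynomial_i)$ forces $h \in (\PrimitiveElementMinimalPolynomial_i)$. The only truly delicate step is this localization-elimination argument; the rest is bookkeeping on the very transparent structure of $\Output{\DifferentialSystem}$.
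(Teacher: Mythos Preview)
Your argument is correct but follows a different route from the paper's. The paper works with the Jacobian characterisation of the index from \cite{dalfonso:2009}: it writes down the matrices $\mathfrak{J}_1$ and $\mathfrak{J}_2$ for the \emph{original} system $\Output{\DifferentialSystem}$ (with last equation $q$, not $q_i$), computes their ranks over $\BaseField(\mathfrak{p}_i)$ using the block-diagonal structure, and checks that $\dim\ker{}^t\mathfrak{J}_1=\dim\ker{}^t\mathfrak{J}_2=1$, whence the index is $1$. You instead pass to the prime component system $\{\Output{f_1},\dots,\Output{f_{\ord[F]}},q_i\}$ and verify the ideal-theoretic definition directly, using the elegant localisation--elimination step to compute $\mathsf{J}_1\cap\BaseField[U,\upsilon]=(q_i)$. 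Your route is more elementary (no external rank criterion needed) and makes the stabilisation of the chain completely explicit; the paper's route has the advantage of staying with the literal system $\Output{\DifferentialSystem}$ throughout.

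One point deserves a sentence of justification: you silently identify the $\mathfrak{p}_i$-differentiation index of $\Output{\DifferentialSystem}$ (the quantity named in the Proposition, defined in \cite{dalfonso:2009} via Jacobians of the system with $q$) with the ordinary differentiation index of the component system with $q_i$. This is true, because writing $q=q_i\cdot r$ with $r=\prod_{j\ne i}q_j$ a unit modulo $\mathfrak{p}_i$, every prolongation $q^{(k)}$ is congruent to $r\cdot q_i^{(k)}$ modulo $(q_i,\dots,q_i^{(k-1)})$ over $\BaseField(\mathfrak{p}_i)$, so all the relevant Jacobian ranks---and hence both notions of index---coincide. Adding that remark closes the only gap.
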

\begin{proof}
  Following Proposition~\ref{gamma},
  let~$\PrimitiveElementMinimalPolynomial_{\indexa}$
  in~$\PolynomialRing{\BaseField}{\TranscendenceBasis,\PrimitiveElement}$
  be an irreducible factor of~$\PrimitiveElementMinimalPolynomial$
  such
  that~${\mathfrak{p}_{\indexa}=\DifferentialIdeal{\Output{\TheEquation{1}},\ldots,
      \Output{\TheEquation{\DifferentialIdealOrder{\EquationSet}}
      },\PrimitiveElementMinimalPolynomial_{\indexa}}}$. Then, the
  fraction field~$\BaseField(\mathfrak{p}_{\indexa})$
  of~$\DifferentialPolynomialRing{\BaseField}{\TranscendenceBasis,\PrimitiveElement}/\mathfrak{p}_{\indexa}$
  can be identified in a natural way with the fraction field of the
  domain~$\PolynomialRing{\BaseField}{\TranscendenceBasis,
    \PrimitiveElement}/(\PrimitiveElementMinimalPolynomial_{\indexa})$;
  in particular~$\PolynomialRing{\BaseField}{\TranscendenceBasis}$ may
  be regarded as a subring of~$\BaseField(\mathfrak{p}_{\indexa})$.
  \par
  Let~$\mathfrak{J}_{1}$ be the square Jacobian matrix of the
  polynomials~$ \Output{\TheEquation{1}},
    \ldots,
    \Output{\TheEquation{\DifferentialIdealOrder{\EquationSet}+1}}$
  with respect to the
  variables~$\timederivative{\TranscendenceBasis}{1}$
  and~$\timederivative{\PrimitiveElement}{1}$. Then~$\mathfrak{J}_1$
  is the diagonal matrix:
  \[
  \mathfrak{J}_1=\left(
    \begin{tabular}{lll}
      \cline{1-1}
      \multicolumn{1}{|l}{Id} & \multicolumn{1}{|l}{$\cdots $} & $0$ \\ \cline{1-2}
      $\vdots $ & \multicolumn{1}{|l}{$\ C$} & \multicolumn{1}{|l}{$%
        \vdots $} \\ \cline{2-2}
      $0$ & $\cdots $ & $0$%
    \end{tabular}%
  \right)\!,
  \]
  where~$C$ is a diagonal matrix with the
  elements~$b_{\indexb}(\TranscendenceBasis)\PrimitiveElementMinimalPolynomial'$
  in the
  diagonal. Since~$b_{\indexb}(\TranscendenceBasis)\PrimitiveElementMinimalPolynomial'$
  is non zero in the
  domain~$\PolynomialRing{\BaseField}{\TranscendenceBasis,
    \PrimitiveElement}/(\PrimitiveElementMinimalPolynomial_{\indexa})$
  for all index~$\indexb$, we deduce that~$\mathfrak{J}_1$ has
  rank~$\DifferentialIdealOrder{\EquationSet}$ over the
  field~$\BaseField(\mathfrak{p}_{\indexa})$.
  \par
  Consider now~$\mathfrak{J}_2$ the Jacobian matrix of
  the~${2(\DifferentialIdealOrder{\EquationSet}+1)}$ many
  polynomials~${\Output{\EquationSet},\timederivative{\Output{\EquationSet}}{1}}$
  with respect to the~${2(\DifferentialIdealOrder{\EquationSet}+1)}$
  many variables~${\timederivative{\TranscendenceBasis}{1},
    \timederivative{\PrimitiveElement}{1},
    \timederivative{\TranscendenceBasis}{2},
    \timederivative{\PrimitiveElement}{2}}$. We have:
  \[
  \mathfrak{J}_2=\left(
    \begin{tabular} [c]{llllll}\cline{1-3}%
      \multicolumn{1}{|l}{\empty} &  & \multicolumn{1}{l|}{\empty} &  &  & \\
      \multicolumn{1}{|l}{\empty} & $\mathfrak{J}_1$ &  \multicolumn{1}{l|}{\empty} &   & $\ \ 0$ & \\
      \multicolumn{1}{|l}{\empty} &  & \multicolumn{1}{l|}{\empty} &  &  & \\\hline
      $\ast$ & $\cdots$ & $\ast$ & \multicolumn{1}{|l}{\empty} &  & \multicolumn{1}{l|}{\empty}%
      \\
      $\vdots$ &  & $\vdots$ & \multicolumn{1}{|l}{\empty} & $\ \mathfrak{J}_1$& \multicolumn{1}{l|}{\empty}%
      \\\cline{1-3}%
      \multicolumn{1}{|l}{$\ast$} & $\cdots$ &
      \multicolumn{1}{l|}{$\PrimitiveElementMinimalPolynomial^{\prime}$} &
      &  & \multicolumn{1}{l|}{$\ \ $}\\\hline
    \end{tabular}
  \right).\] Therefore the rank of~$\mathfrak{J}_2$
  over~$\BaseField(\mathfrak{p}_{\indexa})$ is equal
  to~${2\,\DifferentialIdealOrder{\EquationSet}+1}$ (recall that the
  last row and column of~$\mathfrak{J}_1$ are both~$0$).
  \par
  Then, the
  relation~${\dim_{\BaseField(\mathfrak{p}_{\indexa})}\ker\TransposedMatrix{\mathfrak{J}_{1}}
    =
    {1+\DifferentialIdealOrder{\EquationSet}}-\textrm{rank}_{\BaseField(\mathfrak{p}_{\indexa})}
    (\mathfrak{J}_1)=1}$ hold and thus, the
  equality~${\dim_{\BaseField(\mathfrak{p}_{\indexa})}\ker\TransposedMatrix{\mathfrak{J}_{2}}
    =
    2(1+\DifferentialIdealOrder{\EquationSet})-\textrm{rank}_{\BaseField(\mathfrak{p}_{\indexa})}
    (\mathfrak{J}_2)=1}$ hold. Thus, from~\cite[Definitions~5 \&~9 and
  Theorem~8]{dalfonso:2009} we conclude that the
  $\mathfrak{p}_{\indexa}$-differentiation index of the
  system~$\Output{\DifferentialSystem}$ equals~$1$.
\end{proof}
In order to illustrate the notions above, let us now consider the
classical pendulum example.
\begin{example} {\rm Let $\DifferentialSystem$ be the \DifferentialAlgebraicSystem\ system
     arising from a variational problem
    describing the motion of a pendulum of length~$1$, where~$g$ is
    the gravitational constant and the unknown~$\lambda$ is a Lagrange
    multiplier: \begin{equation}\label{pendulo}
      {\DifferentialSystem}=\quad \left\{ \begin{array} [c]{ccl}%
          \timederivative{x}{2}-\lambda x&=& 0, \\
          \timederivative{y}{2}-\lambda y+g&=& 0,\\
          x^{2}+y^{2}-1 &=& 0. \\
        \end{array} \right.  \end{equation} Using the
    notation~${\EquationSet :=\{ \timederivative{x}{2}-\lambda x,
      \timederivative{y}{2}-\lambda y+g, x^{2}+y^{2}-1 \} }$ to denote
    the system defining~$\DifferentialSystem$, we consider in the
    sequel the differential
    ideal~$\DifferentialIdeal{\EquationSet}$ in the differential
    ring~$\DifferentialPolynomialRing{\Field{R}}{x,y,\lambda}$.  \par
    The differentiation index of~$\DifferentialSystem$
    is~${\DifferentiationIndex = 4}$, as shown in~\cite[Example
    2]{dalfonso:2009}. We have, \begin{eqnarray*}
      \DifferentialIdeal{\EquationSet} \cap \ProlongatedRing{1} &=& \ProlongatedIdeal{4} \cap \ProlongatedRing{1} \\
      &=& (x^{2} +y^{2} -1,\, \timederivative{y}{1} x^{2} - y
      x\timederivative{x}{1} - \timederivative{y}{1},\, x
      \timederivative{x}{1} + y \timederivative{y}{1} ,\,
      \timederivative{x}{1}^{2} + \timederivative{y}{1}^{2} - y g +
      \lambda,\, \timederivative{\lambda}{1} - 3 \timederivative{y}{1}
      g) \end{eqnarray*} and~${\TranscendenceBasis := \{ x ,
      \timederivative{x}{1}\}}$ is a common transcendence basis of the
    fields of fractions
    of~${\ProlongatedRing{1}/(\DifferentialIdeal{\EquationSet}
      \cap \ProlongatedRing{1})}$
    and~${\ProlongatedRing{2}/(\DifferentialIdeal{\EquationSet}\cap\ProlongatedRing{2})}$. In
    addition, it is not difficult to see that~${ \left\lbrace x,
        \timederivative{x}{1}, \timederivative{\lambda}{3},
        \timederivative{\lambda}{4}, \timederivative{\lambda}{5},
        \timederivative{\lambda}{6}\right\rbrace}$ is a transcendence
    basis
    of~${\textrm{Frac}(\ProlongatedRing{\DifferentialSystemOrder{\empty}+\DifferentiationIndex}/\ProlongatedIdeal{\DifferentiationIndex
        +1}) =
      \textrm{Frac}(\ProlongatedRing{6}/\ProlongatedIdeal{5})}$, that
    is, we can take~${\SpecializationVariableSet := \left\lbrace
        \timederivative{\lambda}{3},\timederivative{\lambda}{4},
        \timederivative{\lambda}{5},
        \timederivative{\lambda}{6}\right\rbrace}$ as in
    Proposition~\ref{UW}.


We specialize the variables $\SpecializationVariableSet$ in a point $\SpecializationPoint\in\Field{Q}^{4}$ in the generators of~$\ProlongatedIdeal{5}$ so that the conditions in Theorem~\ref{esp_reg_seq} below hold (note that, due to the structure of the
system $F$ and its successive derivatives, any specialization leads
to a reduced regular sequence).
\par
In order to compute a parametric geometric resolution of~$\SpecializationOf{\DifferentialIdeal{\EquationSet}}{\SpecializationPoint}$, we consider the linear form~${\PrimitiveElement:=y}$ that defines the primitive element~$\PrimitiveElement$. The minimal
polynomial of this linear form is
\[ \PrimitiveElementMinimalPolynomial = \PrimitiveElement^{2} + x^2 - 1. \]
Now, for each variable~$z\in{\left\lbrace \timederivative{x}{\indexa}, 2\le \indexa \le 6;\ \timederivative{y}{\indexb},
1\le \indexb \le 6; \timederivative{\lambda}{\indexc}, 0\le \indexc \le 2\right\rbrace}$, we have
polynomials~$b_z(\TranscendenceBasis)$ in~$\PolynomialRing{\Field{R}}{\TranscendenceBasis}$ and~$a_{z}(\TranscendenceBasis,\PrimitiveElement)$ in~$\PolynomialRing{\Field{R}}{\TranscendenceBasis,\PrimitiveElement}$ such that
\[
b_{z}(\TranscendenceBasis) \PartialDerivative{\PrimitiveElementMinimalPolynomial}{\PrimitiveElement}{1}
(\TranscendenceBasis,\PrimitiveElement)
\, z - a_{z}(\TranscendenceBasis,\PrimitiveElement) \] vanishes over~${\Variety{V} := \Variety{V}(\SpecializationOf{\DifferentialIdeal{\EquationSet}}{\SpecializationPoint})}$. For
instance,
\[\begin{array}{cclcccl}
b_{\timederivative{y}{1}} &=& 1, & & a_{\timederivative{y}{1}} &=& - 2 x \timederivative{x}{1},\\
b_{\lambda} &=& 1 - x^{2}, & & a_{\lambda} &=&  -2\timederivative{x}{1}^{2} \PrimitiveElement + 2g (1-x^{2})^{2}, \\
b_{\timederivative{\lambda}{1}}& =&1, & & a_{\timederivative{\lambda}{1}} &=&  - 6g x \timederivative{x}{1}, \\
b_{\timederivative{x}{2}} &=& 1 - x^{2}, & & a_{\timederivative{x}{2}} &=& -2 x \timederivative{x}{1}^{2} \PrimitiveElement + 2g x (1-x^{2})^{2},\\
b_{\timederivative{y}{2}} &=& 1, & & a_{\timederivative{y}{2}} &=& - 2g x^{2} \PrimitiveElement - 2\timederivative{x}{1}^{2},\\
b_{\timederivative{\lambda}{2}} &=& 1, & & a_{\timederivative{\lambda}{2}} &=& 6g^{2} x^{2} \PrimitiveElement - 6g\timederivative{x}{1}^{2}.
\end{array}
\]
If we denote~${\TranscendenceBasisElement{1} := x}$ and~${\TranscendenceBasisElement{2} := \timederivative{x}{1}}$, the system~$\Output{\DifferentialSystem}$ from Definition~\ref{sigma1} is

\[
\Output{\DifferentialSystem} = \left\{
\begin{array}
[c]{cccl}%
\timederivative{\TranscendenceBasisElement{1}}{1}- \TranscendenceBasisElement{2} &=&0,\\
2 \PrimitiveElement (1-\TranscendenceBasisElement{1}^{\!2}) \timederivative{\TranscendenceBasisElement{2}}{1} -2 \TranscendenceBasisElement{1} \TranscendenceBasisElement{2}^{\!2}
\PrimitiveElement- 2g \TranscendenceBasisElement{1} (1-\TranscendenceBasisElement{1}^{\!2})^{2}&=& 0, \\
\PrimitiveElement^{2} + \TranscendenceBasisElement{1}^{\!2} -1&=&0.
\end{array}
\right.
\]
By inverting~${\tPartialDerivative{\PrimitiveElementMinimalPolynomial}{\PrimitiveElement}{1} (\TranscendenceBasis,\PrimitiveElement)= 2\PrimitiveElement}$ modulo~$\PrimitiveElementMinimalPolynomial(\TranscendenceBasis,\PrimitiveElement)$, we get the simplified semi-explicit system
\[
\Output{\DifferentialSystem} = \left\{
\begin{array}
[c]{ccl}%
\timederivative{\TranscendenceBasisElement{1}}{1}&=&\TranscendenceBasisElement{2},\\
(1-\TranscendenceBasisElement{1}^{2}) \timederivative{\TranscendenceBasisElement{2}}{1} &=&
((1-\TranscendenceBasisElement{1}^{2}) g \PrimitiveElement - \TranscendenceBasisElement{2}^{2}) \TranscendenceBasisElement{1}, \\
\PrimitiveElement^{2} + \TranscendenceBasisElement{1}^{2} &=&1.
\end{array}
\right.
\]
}
\end{example}
In the following section we will show how such a
\DifferentialAlgebraicSystem\ system allows us to recover information
about the original system~$\DifferentialSystem$.

\section{Recovering solutions of $\DifferentialSystem$ from solutions
  of~$\Output{\DifferentialSystem}$}
\label{passing}
In this section, we will show that almost any solution of the
system~$\Output{\DifferentialSystem}$ introduced in
Definition~\ref{sigma1} can be lifted to a solution of the input
system~$\DifferentialSystem$. Moreover, we will prove that almost any
solution of~$\DifferentialSystem$ may be recovered from a solution
of~$\Output{\DifferentialSystem}$; more precisely, our main result
states that there is a dense Zariski open set~$\ZariskiOpenSet$ of the variety of initial conditions such
that for any point~$\Variety{\StateVariableSet}$ in~$\ZariskiOpenSet$
there exists a unique solution of~$\DifferentialSystem$ with initial
condition~$\Variety{\StateVariableSet}$ that can be obtained by
lifting a solution of~$\Output{\DifferentialSystem}$.
\par
For the sake of simplicity we assume that the ground differential
field~$\BaseField$ is a subfield of the field of complex
numbers~$\Field{C}$ and the solutions of the involved systems are
solutions in the classical sense. The arguments we will use can be
easily extended to any differential subfield~$\BaseField$ of the field
of rational complex functions~$\Field{C}(t)$ by considering~$t$ as a
new unknown variable and adding the
equation~${\timederivative{t}{1}=1}$.
\par\medskip
In order to lift solutions~$\Output{\AnalyticSolutionSet}$
of~$\Output{\DifferentialSystem}$ to solutions~$\AnalyticSolutionSet$
of~$\DifferentialSystem$, we start by introducing a dense Zariski open
subset of the hypersurface~$\Variety{H}$ that defines suitable initial
conditions determining those solutions
of~$\Output{\DifferentialSystem}$ that we will be able to lift.
\par
Let~$\timederivative{\StateVariable{\indexa}}{\indexb}$,~${0\le
  \indexb \le \DifferentialSystemOrder{\empty}-1}$,~${1\le \indexa \le
  \StateVariableSetCardinal}$, be a variable that does not belong to
the set~$\TranscendenceBasis$. Let~$\Ass{(\SpecializedProlongatedIdeal)}$ be the
set of the associated primes of the radical ideal~$\SpecializedProlongatedIdeal$ and let~${
  \cap \, \mathfrak{p}} $, where $\mathfrak{p}$ runs over   $\Ass{(\SpecializedProlongatedIdeal)}$, be the primary decomposition of~$\SpecializedProlongatedIdeal$ (see
Proposition~\ref{omega0} and Notation~\ref{variety}). Then, for each
component~$\mathfrak{p}$ (which is a prime ideal with~${\dim
  \mathfrak{p} = \card\TranscendenceBasis}$) there exists an
irreducible polynomial~$\Polynomial{\indexb\indexa\mathfrak{p}}$
in~$\PolynomialRing{\BaseField}{\TranscendenceBasis,
  \timederivative{\StateVariable{\indexa}}{\indexb}}$ that lies
in~$\mathfrak{p}$.
\par
Let~$\Polynomial{\indexb\indexa}$ be the least common multiple of the
polynomials~$(\Polynomial{\indexb\indexa\mathfrak{p}})_{\mathfrak{p}\in
  \Ass{(\SpecializedProlongatedIdeal)}}$. Note that~$\Polynomial{\indexb\indexa}$
is the product of the
polynomials~$\Polynomial{\indexb\indexa\mathfrak{p}}$, without
repeated factors:
\begin{equation}
  \label{pij}
  \Polynomial{\indexb\indexa} = \prod_{\mathfrak{p} \in \IdealSet}
  \Polynomial{\indexb\indexa\mathfrak{p}}
  \qquad \textrm{for some subset}\ \IdealSet
  \ \textrm{of}\ \Ass{(\SpecializedProlongatedIdeal)}.
\end{equation}
Therefore, we have that~$\Polynomial{\indexb\indexa}$ is in~$\SpecializedProlongatedIdeal$. Moreover, observe
that no ~$\mathfrak{p}$ in~$\Ass{(\SpecializedProlongatedIdeal)}$ contains ~$\tPartialDerivative{\Polynomial{\indexb\indexa}}{\timederivative{\StateVariable{\indexa}}{\indexb}}{1}$. In
fact, the following relation holds:
\[
\tPartialDerivative{\Polynomial{\indexb\indexa}}%
{\timederivative{\StateVariable{\indexa}}{\indexb}}{1} =
\sum\limits_{\mathfrak{p} \in \IdealSet}
\tPartialDerivative{\Polynomial{\indexb\indexa\mathfrak{p}}}
{\timederivative{\StateVariable{\indexa}}{\indexb}}{1}
\prod\limits_{\mathfrak{p}'\ne \mathfrak{p},\, \mathfrak{p}'\in
  \IdealSet} \Polynomial{\indexb\indexa\mathfrak{p}'}.
\]
Now if~$\mathfrak{p}$ is in~$\IdealSet$
then~${\tPartialDerivative{\Polynomial{\indexb\indexa\mathfrak{p}}}
  {\timederivative{\StateVariable{\indexa}}{\indexb}}{1}
  \prod\limits_{\mathfrak{p}'\ne \mathfrak{p}}
  \Polynomial{\indexb\indexa\mathfrak{p}'}}$ is not
in~$\mathfrak{p}$. Since all the remaining terms of the sum are
multiples of~$\Polynomial{\indexb\indexa\mathfrak{p}}$ they lie
in~$\mathfrak{p}$, and we conclude
that~$\tPartialDerivative{\Polynomial{\indexb\indexa}}
{\timederivative{\StateVariable{\indexa}}{\indexb}}{1}$ is not
in~$\mathfrak{p}$. The same argument runs identically
if~$\mathfrak{p}$ is not in~$\IdealSet$.
\par
Consider now the rational (not necessarily polynomial) map~${\Phi :
  \Variety{H} \rightarrow \Variety{V}}$ associated to the
parametrization of~$\Variety{V}$ from~$\Variety{H}$. Observe that for
any polynomial~$\Polynomial{\empty}$ in~$\PolynomialRing {\BaseField}
{\alltimederivative{\StateVariableSet}
  {\DifferentialSystemOrder{\empty} + \DifferentiationIndex} \setminus
  \SpecializationVariableSet}$ there exists a non negative
integer~$\indexc$ (depending on~$\Polynomial{\empty}$) such
that~$B(\TranscendenceBasis)^{\indexc}
\tPartialDerivative{\PrimitiveElementMinimalPolynomial}
{\PrimitiveElement}{1} (\TranscendenceBasis,
\PrimitiveElement)^{\indexc} \Polynomial{\empty}
\bigl(\Phi(\TranscendenceBasis,\PrimitiveElement)\bigr)$ is a
polynomial in~$\PolynomialRing{\BaseField}{\TranscendenceBasis,
  \PrimitiveElement}$, where $B(U)$ is the polynomial introduced in the proof of Proposition \ref{gamma}.
\par
\begin{notation}
  \label{defP0}
  Let~$\indexc$ be a  positive integer such that
  \[
  \Polynomial{0}(\TranscendenceBasis, \PrimitiveElement) :=
  B(\TranscendenceBasis)^{\indexc}
  \tPartialDerivative{\PrimitiveElementMinimalPolynomial}
  {\PrimitiveElement}{1} (\TranscendenceBasis,
  \PrimitiveElement)^{\indexc}
  \prod_{\timederivative{\StateVariable{\indexa}}{\indexb} \notin
    \TranscendenceBasis}
  \PartialDerivative{\Polynomial{\indexb\indexa}}{\timederivative{\StateVariable{\indexa}}{\indexb}}{1}
  \bigl(\Phi(\TranscendenceBasis, \PrimitiveElement)\bigr)
  \]
  is a polynomial in~$\PolynomialRing{\BaseField}
  {\TranscendenceBasis,
    \PrimitiveElement,\timederivative{\StateVariable{\indexa}}{\indexb}}$
  divisible
  by~$B\tPartialDerivative{\PrimitiveElementMinimalPolynomial}{\PrimitiveElement}{1}$. Note
  that, since~$\tPartialDerivative{\Polynomial{\indexb\indexa}}
  {\timederivative{\StateVariable{\indexa}}{\indexb}}{1}
  \bigl(\TranscendenceBasis,
  \timederivative{\StateVariable{\indexa}}{\indexb}\bigr)$ is not
  in~$\mathfrak{p}$ for each primary component~$\mathfrak{p}$
  of~$\SpecializedProlongatedIdeal $, the set
  \[
  {\Variety{G}_{0}:=\Variety{H}\cap
    \{\Polynomial{0}\ne 0\}}\]
  is a Zariski open set which is dense in
  the hypersurface~$\Variety{H}$. Observe that~$\Variety{G}_0$ is
  included in the definition domain of the rational map~$\Phi$.
\end{notation}
Consider the projection~${\pi_1: \AffineSpace {(1+
    \DifferentiationIndex)
    \StateVariableSetCardinal+\DifferentialIdealOrder{\EquationSet} }
  \to
  \AffineSpace{(1+\DifferentialSystemOrder{\empty})\StateVariableSetCardinal}}$
to the coordinates~${\bigl(\StateVariable{\empty},
  \timederivative{\StateVariable{\empty}}{1},\dots, \timederivative
  {\StateVariable{\empty}}
  {\DifferentialSystemOrder{\empty}}\bigr)}$. Since the
relation~${\DifferentialIdeal{\EquationSet} \cap
  \ProlongatedRing{\DifferentialSystemOrder{\empty}} =
  \SpecializedProlongatedIdeal \cap
  \ProlongatedRing{\DifferentialSystemOrder{\empty}}}$ holds (see
Proposition~\ref{intersec}), we conclude that~${
  \overline{\pi_1(\Variety{V})} =
  V\!\left(\DifferentialIdeal{\EquationSet} \cap
    \ProlongatedRing{\DifferentialSystemOrder{\empty}}\right)}$.
In the sequel, we denote by~$\SolutionSpace= V\!\left(\DifferentialIdeal{\EquationSet} \cap
    \ProlongatedRing{\DifferentialSystemOrder{\empty}}\right)$.  In
particular, if the point~$(\Variety{\TranscendenceBasis}_0,
\Variety{\PrimitiveElement}_0)$ is in~$\Variety{G}_{0}$, the
point~$\pi_1 (\Phi(\Variety{\TranscendenceBasis}_0,
\Variety{\PrimitiveElement}_0))$ is in~$\SolutionSpace$.


Now we will show that an analytic
solution~$\Output{\AnalyticSolutionSet}$ of the
\DifferentialAlgebraicSystem\ system~$\Output{\DifferentialSystem}$
such that the point~$\Output{\AnalyticSolutionSet}(0)$ is
in~$\mathcal{G}_0$ can be lifted to a solution~$\AnalyticSolutionSet$
of the \DifferentialAlgebraicSystem\ system~$\DifferentialSystem$:

\begin{theorem} \label{sig1asig} Suppose
  that~${\Output{\AnalyticSolutionSet}:
    \OpenInterval{-\varepsilon}{\varepsilon}\to
    \Field{C}^{1+\DifferentialIdealOrder{\EquationSet}}}$ is an
  analytic solution of the \DifferentialAlgebraicSystem\
  system~$\Output{\DifferentialSystem}$ such
  that~$\Output{\AnalyticSolutionSet}(t)$ is in~$\mathcal{G}_0$ for
  all $t$ and let
  \[\AnalyticSolutionSet(t) =
  \bigl( \AnalyticSolution{0}(t), \ldots,
  \AnalyticSolution{\DifferentialSystemOrder{\empty}}(t)
  \bigr):=\pi_{1}\circ\Phi\, (\Output{\AnalyticSolutionSet}(t))\]
  where $\AnalyticSolutionSet_{\indexb}=(\AnalyticSolution{\indexb,1},
  \ldots, \AnalyticSolution{\indexb, \EquationSetCardinal})$
  for~${\indexb=0, \ldots,
    \DifferentialSystemOrder{\empty}}$. Then~${\AnalyticSolutionSet_{0}:
    \OpenInterval{-\varepsilon}{\varepsilon} \to \Field{C}^{n}}$ is a
  well defined analytic function that is a solution of the input
  \DifferentialAlgebraicSystem\ system~$\DifferentialSystem$.
\end{theorem}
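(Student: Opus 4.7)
The plan is to set $\gamma(t) := \Phi(\Output{\AnalyticSolutionSet}(t))$, $\AnalyticSolutionSet(t) := \pi_{1}(\gamma(t))$, and to verify three facts in turn: (i) $\gamma$, hence $\AnalyticSolutionSet$, is well defined, analytic, and takes values on $\Variety{V}$; (ii) $\TheEquation{\indexa}(\AnalyticSolution{0}(t), \ldots, \AnalyticSolution{\DifferentialSystemOrder{\empty}}(t)) = 0$ for every $\indexa$; and (iii) $\AnalyticSolution{\indexb+1}(t) = \tfrac{d\AnalyticSolution{\indexb}}{dt}(t)$ for every $0 \le \indexb \le \DifferentialSystemOrder{\empty}-1$. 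Together, (ii) and (iii) yield $\TheEquation{\indexa}(\AnalyticSolution{0}, \timederivative{\AnalyticSolution{0}}{1}, \ldots, \timederivative{\AnalyticSolution{0}}{\DifferentialSystemOrder{\empty}}) = 0$, the theorem's conclusion. Items (i) and (ii) are essentially immediate: the hypothesis $\Output{\AnalyticSolutionSet}(t) \in \Variety{G}_{0}$ places $\Output{\AnalyticSolutionSet}$ in the regular domain of the rational parametrization $\Phi$ (by the construction of $\Polynomial{0}$ in Notation~\ref{defP0}), so $\gamma$ is analytic and lands in $\Variety{V}$; and each $\TheEquation{\indexa}$ lies in $\SpecializedProlongatedIdeal$ and does not involve any variable of $\SpecializationVariableSet$, hence vanishes identically on $\Variety{V}$.

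The substance lies in (iii), which I prove by induction on $\indexb$, coordinate by coordinate. For a fixed pair $(\indexa,\indexb)$, the scalar $\AnalyticSolution{\indexb,\indexa}(t)$ equals the value at $\Output{\AnalyticSolutionSet}(t)$ of the rational function $\Phi^{\star}(\timederivative{\StateVariable{\indexa}}{\indexb}) \in \BaseField(\Variety{H})$, which is either a coordinate $\TranscendenceBasisElement{i}$ of $\TranscendenceBasis$ (when $\timederivative{\StateVariable{\indexa}}{\indexb} \in \TranscendenceBasis$) or a rational expression $a_{m}/(b_{m}\PrimitiveElementMinimalPolynomial')$ for a suitable index $m$ otherwise. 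In the $\TranscendenceBasis$-case, the corresponding equation of $\Output{\DifferentialSystem}$ (of type (a) or (b) in Definition~\ref{sigma1}) is an explicit formula for $\timederivative{\TranscendenceBasisElement{i}}{1}$ whose value at $\Output{\AnalyticSolutionSet}(t)$ is by construction $\Phi^{\star}(\timederivative{\StateVariable{\indexa}}{\indexb+1})(\Output{\AnalyticSolutionSet}(t)) = \AnalyticSolution{\indexb+1,\indexa}(t)$, closing the step. In the other case, the chain rule, combined with the ODEs of $\Output{\DifferentialSystem}$ to substitute each $\timederivative{\TranscendenceBasisElement{i}}{1}$ and with implicit differentiation of $\PrimitiveElementMinimalPolynomial(\TranscendenceBasis,\PrimitiveElement) = 0$ to substitute $\timederivative{\PrimitiveElement}{1}$, reduces the inductive step to the algebraic identity
\[
\tilde{D}\bigl(\Phi^{\star}(\timederivative{\StateVariable{\indexa}}{\indexb})\bigr) \;=\; \Phi^{\star}(\timederivative{\StateVariable{\indexa}}{\indexb+1}) \qquad \text{in } \BaseField(\Variety{H}),
\]
where $\tilde{D}$ is the unique derivation of $\BaseField(\Variety{H})$ extending $\BaseFieldDerivation$, with values on $\TranscendenceBasis$ prescribed by $\Output{\DifferentialSystem}$ and value on $\PrimitiveElement$ forced by $\PrimitiveElementMinimalPolynomial(\TranscendenceBasis,\PrimitiveElement) = 0$.

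Verifying this identity is the main technical obstacle of the proof. My strategy is to transport $\tilde{D}$ through the ring isomorphism $\Phi^{\star} : \BaseField(\Variety{V}_{\alpha}) \to \BaseField(\Variety{H}_{\alpha})$ on each irreducible component, producing a derivation $D'$ of $\BaseField(\Variety{V}_{\alpha})$, and to establish $D'(\timederivative{\StateVariable{\indexa}}{\indexb}) = \timederivative{\StateVariable{\indexa}}{\indexb+1}$ in $\BaseField(\Variety{V}_{\alpha})$ for every $\indexb \le \DifferentialSystemOrder{\empty} - 1$; pulling back through $\Phi^{\star}$ then gives the sought identity. The derivation $D'$ is pinned down uniquely by its (tautological) action $\TranscendenceBasisElement{i} \mapsto \timederivative{\TranscendenceBasisElement{i}}{1}$ on the transcendence basis $\TranscendenceBasis$ together with $D'\vert_{\BaseField} = \BaseFieldDerivation$. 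The defining relations $\timederivative{\EquationSet}{\ell} = 0$ of $\Variety{V}_{\alpha}$, for $\ell \le \DifferentiationIndex - 1$, yield $D'(\timederivative{\EquationSet}{\ell}) = 0$; expanding this by Leibniz and comparing term-by-term with $\timederivative{\EquationSet}{\ell+1}$, which still belongs to the defining ideal and hence vanishes on $\Variety{V}_{\alpha}$, produces a linear system in the differences $D'(\timederivative{\StateVariable{\indexa}}{\indexb}) - \timederivative{\StateVariable{\indexa}}{\indexb+1}$. The quasi-regularity of $\DifferentialSystem$ (Definition~\ref{quasir}), which ensures the relevant Jacobian has full row rank, forces every such difference to vanish for $\indexb \le \DifferentialSystemOrder{\empty} - 1$, closing the induction and proving the theorem.
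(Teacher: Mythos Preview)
Your steps (i) and (ii), and the $\TranscendenceBasis$-case of (iii), are correct and match the paper. The reduction of the non-$\TranscendenceBasis$ case to the algebraic identity
$\tilde D\bigl(\Phi^{\star}(\timederivative{\StateVariable{\indexa}}{\indexb})\bigr)=\Phi^{\star}(\timederivative{\StateVariable{\indexa}}{\indexb+1})$
in $\BaseField(\Variety{H}_{\alpha})$, and the transport to a derivation $D'$ of $\BaseField(\Variety{V}_{\alpha})$ uniquely determined by $D'(\TranscendenceBasisElement{\indexc})=\timederivative{\TranscendenceBasisElement{\indexc}}{1}$, are also sound.

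The gap is in your last paragraph. Quasi-regularity says the Jacobian of $\alltimederivative{\EquationSet}{\ell}$ with respect to $\alltimederivative{\StateVariableSet}{\ell+\DifferentialSystemOrder{\empty}}$ has full \emph{row} rank, i.e.\ its rows are linearly independent. That tells you nothing about the kernel of the associated homogeneous system: the matrix is $(\ell+1)\EquationSetCardinal\times(\ell+\DifferentialSystemOrder{\empty}+1)\EquationSetCardinal$, so $J\Delta=0$ has a solution space of dimension $\DifferentialSystemOrder{\empty}\EquationSetCardinal$ over the relevant field, and the constraints $\Delta_{i,j}=0$ for $\timederivative{\StateVariable{\indexa}}{\indexb}\in\TranscendenceBasis$ remove only $\DifferentialIdealOrder{\EquationSet}\le \DifferentialSystemOrder{\empty}\EquationSetCardinal$ of those degrees of freedom. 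There is a second problem you gloss over: the defining relations of $\Variety{V}_{\alpha}$ are the \emph{specialized} polynomials $\timederivative{\EquationSet}{\ell}\!\mid_{\SpecializationPoint}$, so when you compare $D'(\timederivative{\EquationSet}{\ell}\!\mid_{\SpecializationPoint})$ with $\timederivative{\EquationSet}{\ell+1}\!\mid_{\SpecializationPoint}$ you pick up extra terms coming from the $\SpecializationVariableSet$-variables (constant after specialization under $D'$, but not under the formal shift), which do not fit your ``linear system in the $\Delta_{i,j}$'' description.

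The paper closes the argument differently, and this is precisely where the open set $\Variety{G}_{0}$ enters. For each $\timederivative{\StateVariable{\indexa}}{\indexb}\notin\TranscendenceBasis$ it uses the polynomial $\Polynomial{\indexb\indexa}\in\DifferentialIdeal{\EquationSet}\cap\PolynomialRing{\BaseField}{\TranscendenceBasis,\timederivative{\StateVariable{\indexa}}{\indexb}}$ constructed just before Notation~\ref{defP0}: both $\Polynomial{\indexb\indexa}$ and its total derivative $\timederivative{\Polynomial{\indexb\indexa}}{1}$ lie in $\DifferentialIdeal{\EquationSet}\cap\ProlongatedRing{\DifferentialSystemOrder{\empty}}$, hence vanish on $\SolutionSpace$ and therefore along $\AnalyticSolutionSet(t)$. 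Differentiating $\Polynomial{\indexb\indexa}(\AnalyticSolutionSet(t))=0$ in $t$ and subtracting $\timederivative{\Polynomial{\indexb\indexa}}{1}(\AnalyticSolutionSet(t))=0$ leaves exactly
$\tPartialDerivative{\Polynomial{\indexb\indexa}}{\timederivative{\StateVariable{\indexa}}{\indexb}}{1}\cdot\bigl(\tfrac{d}{dt}\AnalyticSolution{\indexb,\indexa}-\AnalyticSolution{\indexb+1,\indexa}\bigr)=0$,
and the hypothesis $\Output{\AnalyticSolutionSet}(t)\in\Variety{G}_{0}$ was designed so that the partial derivative factor is nonzero. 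In your language: $\Polynomial{\indexb\indexa}$ is a one-variable algebraic relation over $\BaseField(\TranscendenceBasis)$ for $\timederivative{\StateVariable{\indexa}}{\indexb}$, so it computes $D'(\timederivative{\StateVariable{\indexa}}{\indexb})$ directly, and $\timederivative{\Polynomial{\indexb\indexa}}{1}\in\DifferentialIdeal{\EquationSet}\cap\ProlongatedRing{\DifferentialSystemOrder{\empty}}$ identifies the answer with $\timederivative{\StateVariable{\indexa}}{\indexb+1}$. Replacing your quasi-regularity paragraph by this computation fixes the proof.
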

\begin{proof}
  Since~$\Output{\AnalyticSolutionSet}(t)$ is in the
  subset~$\mathcal{G}_0$ of~$\Dom(\Phi)$, the
  map~$\AnalyticSolutionSet_{0}$ is well defined and analytic.
  \par
  In order to prove that~$\AnalyticSolutionSet_{0}$ is a solution
  of~$\DifferentialSystem$, first note that, since the image
  of~${\pi_1\circ \Phi\circ \Output{\AnalyticSolutionSet}}$ is
  included in the variety~$\SolutionSpace$, every polynomial
  in~${\DifferentialIdeal{\EquationSet}\cap
    \ProlongatedRing{\DifferentialSystemOrder{\empty}}}$ vanishes
  at~${(\pi_1 \circ \Phi\circ \Output{\AnalyticSolutionSet})(t)}$ for
  all~$t$. In particular, this holds for each equations
  in~$\DifferentialSystem$.
  \par
  Then, it suffices to show that the coordinate functions
  of~$\AnalyticSolutionSet$ verify the following relations
  \begin{equation}
    \label{solutions}
    \leibnizoperator{\AnalyticSolution{\indexb, \indexa}(t)} = \AnalyticSolution{\indexb+1, \indexa}(t)
  \end{equation}
  for every integer~${0\le \indexb \le
    \DifferentialSystemOrder{\empty}-1,\, 1\le \indexa \le
    \DifferentialSystemSize}$ and for any~$t\in \OpenInterval{-\varepsilon}{\varepsilon}$. To do so, one can
  consider two cases.
  \par
  First suppose that~$\indexa,\indexb$ are such that the
  variable~$\timederivative{\StateVariable{\indexa}}{\indexb}$ is an
  element, say~$\TranscendenceBasisElement{\indexc}$, of the
  transcendence basis~$\TranscendenceBasis$
  of~${\textrm{Frac}(\ProlongatedRing{\DifferentialSystemOrder{\empty}-1}/\DifferentialIdeal{\EquationSet}
    \cap \ProlongatedRing{\DifferentialSystemOrder{\empty}-1})}$
  chosen in subsection~\ref{geosol}. Hence~$\AnalyticSolution{\indexb,
    \indexa}(t)$ is equal to~$\Output{\AnalyticSolution{\indexc}}(t)$
  and relation~(\ref{solutions}) agrees with the equation
  corresponding to~$\TranscendenceBasisElement{\indexc}$ in
  $\Output{\DifferentialSystem}$ after the
  specialization~${(\TranscendenceBasis,\PrimitiveElement)\mapsto
    \Output{\AnalyticSolutionSet}(t)}$. Therefore this relation is
  satisfied because~$\Output{\AnalyticSolutionSet}$ is a solution
  of~$\Output{\DifferentialSystem}$.
  \par
  Suppose now that~$\timederivative{\StateVariable{\indexa}}{\indexb}$
  is not an element of~$\TranscendenceBasis$ and
  let~$\Polynomial{\indexb\indexa}$ be its associated minimal
  polynomial in~${\DifferentialIdeal{\EquationSet} \cap
    \PolynomialRing{\BaseField}{\TranscendenceBasis,\timederivative{\StateVariable{\indexa}}{\indexb}}}\subset {\DifferentialIdeal{\EquationSet} \cap
    \ProlongatedRing{\DifferentialSystemOrder{\empty}-1}}$ as
  in~(\ref{pij}). By taking the total derivative
  of~$\Polynomial{\indexb\indexa}$, we obtain the following relation:
  \[
  \leibnizoperator \Polynomial{\indexb\indexa}
  =\sum_{\indexc}
  \PartialDerivative{\Polynomial{\indexb\indexa}}{\TranscendenceBasisElement{\indexc}}{1}
  \!\left(\!\TranscendenceBasis,\timederivative{\StateVariable{\indexa}}{\indexb}\!\right)
  \timederivative{\TranscendenceBasisElement{\indexc}}{1} +
  \PartialDerivative{\Polynomial{\indexb\indexa}}{\timederivative{\StateVariable{\indexa}}{\indexb}}{1}
  \!\left(\!\TranscendenceBasis,\timederivative{\StateVariable{\indexa}}{\indexb}\!\right)
  \timederivative{\StateVariable{\indexa}}{\indexb+1}\in
  \DifferentialIdeal{\EquationSet} \cap
  \ProlongatedRing{\DifferentialSystemOrder{\empty}}.
  \]
  Therefore~${\tleibnizoperator{\Polynomial{\indexb\indexa}(\AnalyticSolutionSet(t))
    }=0}$ for any~$t\in \OpenInterval{-\varepsilon}{\varepsilon}$;
  more precisely, if~$\AnalyticSolution{\TranscendenceBasis},
  \AnalyticSolution{\timederivative{\TranscendenceBasis}{1}}$ stand
  for the coordinates of~$\AnalyticSolutionSet(t)$ corresponding
  to~$\TranscendenceBasis$
  and~$\timederivative{\TranscendenceBasis}{1}$ respectively, we
  have~${\timederivative
    {\Polynomial{\indexb\indexa}}{1}(\AnalyticSolution{\TranscendenceBasis},\AnalyticSolution{\timederivative{\TranscendenceBasis}{1}},
    \AnalyticSolution{\indexb,\indexa}, \AnalyticSolution{\indexb+1,
      \indexa}) =0}$.  Since we have already proved
  relation~(\ref{solutions}) for
  every~$\TranscendenceBasisElement{\indexc}$, it follows that
  \begin{equation}\label{subs}
    \sum_{\indexc}
    \PartialDerivative{\Polynomial{\indexb\indexa}}{\TranscendenceBasisElement{\indexc}}{1}
    (\AnalyticSolution{\TranscendenceBasis}, \AnalyticSolution{\indexb,\indexa}) \
    \leibnizoperator{\AnalyticSolution{\TranscendenceBasisElement{\indexc}}(t)}
    + \PartialDerivative{\Polynomial{\indexb\indexa}}{\timederivative{\StateVariable{\indexa}}{\indexb}}{1}
    (\AnalyticSolution{\TranscendenceBasis}, \AnalyticSolution{\indexb,\indexa})  \AnalyticSolution{\indexb+1, \indexa}(t) =0.
  \end{equation}
  On the other hand, the polynomial~$\Polynomial{\indexb\indexa}$ is
  in~${\DifferentialIdeal{\EquationSet} \cap
    \ProlongatedRing{\DifferentialSystemOrder{\empty}}}$ and,
  therefore,~${\Polynomial{\indexb\indexa}
    (\AnalyticSolutionSet(t))=0}$. By differentiating this identity
  with respect to~$t$ we obtain:
  \begin{equation}\label{deriv}
    \sum_{\indexc} \PartialDerivative{\Polynomial{\indexb\indexa}}{\TranscendenceBasisElement{\indexc}}{1}
    (\AnalyticSolution{\TranscendenceBasis}(t),
    \AnalyticSolution{\indexb,\indexa}(t))
    \leibnizoperator{\AnalyticSolution{\TranscendenceBasisElement{\indexc}}(t)} +
    \PartialDerivative{\Polynomial{\indexb\indexa}}{\timederivative{\StateVariable{\indexa}}{\indexb}}{1}
    (\AnalyticSolution{\TranscendenceBasis}(t), \AnalyticSolution{\indexb,\indexa}(t))
    \leibnizoperator{\AnalyticSolution{\indexb, \indexa}(t)} =0.
  \end{equation}
  Since we assume that~$\Output{\AnalyticSolutionSet}(t)$ is
  in~$\mathcal{G}_0$, in particular we have:
  \[
  \PartialDerivative{\Polynomial{\indexb\indexa}}{\timederivative{\StateVariable{\indexa}}{\indexb}}{1}
  (\AnalyticSolution{\TranscendenceBasis}(t),\AnalyticSolution{\indexb,\indexa}(t))
  = \PartialDerivative{\Polynomial{\indexb\indexa}}{\timederivative{\StateVariable{\indexa}}{\indexb}}{1}
  (\Phi\circ\Output{\AnalyticSolutionSet}(t))\ne 0.
  \]
  Now, relation~(\ref{solutions}) is an immediate consequence of
  identities~(\ref{subs}) and~(\ref{deriv}).
\end{proof}
We have already shown above how we can recover a solution of the
original system~$\DifferentialSystem$ from a solution of the new
system~$\Output{\DifferentialSystem}$. Now we will show that almost
every solution of~$\DifferentialSystem$ can be recovered from a
solution of~$\Output{\DifferentialSystem}$. We will apply the results
of uniqueness and existence of solutions contained in
Appendix~\ref{app:exuniq}.
\par
Recall that~$\DifferentialIdeal{\Output{\EquationSet}}$ is the
differential ideal
of~$\DifferentialPolynomialRing{\BaseField}
{\TranscendenceBasis,\PrimitiveElement}$ defined by the
system~$\Output{\DifferentialSystem}$ (see Definition~\ref{sigma1}
page~\pageref{sigma1}).  Let~$\Output{\SolutionSpace}$ be the
variety~$V(\DifferentialIdeal{\Output{\EquationSet}} \cap
  \PolynomialRing{\BaseField}{\TranscendenceBasis,
    \PrimitiveElement,
    \timederivative{\TranscendenceBasis}{1},
    \timederivative{\PrimitiveElement}{1}})$
in~$\AffineSpace{2\DifferentialIdealOrder{\EquationSet}+2}$
and~${\widetilde{\pi}:\Output{\SolutionSpace}\to \Variety{H}}$ the
projection~${(\TranscendenceBasis,
  \PrimitiveElement,\timederivative{\TranscendenceBasis}{1},
  \timederivative{\PrimitiveElement}{1})\mapsto
  (\TranscendenceBasis, \PrimitiveElement)}$.

\begin{theorem}\label{sigasig1}
  There exist dense Zariski open sets~$\ZariskiOpenSet\subset \SolutionSpace$ and~$\Output{\ZariskiOpenSet}\subset \Output{\SolutionSpace}$ such that, for every point~$(\Variety{X}_{0}, \ldots ,
  \Variety{X}_{\DifferentialSystemOrder{\empty}})$
  in~$\ZariskiOpenSet$, there exist~$\epsilon>0$, a
  point~${(\Variety{\TranscendenceBasis}_{0},
    \Variety{\PrimitiveElement}_{0},
    \Variety{\timederivative{\TranscendenceBasis}{1}}_0,
    \Variety{\timederivative{\PrimitiveElement}{1}}_{0})}$
  in~$\Output{\ZariskiOpenSet}$ and an analytic
  function~${\Output{\AnalyticSolutionSet} :
    \OpenInterval{-\epsilon}{\epsilon} \to
    \Field{C}^{1+\DifferentialIdealOrder{\EquationSet}}}$ that is a
  solution of the system~$\Output{\DifferentialSystem}$ with initial
  conditions~${\Output{\AnalyticSolutionSet}(0)=(\Variety{\TranscendenceBasis}_{0},
    \Variety{\PrimitiveElement}_{0})}$ satisfying:
  \begin{itemize}
  \item $(\Output{\AnalyticSolutionSet},
    \timederivative{\Output{\AnalyticSolutionSet}}{1}):
    \OpenInterval{-\epsilon}{\epsilon} \to \Output{\ZariskiOpenSet}$,
  \item
    ${(\pi_1\circ\Phi)(\Output{\AnalyticSolutionSet}(t))=\left(\AnalyticSolutionSet(t),
        \ldots,
        \timederivative{\AnalyticSolutionSet}{\DifferentialSystemOrder{\empty}}(t)\right):
      \OpenInterval{-\epsilon}{\epsilon}\to \ZariskiOpenSet}$,
    where~$\AnalyticSolutionSet$ is the unique analytic solution of
    the system~$\DifferentialSystem$ with initial
    conditions~${\bigl(\AnalyticSolutionSet(0), \ldots,
      \timederivative{\AnalyticSolutionSet}{e-1}(0)\bigr)=(\Variety{X}_{0},
      \ldots, \Variety{X}_{e-1})}$.
  \end{itemize}
\end{theorem}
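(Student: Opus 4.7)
The plan is to invert Theorem~\ref{sig1asig}: starting from a generic initial jet of $\DifferentialSystem$, I would pick a preimage on the hypersurface $\Variety{H}$, lift it to consistent initial data for $\Output{\DifferentialSystem}$, integrate using the existence theory for quasi-regular systems in Appendix~\ref{app:exuniq}, and finally apply Theorem~\ref{sig1asig} to recover a solution of $\DifferentialSystem$. Uniqueness of the resulting solution will follow from the uniqueness part of Appendix~\ref{app:exuniq} applied directly to $\DifferentialSystem$.

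First I would define the two open sets. Since $\Phi:\Variety{H}\dashrightarrow\Variety{V}$ is birational, $\pi_{1}(\Variety{V})$ is dense in $\SolutionSpace$ by Proposition~\ref{intersec}, and $\dim\Variety{H}=\dim\SolutionSpace=\DifferentialIdealOrder{\EquationSet}$, the composition $\pi_{1}\circ\Phi$ is a generically finite dominant rational map. Hence there is a dense open $\ZariskiOpenSet_{0}\subset\SolutionSpace$ above which the fibres of this map are nonempty and meet the dense open $\Variety{G}_{0}\subset\Variety{H}$ from Notation~\ref{defP0}. I would take $\ZariskiOpenSet$ to be $\ZariskiOpenSet_{0}$ intersected with the dense open subset of $\SolutionSpace$ on which Appendix~\ref{app:exuniq} provides existence and uniqueness of analytic solutions for $\DifferentialSystem$, and $\Output{\ZariskiOpenSet}\subset\Output{\SolutionSpace}$ to be $\widetilde{\pi}^{-1}(\Variety{G}_{0})$ intersected with the open set on which Appendix~\ref{app:exuniq} gives existence and uniqueness for $\Output{\DifferentialSystem}$ (quasi-regular of index $1$ by Propositions~\ref{gamma} and~\ref{index1}).

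Given $(\Variety{X}_{0},\dots,\Variety{X}_{\DifferentialSystemOrder{\empty}})\in\ZariskiOpenSet$, I would choose a preimage $(\Variety{\TranscendenceBasis}_{0},\Variety{\PrimitiveElement}_{0})\in\Variety{G}_{0}\cap(\pi_{1}\circ\Phi)^{-1}(\Variety{X}_{0},\dots,\Variety{X}_{\DifferentialSystemOrder{\empty}})$. The first $\DifferentialIdealOrder{\EquationSet}$ equations of $\Output{\DifferentialSystem}$ are semi-explicit in $\timederivative{\TranscendenceBasis}{1}$ with coefficients $b_{\indexb}(\TranscendenceBasis)\PrimitiveElementMinimalPolynomial'(\TranscendenceBasis,\PrimitiveElement)$ that do not vanish on $\Variety{G}_{0}$, so they uniquely determine $\Variety{\timederivative{\TranscendenceBasis}{1}}_{0}$; the total derivative $\timederivative{\PrimitiveElementMinimalPolynomial}{1}\in\DifferentialIdeal{\Output{\EquationSet}}$ then fixes $\Variety{\timederivative{\PrimitiveElement}{1}}_{0}$ because $\partial\PrimitiveElementMinimalPolynomial/\partial\PrimitiveElement$ does not vanish on $\Variety{G}_{0}$. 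After shrinking $\ZariskiOpenSet$ if necessary so that the resulting quadruple lies in $\Output{\ZariskiOpenSet}$, Appendix~\ref{app:exuniq} applied to $\Output{\DifferentialSystem}$ produces an analytic $\Output{\AnalyticSolutionSet}:\OpenInterval{-\epsilon}{\epsilon}\to\Field{C}^{1+\DifferentialIdealOrder{\EquationSet}}$ with initial data $(\Output{\AnalyticSolutionSet}(0),\timederivative{\Output{\AnalyticSolutionSet}}{1}(0))=(\Variety{\TranscendenceBasis}_{0},\Variety{\PrimitiveElement}_{0},\Variety{\timederivative{\TranscendenceBasis}{1}}_{0},\Variety{\timederivative{\PrimitiveElement}{1}}_{0})$; shrinking $\epsilon$ further, the curve $(\Output{\AnalyticSolutionSet},\timederivative{\Output{\AnalyticSolutionSet}}{1})$ stays inside $\Output{\ZariskiOpenSet}$ and $\Output{\AnalyticSolutionSet}$ stays inside $\Variety{G}_{0}$.

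Theorem~\ref{sig1asig} applied to this $\Output{\AnalyticSolutionSet}$ then yields an analytic solution $\AnalyticSolutionSet$ of $\DifferentialSystem$ satisfying $(\AnalyticSolutionSet,\dots,\timederivative{\AnalyticSolutionSet}{\DifferentialSystemOrder{\empty}})=\pi_{1}\circ\Phi\circ\Output{\AnalyticSolutionSet}$, which at $t=0$ returns $(\Variety{X}_{0},\dots,\Variety{X}_{\DifferentialSystemOrder{\empty}})$ by construction; uniqueness of $\AnalyticSolutionSet$ with prescribed initial jet of order $\DifferentialSystemOrder{\empty}-1$ equal to $(\Variety{X}_{0},\dots,\Variety{X}_{\DifferentialSystemOrder{\empty}-1})$ is the uniqueness part of Appendix~\ref{app:exuniq} applied to $\DifferentialSystem$. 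The main technical obstacle is the careful bookkeeping of the nested dense open subsets, and in particular guaranteeing that the algebraically-defined lifted initial data $(\Variety{\timederivative{\TranscendenceBasis}{1}}_{0},\Variety{\timederivative{\PrimitiveElement}{1}}_{0})$ lands in the open region of $\Output{\SolutionSpace}$ where Appendix~\ref{app:exuniq} applies to $\Output{\DifferentialSystem}$; this is handled by intersecting finitely many dense opens and exploiting the non-vanishing of $b_{\indexb}\PrimitiveElementMinimalPolynomial'$ and $\partial\PrimitiveElementMinimalPolynomial/\partial\PrimitiveElement$ on $\Variety{G}_{0}$.
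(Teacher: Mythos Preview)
Your proposal is correct and follows essentially the same route as the paper: define the open sets via the unramified/existence loci from Appendix~\ref{app:exuniq} intersected with the $\{P_{0}\neq 0\}$ condition, lift a generic jet of $\DifferentialSystem$ to the hypersurface, integrate $\Output{\DifferentialSystem}$ there, push forward via Theorem~\ref{sig1asig}, and conclude with the uniqueness statement of Theorem~\ref{existeunaordene}. The only cosmetic difference is that the paper defines $\ZariskiOpenSet$ directly as $\AuxiliaryZariskiOpenSet\cap(\pi_{1}\circ\Phi)\bigl(\widetilde{\pi}(\Output{\ZariskiOpenSet})\bigr)$, so the existence of a lift in $\Output{\ZariskiOpenSet}$ is built into the definition and no ``shrinking $\ZariskiOpenSet$ if necessary'' step is needed; your explicit computation of $(\Variety{\timederivative{\TranscendenceBasis}{1}}_{0},\Variety{\timederivative{\PrimitiveElement}{1}}_{0})$ from the semi-explicit equations is replaced in the paper by simply invoking surjectivity of $\widetilde{\pi}$ onto its image.
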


\begin{proof}
  Let~$\AuxiliaryZariskiOpenSet$ be the dense Zariski open set of
  regular points of~$\SolutionSpace$ where the projection to
  ${V\!\left(\DifferentialIdeal{\EquationSet}\cap
      \ProlongatedRing{\DifferentialSystemOrder{\empty}-1}\right)}$ is
  unramified and~$\Output{\AuxiliaryZariskiOpenSet_{0}}$ be the dense
  Zariski open set of regular points of~$\Output{\SolutionSpace}$
  where the projection~$\widetilde{\pi}$ is unramified.
  \par
  Let us denote by~$\Output{\ZariskiOpenSet}$ the dense Zariski open
  subset~${\Output{\AuxiliaryZariskiOpenSet_{0}} \cap
    \{\Polynomial{0} \ne 0\}}$ of~$\Output{\SolutionSpace}$,
  where~$\Polynomial{0}$ denotes the polynomial
  in~$\PolynomialRing{\BaseField}{\TranscendenceBasis,\PrimitiveElement}$
  introduced in Notation~\ref{defP0} page~\pageref{defP0}. Note
  that, since all the points in $\Output{\SolutionSpace}$ satisfy
  that~${B(\TranscendenceBasis)\tPartialDerivative{q}{\PrimitiveElement}{1}(\TranscendenceBasis,
    \PrimitiveElement)\ne 0}$, the
  morphism~${\Phi:\Variety{H} \to \SolutionSpace}$ is an
  isomorphism between the
  set~$\widetilde{\pi}(\Output{\ZariskiOpenSet})$ (denoted
  by~$\Output{\AuxiliaryZariskiOpenSet_{1}}$ in the sequel) and its
  image.  Let us denote by~$\ZariskiOpenSet$ the
  set~${\AuxiliaryZariskiOpenSet \cap
    (\pi_1\circ\Phi)(\Output{\AuxiliaryZariskiOpenSet_{1}})}$. We have
  then the following situation:
  \[
  \begin{array}{ccrcccc}
    \Variety{V} &\supset& \Phi(\Output{\AuxiliaryZariskiOpenSet_{1}})&
    \underset{\cong}{\overset{\Phi}{\longleftarrow}}
    &\Output{\AuxiliaryZariskiOpenSet_{1}} & \subset& \Variety{H} \\
    &&\pi_1\downarrow &&\uparrow\widetilde{\pi}\\
    \SolutionSpace&\supset&\ZariskiOpenSet&&\Output{\ZariskiOpenSet}&\subset& \Output{\SolutionSpace}
  \end{array}
  \]
  Let~$\AffinePoint{\StateVariableSet}$ be the point~${(\AffinePoint{\StateVariableSet}_{0}, \ldots ,
    \AffinePoint{\StateVariableSet}_{\DifferentialSystemOrder{\empty}})}$ in the
set~$\ZariskiOpenSet\subset \AuxiliaryZariskiOpenSet$. By
  Theorem~\ref{existeunaordene} page~\pageref{existeunaordene}, there
  exist a real~${\varepsilon>0}$, an open
  neighborhood~$\ZariskiOpenSet_{\AffinePoint{\StateVariableSet}}$
  of~$\AffinePoint{\StateVariableSet}$,  $\ZariskiOpenSet_{\AffinePoint{\StateVariableSet}}\subset{\ZariskiOpenSet \subset
    \SolutionSpace}$, and a unique analytic
  solution~${\AnalyticSolutionSet_{\AffinePoint{\StateVariableSet}} :
    \OpenInterval{-\varepsilon}{\varepsilon} \to \Field{C}^{n}}$
  of~$\DifferentialSystem$ such that the
  image~${\left(\AnalyticSolutionSet_{\AffinePoint{\StateVariableSet}},
      \ldots, \timederivative
      {\AnalyticSolutionSet_{\AffinePoint{\StateVariableSet}}}{\DifferentialSystemOrder{\empty}}
    \right)\OpenInterval{-\varepsilon}{\varepsilon}}$ is
  in~$\ZariskiOpenSet_{\AffinePoint{\StateVariableSet}}$ and the
    relation~${\left(\AnalyticSolutionSet_{\AffinePoint{\StateVariableSet}}(0),
      \ldots ,
      \timederivative{\AnalyticSolutionSet_{\AffinePoint{\StateVariableSet}}}{\DifferentialSystemOrder{\empty}-1}(0)
    \right)=(\AffinePoint{\StateVariableSet}_{0}, \ldots,
    \AffinePoint{\StateVariableSet}_{\DifferentialSystemOrder{\empty}-1})}$ holds.
  \par
  Since the point~$\AffinePoint{\StateVariableSet}$ is in ${\pi_{1}
    \circ \Phi(\Output{\AuxiliaryZariskiOpenSet_{1}})}$, there exists
  a point $\xi$ in~$\AffineSpace{(\DifferentiationIndex
    - \DifferentialSystemOrder{\empty}) \EquationSetCardinal +
    \DifferentialIdealOrder{\EquationSet}}$ such that~$(\AffinePoint{\StateVariableSet}_{0}, \ldots ,
    \AffinePoint{\StateVariableSet}_{\DifferentialSystemOrder{\empty}}, \xi)$ is in~$\Phi(\Output{\AuxiliaryZariskiOpenSet_{1}})\subset \Variety{V}$. Then, there is a point $(\Variety{\TranscendenceBasis}_{0},
    \Variety{\PrimitiveElement}_{0})$
  in $\Output{\AuxiliaryZariskiOpenSet_{1}}$ such that ${\Phi(\Variety{\TranscendenceBasis}_{0},
    \Variety{\PrimitiveElement}_{0})=(\AffinePoint{\StateVariableSet}_{0}, \ldots ,
    \AffinePoint{\StateVariableSet}_{\DifferentialSystemOrder{\empty}}, \xi)}$
  and, since~${\Output{\AuxiliaryZariskiOpenSet_{1}}=
    \widetilde{\pi}(\Output{\ZariskiOpenSet})}$, the
  relation~${(\Variety{\TranscendenceBasis}_{0},
    \Variety{\PrimitiveElement}_{0})=\widetilde{\pi}(\Variety{\TranscendenceBasis}_{0},
    \Variety{\PrimitiveElement}_{0},
    \timederivative{\Variety{\TranscendenceBasis}_{0}}{1},
    \timederivative{\Variety{\PrimitiveElement}_{0}}{1})}$ holds for
  some point~$\AffinePoint{P}=(\Variety{\TranscendenceBasis}_{0},
    \Variety{\PrimitiveElement}_{0},
  \timederivative{\Variety{\TranscendenceBasis}_{0}}{1},
  \timederivative{\Variety{\PrimitiveElement}_{0}}{1})$
  in~$\Output{\ZariskiOpenSet}$. Recalling
  that~$\Output{\ZariskiOpenSet}$ is a subset
  of~$\Output{\AuxiliaryZariskiOpenSet_{0}}$, by
  Remark~\ref{exunquasireg}, there exist a
  real~${\Output{\varepsilon}>0}$, an open
  neighborhood~$\Output{\AuxiliaryZariskiOpenSet_{\AffinePoint{P}}}\subset \Output{\ZariskiOpenSet}$
  of~$\AffinePoint{P}$ and an analytic
  solution~${\Output{\AnalyticSolutionSet} :
    \OpenInterval{-\Output{\varepsilon}}{\Output{\varepsilon}} \to
    \Field{C}^{1+ \DifferentialIdealOrder{\EquationSet}}}$
  of~$(\Output{\DifferentialSystem})$ with initial
  conditions~${\Output{\AnalyticSolutionSet}(0)=(\Variety{\TranscendenceBasis}_{0},
    \Variety{\PrimitiveElement}_{0})}$ such that the
  image~$(\Output{\AnalyticSolutionSet},
  \Output{\timederivative{\AnalyticSolutionSet}{1}})
  \OpenInterval{-\Output{\varepsilon}}{\Output{\varepsilon}}$ is
  in~$\Output{\AuxiliaryZariskiOpenSet_{\AffinePoint{P}}}$.  Then, for
  every~$t \in\OpenInterval{-\Output{\varepsilon}}{\Output{\varepsilon}}$, the
  point~$\Output{\AnalyticSolutionSet}(t)$ is in the
  subset~$\widetilde \pi(\Output{\ZariskiOpenSet})$
  of~$\Variety{G}_0$, where $\mathcal{G}_0$ is the dense Zariski open
  subset of~$\Variety{H}$ from Notation~\ref{defP0}
  page~\pageref{defP0}.
  \par
  Now, Theorem~\ref{sig1asig} implies that the relation~${\pi_1\circ
    \Phi(\Output{\AnalyticSolutionSet} (t)) =
    (\AnalyticSolutionSet(t),\dots,
    \timederivative{\AnalyticSolutionSet}
    {\DifferentialSystemOrder{\empty}}(t))}$ holds for~$t\in\OpenInterval{-\varepsilon}{\varepsilon}$,
  where~${\AnalyticSolutionSet :
    \OpenInterval{-\varepsilon}{\varepsilon} \to
    \Field{C}^{\EquationSetCardinal}}$ is a solution
  of~$\DifferentialSystem$.
  Since ${\pi_1\circ \Phi(\Output{\AnalyticSolutionSet}(0))}={\pi_1\circ \Phi(\Variety{\TranscendenceBasis}_{0},
    \Variety{\PrimitiveElement}_{0})}={\pi_1(\AffinePoint{\StateVariableSet}_{0}, \ldots ,
    \AffinePoint{\StateVariableSet}_{\DifferentialSystemOrder{\empty}}, \xi)
    =\AffinePoint{\StateVariableSet}}$, which lies in~$\ZariskiOpenSet$, taking a
  smaller~$\varepsilon$ if necessary, we get that for every~$t\in \OpenInterval{-\varepsilon}{\varepsilon}$ the point~${\left(\AnalyticSolutionSet(t),\dots,
      \timederivative{\AnalyticSolutionSet}{\DifferentialSystemOrder{\empty}}(t)\right)}$
  is
  in~${\pi_1\circ\Phi(\Output{\AuxiliaryZariskiOpenSet_{1}})\cap\AuxiliaryZariskiOpenSet}=\ZariskiOpenSet$. Moreover, as~${(\AnalyticSolutionSet(0), \dots,
    \timederivative{\AnalyticSolutionSet}{\DifferentialSystemOrder{\empty}-1}(0))
    = (\AffinePoint{\StateVariableSet}_{0}, \ldots ,
    \AffinePoint{\StateVariableSet}_{\DifferentialSystemOrder{\empty}-1})}$, if we
  denote~$\min\{\Output{\varepsilon}, \varepsilon\}$ by~$\epsilon$,
  using the uniqueness statement of Theorem~\ref{existeunaordene}, we
  conclude that~${\AnalyticSolutionSet_{\Variety{\StateVariableSet}}(t)=
    \AnalyticSolutionSet(t)}$ for every~$t\in\OpenInterval{-\epsilon}{\epsilon}$.
\end{proof}

\section*{Conclusion}
In this paper we presented a new index reduction method for a class of implicit DAE systems which is based on a characterization  of the differentiation index from an algebraic point of view.
We proved that any of these systems
is generically equivalent to a  first
order semiexplicit DAE system with differentiation index $1$ and
we described a probabilistic algorithm to compute the index and
the new DAE system by using the Kronecker solver for polynomial equations.

Our results rely
on some \emph{a priori} hypotheses on the
considered differential system, for example the primality of the ideal $[F]$, which seems natural in practice, and the primality of the prolongation ideals, which is in general quite difficult to test.
Assuming an admissible initial condition for our system to be known, polynomial time numerical methods of resolution, such as the one described in \cite{corless-ilie:2009} or
power series computations (\cite{bostan:2007}), may be used in order to obtain a solution. Since it is always possible to
test, by simple substitution in the input equations, if a
solution of the new system is actually a solution of the
original DAE, one can attempt to use our method even if
all our requirements on the considered system are not guaranteed.


 A next step in a future work would be to generalize the method to positive differential dimension and to regular components of systems, without any extra technical hypothesis.

\emph{Urbs Rom{\ae} non uno die condita fuit.}

\appendix

\section{On the specialization of free variables in a regular
  sequence}\label{appendix:specialization}

This appendix deals with Bertini-type results from Commutative Algebra that justify
the random evaluation of suitable free variables made in
Proposition~\ref{omega0} page~\pageref{omega0}. We have decided to
include them for the sake of completeness and the lack of adequate
references.
\par
Throughout the appendix,~$\BaseField$ denotes a field of
characteristic~$0$
and~$\PolynomialRing{\BaseField}{\StateVariableSet}$ the ring of
polynomials in the
variables~${\StateVariableSet:=\{\StateVariable{1},\dots,
  \StateVariable{\StateVariableSetCardinal}\}}$ with coefficients
in~$\BaseField$.


We start recalling a well-known result concerning the behavior of
radical ideals under field extensions of~$\BaseField$ (see \cite[\S 27,
(27.2), Lemma 2]{matsumura:1970} and \cite[Volume 2, Chapter 10, \S
11]{hodge:1994}):
\begin{theorem}
  \label{hp}
  Let~${\BaseField\subseteq \BaseField^{\star}}$ be a field extension
  and let~$\Ideal{\empty}$ be a radical equidimensional ideal
  in~$\PolynomialRing{\BaseField}{\StateVariableSet}$. Then, the
  subset~$\Ideal{\empty}^{\star}:={ \Ideal{\empty} \otimes \BaseField^{\star}}$
  of~$\PolynomialRing{\BaseField^{\star}}{\StateVariableSet}$ is also
  a radical equidimensional ideal. Furthermore, if~$\Ideal{\empty}$ is a prime ideal
  and~$\StateVariableSubset$ is a subset of the variables
  $\StateVariableSet$ such
  that~${\PolynomialRing{\BaseField}{\StateVariableSubset}
    \hookrightarrow
    \PolynomialRing{\BaseField}{\StateVariableSet}/\Ideal{\empty}}$,
  then~${\PolynomialRing{\BaseField^{\star}}{\StateVariableSubset}
    \hookrightarrow
    \PolynomialRing{\BaseField^{\star}}{\StateVariableSet}/\mathfrak p}$
  for any prime ideal~$\mathfrak p$
  in~$\AssociatedPrime{\Ideal{\empty}^{\star}}$ (in other words, the
  transcendence of the variables modulo~$\Ideal{\empty}$ is preserved
  modulo any primary component of~${\Ideal{\empty}}^{\star}$).
\end{theorem}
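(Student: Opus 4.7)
The plan is to establish the two parts separately. The first part (radicality and equidimensionality of $I^{\star}$) will follow from standard facts on flat field extensions combined with the hypothesis that $\BaseField$ has characteristic zero. The second part (preservation of algebraic independence of the subset $Y$ modulo any associated prime of $I^{\star}$) will combine this with a Noether-normalization-style argument.

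For radicality, I will invoke the fact that over a perfect field---which $\BaseField$ is, being of characteristic zero---reducedness of a finitely generated algebra is preserved by arbitrary field extension (see Matsumura, \S27). Since $\BaseField[X]/I$ is reduced by the hypothesis that $I$ is radical, the base change $\BaseField[X]/I \otimes_{\BaseField} \BaseField^{\star} \cong \BaseField^{\star}[X]/I^{\star}$ is again reduced, so $I^{\star}$ is radical. For equidimensionality, I will first treat the case of a single prime $\mathfrak{p}$ of dimension $d$: the ideal $\mathfrak{p}^{\star}$ is radical as above, and the identity $\dim \BaseField^{\star}[X]/\mathfrak{p}^{\star} = \dim \BaseField[X]/\mathfrak{p} = d$ for base change of finitely generated algebras by field extensions forces every minimal prime over $\mathfrak{p}^{\star}$ to have dimension $d$. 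Decomposing the radical equidimensional $I$ as the intersection of its minimal primes and applying this to each component yields the equidimensionality of $I^{\star}$.

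For the second part, fix a minimal prime $\mathfrak{q}\in\Ass(I^{\star})$. Flatness of $\BaseField \to \BaseField^{\star}$ implies that $\mathfrak{q}$, being an associated prime of the target, contracts to an associated prime of $\BaseField[X]/I$; since $I$ is prime, this forces $\mathfrak{q}\cap\BaseField[X] = I$, hence an injection $\BaseField[X]/I \hookrightarrow \BaseField^{\star}[X]/\mathfrak{q}$. To upgrade this to an injection of $\BaseField^{\star}[Y]$, I will extend $Y$ to a transcendence basis $\widetilde{Y}$ of $\mathrm{Frac}(\BaseField[X]/I)$ over $\BaseField$ with $\widetilde{Y} \subseteq X$ (using the exchange lemma on the generating set $X$). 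Generic freeness yields a nonzero $h\in\BaseField[\widetilde{Y}]$ such that $(\BaseField[X]/I)_h$ is a finite $\BaseField[\widetilde{Y}]_h$-module; since $\mathfrak{q}\cap\BaseField[\widetilde{Y}]\subseteq I\cap\BaseField[\widetilde{Y}] = 0$, we have $h\notin\mathfrak{q}$. Base-changing and localizing, $\BaseField^{\star}[\widetilde{Y}]_h\hookrightarrow(\BaseField^{\star}[X]/I^{\star})_h$ becomes an integral extension, and Part~1 gives that $\BaseField^{\star}[X]/\mathfrak{q}$ has dimension $d = |\widetilde{Y}|$; a standard dimension argument for integral extensions of domains then forces $\mathfrak{q}\cap\BaseField^{\star}[\widetilde{Y}] = 0$. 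Restricting to $Y\subseteq\widetilde{Y}$ gives the desired injection.

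The main obstacle I anticipate is precisely this last step: upgrading from the essentially automatic injection $\BaseField[Y]\hookrightarrow\BaseField^{\star}[X]/\mathfrak{q}$ to the finer injection $\BaseField^{\star}[Y]\hookrightarrow\BaseField^{\star}[X]/\mathfrak{q}$, which requires genuinely combining the equidimensionality from Part~1 with integrality after a suitable localization. An alternative route would first reduce to the case $\BaseField^{\star}=\overline{\BaseField}$ (any counterexample involves only finitely many elements of $\BaseField^{\star}$) and then exploit the transitive action of $\mathrm{Gal}(\overline{\BaseField}/\BaseField)$ on the minimal primes of $I^{\star}$: a nonzero $f\in\mathfrak{q}\cap\overline{\BaseField}[Y]$ would produce, via the product over its Galois orbit, a nonzero Galois-invariant polynomial in $I\cap\BaseField[Y]$, contradicting the assumed algebraic independence.
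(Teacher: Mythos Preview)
The paper does not prove this theorem at all: it is stated as a ``well-known result'' with references to Matsumura, \S27, (27.2), Lemma~2, and Hodge--Pedoe, Volume~2, Chapter~10, \S11, and is used without further justification. So there is no proof in the paper to compare your proposal against; you have supplied an argument where the authors chose to cite.

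Your proposal is essentially correct, but there is one presentational wrinkle worth fixing. In Part~1 you claim that the identity $\dim \BaseField^{\star}[X]/\mathfrak{p}^{\star} = d$ ``forces every minimal prime over $\mathfrak{p}^{\star}$ to have dimension $d$''; as stated this is a non sequitur, since the Krull dimension only controls the \emph{largest} component. The equidimensionality of $\mathfrak{p}^{\star}$ is in fact equivalent to the statement that a transcendence basis $\widetilde{Y}\subseteq X$ for $\BaseField[X]/\mathfrak{p}$ remains algebraically independent over $\BaseField^{\star}$ modulo every minimal prime of $\mathfrak{p}^{\star}$---which is exactly Part~2 in the special case $Y=\widetilde{Y}$. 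Your Part~2 then invokes Part~1, so as written there is a mild circularity. The fix is already implicit in your own argument: generic freeness gives a nonzero $h\in\BaseField[\widetilde{Y}]$ making $(\BaseField[X]/I)_h$ \emph{free} (not merely finite) over $\BaseField[\widetilde{Y}]_h$; after base change, $(\BaseField^{\star}[X]/I^{\star})_h$ is free over the domain $\BaseField^{\star}[\widetilde{Y}]_h$, hence torsion-free, so every element of any minimal prime $\mathfrak{q}_h$ is a zerodivisor and therefore $\mathfrak{q}_h\cap\BaseField^{\star}[\widetilde{Y}]_h=0$. This yields $\mathfrak{q}\cap\BaseField^{\star}[\widetilde{Y}]=0$ directly, proving both equidimensionality and Part~2 in one stroke, with no appeal to Part~1. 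Your Galois-orbit alternative is also a clean and self-contained route.
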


\begin{notation}
 For an ideal $\Ideal{\empty}\subset \PolynomialRing{\BaseField}{\StateVariableSet}$, a subset $W\subset X$ of cardinality $s$ and any  $\SpecializationPoint\in \BaseField^{{\SpecializationVariableSetCardinal}}$, we denote by $\Ideal{\empty}|_{\SpecializationPoint}$ the ideal of $\PolynomialRing{\BaseField}{\StateVariableSet\setminus \SpecializationVariableSet }$ obtained after the specialization $\SpecializationVariableSet\mapsto \SpecializationPoint$ in the polynomials of $\Ideal{\empty}$.
\end{notation}

\begin{lemma} \label{lemma1} Let~$\Ideal{\empty}$ be a radical
  equidimensional ideal
  in~$\PolynomialRing{\BaseField}{\StateVariableSet}$ of
  dimension~$s$.  Let~$\SpecializationVariableSet$
  be a subset of $s$ variables in~$\StateVariableSet$ such that the canonical
  morphism ${\PolynomialRing{\BaseField}{\SpecializationVariableSet}
    \rightarrow
    \PolynomialRing{\BaseField}{\StateVariableSet}/\Ideal{\empty}}$ is
  injective. Then, there exists a nonempty~$\BaseField$-definable
  Zariski open set~$\ZariskiOpenSet$
  in~$\AffineSpace{\SpecializationVariableSetCardinal}$ such
  that~$\Ideal{\empty}|_{\SpecializationPoint}$
  is a radical $0$-dimensional ideal
  in~$\PolynomialRing{\BaseField}{\StateVariableSet\setminus \SpecializationVariableSet }$ for any
  point~$\SpecializationPoint$ in~${\ZariskiOpenSet \cap
    \BaseField^\SpecializationVariableSetCardinal}$.
\end{lemma}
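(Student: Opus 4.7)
The plan is to first pass to the generic fiber over $\AffineSpace{\SpecializationVariableSetCardinal}$, show it is zero-dimensional and radical, and then descend both properties to specializations lying in a suitable Zariski open set.

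First, I would localize at $T := \BaseField[\SpecializationVariableSet] \setminus \{0\}$. The injectivity of $\BaseField[\SpecializationVariableSet] \hookrightarrow \BaseField[\StateVariableSet]/I$ combined with $\dim I = \SpecializationVariableSetCardinal$ forces each associated (equivalently, minimal) prime $\mathfrak{p}$ of $I$ to satisfy $\mathfrak{p} \cap \BaseField[\SpecializationVariableSet] = (0)$. Consequently $T^{-1}\mathfrak{p}$ is a proper ideal in $T^{-1}\BaseField[\StateVariableSet] = \BaseField(\SpecializationVariableSet)[\StateVariableSet \setminus \SpecializationVariableSet]$, and the quotient $T^{-1}(\BaseField[\StateVariableSet]/\mathfrak{p})$ is a finitely generated $\BaseField(\SpecializationVariableSet)$-algebra that is a domain whose fraction field is algebraic over $\BaseField(\SpecializationVariableSet)$ (by transcendence-degree matching). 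Hence it is a field, so $T^{-1}\mathfrak{p}$ is maximal, and $T^{-1}I = \bigcap T^{-1}\mathfrak{p}$ is zero-dimensional and radical.

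Second, I would invoke generic freeness. Let $M := \BaseField[\StateVariableSet]/I$, viewed as a $\BaseField[\SpecializationVariableSet]$-module. Then $M \otimes_{\BaseField[\SpecializationVariableSet]} \BaseField(\SpecializationVariableSet) = \BaseField(\SpecializationVariableSet)[\StateVariableSet \setminus \SpecializationVariableSet]/T^{-1}I$ is a $\BaseField(\SpecializationVariableSet)$-vector space of some finite dimension $d$. By Grothendieck's generic freeness lemma (see e.g.\ Matsumura, Section 22), there exists a nonzero $h_{0} \in \BaseField[\SpecializationVariableSet]$ such that $M[1/h_{0}]$ is a free $\BaseField[\SpecializationVariableSet][1/h_{0}]$-module of rank $d$. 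Then for any $\SpecializationPoint \in \AffineSpace{\SpecializationVariableSetCardinal}$ with $h_{0}(\SpecializationPoint) \neq 0$, the specialization $\BaseField[\StateVariableSet \setminus \SpecializationVariableSet]/I|_{\SpecializationPoint}$ is a $\BaseField$-algebra of dimension exactly $d$, and in particular $I|_{\SpecializationPoint}$ is zero-dimensional.

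Third, I would secure radicality via the discriminant of the trace form. Because $\BaseField$ has characteristic zero and $T^{-1}I$ is reduced, the finite $\BaseField(\SpecializationVariableSet)$-algebra $M \otimes_{\BaseField[\SpecializationVariableSet]} \BaseField(\SpecializationVariableSet)$ is étale, so the discriminant of its trace pairing is a nonzero element of $\BaseField(\SpecializationVariableSet)$. Reading this discriminant on the free basis of $M[1/h_{0}]$ gives an element of $\BaseField[\SpecializationVariableSet][1/h_{0}]$, and after clearing a power of $h_{0}$ we obtain a nonzero polynomial $\Delta \in \BaseField[\SpecializationVariableSet]$. The set $\ZariskiOpenSet := \{\SpecializationPoint \in \AffineSpace{\SpecializationVariableSetCardinal} : h_{0}(\SpecializationPoint) \Delta(\SpecializationPoint) \neq 0\}$ is a nonempty, $\BaseField$-definable Zariski open subset of $\AffineSpace{\SpecializationVariableSetCardinal}$. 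For any $\SpecializationPoint \in \ZariskiOpenSet$, the specialized trace form on $\BaseField[\StateVariableSet \setminus \SpecializationVariableSet]/I|_{\SpecializationPoint}$ remains nondegenerate; hence this $\BaseField$-algebra is finite étale over $\BaseField$, and therefore $I|_{\SpecializationPoint}$ is radical, proving the lemma.

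The main obstacle lies in the third step: the reduction of radicality to nonvanishing of the discriminant requires the formation of the trace pairing (and thus its discriminant) to commute with the base change $\BaseField[\SpecializationVariableSet][1/h_{0}] \to \BaseField$ induced by $\SpecializationVariableSet \mapsto \SpecializationPoint$. This is precisely where the freeness from the second step is essential, as it ensures that the Gram matrix of the trace pairing on a fixed free basis has entries in $\BaseField[\SpecializationVariableSet][1/h_{0}]$ whose specialization coincides with the Gram matrix of the trace pairing on the specialized fiber.
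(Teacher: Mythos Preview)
Your argument is correct in outline and takes a genuinely different route from the paper. One inaccuracy in Step~1: injectivity of $\BaseField[\SpecializationVariableSet]\hookrightarrow\BaseField[\StateVariableSet]/I$ together with $\dim I=\SpecializationVariableSetCardinal$ does \emph{not} force every minimal prime $\mathfrak{p}$ of $I$ to satisfy $\mathfrak{p}\cap\BaseField[\SpecializationVariableSet]=(0)$; for example $I=(xy)\subset\BaseField[x,y]$ with $\SpecializationVariableSet=\{x\}$ has the minimal prime $(x)$ meeting $\BaseField[x]$. This does not damage your conclusion: primes meeting $T$ localize to the unit ideal, while at least one prime avoids $T$ (else a product of nonzero elements of $\BaseField[\SpecializationVariableSet]$ would lie in $I$), so $T^{-1}I$ is still a proper finite intersection of maximal ideals, hence zero-dimensional and radical. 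Steps~2 and~3 are fine; the compatibility of the trace pairing with base change along a free module is exactly the point you flag, and it goes through.

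The paper's proof proceeds instead via the Shape Lemma: it writes the generic fiber $\widetilde{I}=I\otimes\BaseField(\SpecializationVariableSet)$ explicitly as $(q(\SpecializationVariableSet,\ell),\ q'(\SpecializationVariableSet,\ell)\,y-p_y(\SpecializationVariableSet,\ell))_{y}$ for a primitive linear form $\ell$ and a squarefree polynomial $q$, clears one denominator $g\in\BaseField[\SpecializationVariableSet]$ to realize this description over $\BaseField[\StateVariableSet]_g$, and takes $\ZariskiOpenSet=\{g\cdot q'\cdot\discr_\upsilon(q)\neq 0\}$. Your generic-freeness/trace-discriminant approach is cleaner from a pure commutative-algebra standpoint and avoids the Shape Lemma altogether; the paper's approach is more constructive and dovetails with the geometric-resolution machinery used elsewhere in the article, giving an explicit polynomial defining $\ZariskiOpenSet$ in terms of data that the surrounding algorithm already computes.
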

\begin{proof}
  Let~$\widetilde{\Ideal{\empty}}$ denote the ideal~${\Ideal{\empty}
    \otimes \BaseField(\SpecializationVariableSet)}$
  in~$\PolynomialRing{\BaseField(\SpecializationVariableSet)}{\StateVariableSubset}$
  where~$\StateVariableSubset$ stands for the set of variables ~${\StateVariableSet \setminus \SpecializationVariableSet}$.
  From the hypotheses and Theorem~\ref{hp}, this ideal is a
  radical~$0$-dimensional ideal in the polynomial
  ring~$\PolynomialRing{\BaseField(\SpecializationVariableSet)}{\StateVariableSubset}$.
  \par
  Then, the Shape Lemma implies that there exist a square-free
  polynomial~$\PrimitiveElementMinimalPolynomial(\SpecializationVariableSet,\PrimitiveElement)$
  in~$\PolynomialRing{\PolynomialRing{\BaseField}{\SpecializationVariableSet}}{\PrimitiveElement}$,
  where~$\PrimitiveElement$ is a new single indeterminate,
  polynomials~$\GeometricResolutionFiber{y}$
  in~$\PolynomialRing{\PolynomialRing{\BaseField}{W}}{\PrimitiveElement}$
  for each variable~$y\in \StateVariableSubset$, and a~$\BaseField$-linear
  form $\ell$ in the variables~$\StateVariableSubset$ such that the
  equality of ideals
  \begin{equation} \label{WW} \widetilde{\Ideal{\empty}}=\left( {\!
      \PrimitiveElementMinimalPolynomial(\SpecializationVariableSet,\ell),
             \left(\PartialDerivative{\PrimitiveElementMinimalPolynomial}{\PrimitiveElement}{1}
        (\SpecializationVariableSet,\ell)
        y-\GeometricResolutionFiber{y}(\SpecializationVariableSet,\ell)
     \right)_{y\in \StateVariableSubset}
    }\right)
  \end{equation}
  holds
  in~$\PolynomialRing{\BaseField(\SpecializationVariableSet)}{\StateVariableSubset}$. Moreover,
  we may assume that the
  polynomials~${\PrimitiveElementMinimalPolynomial,
    \tPartialDerivative{\PrimitiveElementMinimalPolynomial}{\PrimitiveElement}{1}
    y -
    \GeometricResolutionFiber{y}}$ have trivial
  content
  in~$\PolynomialRing{\BaseField}{\SpecializationVariableSet}$.  From
  identity~(\ref{WW}), it follows that there exists a nonzero
  denominator~$g$
  in~$\PolynomialRing{\BaseField}{\SpecializationVariableSet}$ such
  that the relation~${\Ideal{\empty}\,
    \PolynomialRing{\BaseField}{\StateVariableSet}_{g} =
   \left({\PrimitiveElementMinimalPolynomial
      (\SpecializationVariableSet, \ell),
      (\tPartialDerivative{\PrimitiveElementMinimalPolynomial}{\PrimitiveElement}{1}
      (\SpecializationVariableSet, \ell) y -
      \GeometricResolutionFiber{y}(\SpecializationVariableSet,\ell))_{y\in \StateVariableSubset}}\right)
      \PolynomialRing{\BaseField}{\StateVariableSet}_{g}}$ holds.
  \par
  Finally, let~$\ZariskiOpenSet$ be the Zariski open subset
  of~$\AffineSpace{\SpecializationVariableSetCardinal}$ where the
  product~${\left(g\,
      \tPartialDerivative{\PrimitiveElementMinimalPolynomial}{\PrimitiveElement}{1}
      \discr_{\PrimitiveElement}{\PrimitiveElementMinimalPolynomial}
    \right)\!  (\SpecializationVariableSet,\ell)}$ is nonzero. Clearly
  for any point~$\SpecializationPoint$ in~$\ZariskiOpenSet$ we have
  \[
  \Ideal{\empty}|_{\SpecializationPoint}= \left({
    \PrimitiveElementMinimalPolynomial(\SpecializationPoint,\ell),(\tPartialDerivative{\PrimitiveElementMinimalPolynomial}{\PrimitiveElement}{1}
    (\SpecializationPoint,\ell)y-
    \GeometricResolutionFiber{y}
    (\SpecializationPoint,\ell))_{y\in \StateVariableSubset}} \right),
  \]
  that is a~$0$-dimensional radical ideal
  in~$\PolynomialRing{\BaseField}{\StateVariableSubset}$.
\end{proof}
The previous Lemma can be generalized as follows:
\begin{lemma}
  \label{lemma2}
  Let~$\Ideal{\empty}$ be a radical equidimensional ideal
  in~$\PolynomialRing{\BaseField}{\StateVariableSet}$
  and~ $\SpecializationVariableSet$ be
  a subset  of $s$ variables
  in~$\StateVariableSet$ such that the natural
  morphism~${\PolynomialRing{\BaseField}{\SpecializationVariableSet}
    \rightarrow
    \PolynomialRing{\BaseField}{\StateVariableSet}/\Ideal{\empty}}$ is
  injective (in particular the relation~${s\le\dim \Ideal{\empty}}$
  holds). Then, there exists a nonempty~$\BaseField$-definable Zariski
  open set~$\ZariskiOpenSet$
  in~$\AffineSpace{\SpecializationVariableSetCardinal}$ such that, for
  any point~$\SpecializationPoint$ in~${\ZariskiOpenSet\cap
    \BaseField^{\SpecializationVariableSetCardinal}}$, the
  ideal~${\Ideal{\empty}|_{\SpecializationPoint}}$ has dimension~${\dim{\Ideal{\empty}}-
    \SpecializationVariableSetCardinal}$ and its primary components of
  maximal dimension are prime ideals.
\end{lemma}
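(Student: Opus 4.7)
The plan is to extend the argument of Lemma~\ref{lemma1} from the zero-dimensional setting to a relative one by tensoring with $K(W)$ and then applying a Shape Lemma-style description parametrized by $W$. As a preliminary reduction, I would perform a generic $K$-linear change of variables in $X \setminus W$; this leaves $W$ and the specialization $W \mapsto \mathcal{W}$ untouched, and the conclusion of the lemma, being a statement about dimension, radicality and primality of the top-dimensional primary components of $I|_{\mathcal{W}}$, is invariant under such a change. I may therefore assume that there exists a subset $Y' \subset X \setminus W$ of cardinality $d - s$ (with $d := \dim I$) such that $W \cup Y'$ is a transcendence basis of $K[X]/\mathfrak{p}$ for every minimal prime $\mathfrak{p}$ of $I$ satisfying $\mathfrak{p} \cap K[W] = 0$; at least one such $\mathfrak{p}$ exists, since the hypothesis $I \cap K[W] = 0$ implies $\bigcap_i (\mathfrak{p}_i \cap K[W]) = 0$, forcing at least one contraction to be zero.

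Next I set $J := I \cdot K(W)[X \setminus W]$, the extension of $I$ under the localization of $K[X]$ at $K[W] \setminus \{0\}$. Localization preserves radicality, and the minimal primes of $J$ are precisely the extensions of those minimal primes $\mathfrak{p}_i$ of $I$ with $\mathfrak{p}_i \cap K[W] = 0$, each of dimension $d - s$ by the fiber-dimension theorem; hence $J$ is radical and equidimensional of dimension $d - s$. The transcendence property of $Y'$ implies that $J \cdot K(W, Y')[X \setminus (W \cup Y')]$ is zero-dimensional and radical, so the Shape Lemma argument used in the proof of Lemma~\ref{lemma1} produces a $K$-linear form $u$ in the variables $X \setminus (W \cup Y')$, a polynomial $q(W, Y', T) \in K[W, Y', T]$ square-free in $T$, polynomials $p_y(W, Y', T) \in K[W, Y', T]$ for each $y \in X \setminus (W \cup Y')$, and a non-zero $g(W, Y') \in K[W, Y']$, all satisfying
\[
I \cdot K[X]_g = \bigl( q(W, Y', u),\ q'_T(W, Y', u)\, y - p_y(W, Y', u) \;:\; y \in X \setminus (W \cup Y') \bigr) \cdot K[X]_g.
\]

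I would then take $\mathcal{O} \subset \mathbb{A}^s$ to be the open set on which each of $g$, $q'_T$ and $\mathrm{discr}_T(q)$, viewed as polynomials in $(Y', T)$ with coefficients in $K[W]$, has at least one coefficient non-vanishing at $\mathcal{W}$, further intersected with the dense open locus on which the fiber of $V(I) \cap \{g = 0\}$ over $\mathcal{W}$ has dimension at most $d - s - 1$ (which holds generically by the generic fiber dimension theorem, since $g \notin I$ forces $\dim V(I) \cap \{g = 0\} \leq d - 1$). For $\mathcal{W} \in \mathcal{O} \cap K^s$ the specialized polynomial $q(\mathcal{W}, Y', T)$ is square-free over $K(Y')$, and the specialization of the identity above yields
\[
I|_{\mathcal{W}} \cdot K[X \setminus W]_{g(\mathcal{W}, Y')} = \bigl(q(\mathcal{W}, Y', u),\ q'_T(\mathcal{W}, Y', u)\, y - p_y(\mathcal{W}, Y', u)\bigr) \cdot K[X \setminus W]_{g(\mathcal{W}, Y')},
\]
which is radical equidimensional of dimension $d - s$, with primary components corresponding bijectively to the irreducible factors of $q(\mathcal{W}, Y', T)$ over $K(Y')$ and hence prime. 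The dimension hypothesis on $V(I) \cap \{g = 0\}$ ensures that no top-dimensional component of $V(I|_{\mathcal{W}})$ lies entirely in $\{g(\mathcal{W}, Y') = 0\}$; thus every top-dimensional primary component of $I|_{\mathcal{W}}$ survives in the localization and is prime, while $\dim I|_{\mathcal{W}} = d - s$. The main technical obstacle lies in the preliminary reduction: one must verify that a single generic $K$-linear change of variables produces a common $Y'$ for all relevant minimal primes of $I$, which follows from the fact that a finite intersection of non-empty Zariski-open conditions on the coefficient space is still non-empty.
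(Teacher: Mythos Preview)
Your overall strategy matches the paper's: extend scalars so as to obtain a Shape-Lemma description parameterized by a transcendence basis $W\cup Y'$, then control what happens at a specialization $W=\mathcal W$ by a fiber-dimension argument. The paper first extends to $K(U)$ (your $Y'$) and invokes Lemma~\ref{lemma1} over that base, then proves a somewhat lengthy ``Claim'' about leading coefficients to force $U$ to remain transcendental on every top component of the fiber; your route through $\dim\bigl(V(I)\cap\{g=0\}\bigr)$ is a cleaner substitute for that Claim --- once the gap below is closed.

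The gap concerns the ``bad'' minimal primes of $I$, i.e.\ those $\mathfrak p$ with $\mathfrak p\cap K[W]\neq 0$. Your assertion ``$g\notin I$ forces $\dim\bigl(V(I)\cap\{g=0\}\bigr)\le d-1$'' is false in general: $g\notin I=\bigcap_i\mathfrak p_i$ only says $g$ avoids \emph{some} minimal prime, not all of them, and for a bad prime $\mathfrak p$ there is no reason that $g(W,Y')\notin\mathfrak p$. If $g\in\mathfrak p$ for such a $\mathfrak p$, then $V(\mathfrak p)$ is a $d$-dimensional component of $V(I)\cap\{g=0\}$ and your fiber-dimension open set may fail to be dense. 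The same issue already surfaces one step earlier: the Shape Lemma applied to $I\otimes K(W,Y')$ sees only the good primes (the bad ones become the unit ideal upon inverting $K[W]\setminus\{0\}$), so the identity you write as $I\cdot K[X]_g=(q,\dots)\cdot K[X]_g$ is really an identity for the intersection of the good primes, not for $I$ itself.

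The repair is exactly what the paper's proof does first: shrink $\mathcal O$ to avoid the closures of the projections to $\mathbb A^{s}$ of the bad components (this is the paper's open set $\mathcal O_0$). For $\mathcal W$ in that locus, $V(I|_{\mathcal W})$ receives contributions only from good primes; for each good prime $\mathfrak p$ your linear change guarantees $K[W,Y']\hookrightarrow K[X]/\mathfrak p$, hence $g\notin\mathfrak p$ and $\dim\bigl(V(\mathfrak p)\cap\{g=0\}\bigr)\le d-1$, after which your fiber-dimension argument goes through as written.
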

\begin{proof}
  Let us denote~$\dim{\Ideal{\empty}}$ by~$d$. If~${d=s}$, the result follows from Lemma~\ref{lemma1}. Suppose now
  that~${s<d}$.  Let us denote by~$\IdealSet$ the set of primary
  components~$\PrimeIdeal{\empty}$ of~$\Ideal{\empty}$ such
  that~${\PolynomialRing{\BaseField}{\SpecializationVariableSet}
    \rightarrow
    \PolynomialRing{\BaseField}{\StateVariableSet}/\PrimeIdeal{\empty}}$
  is a monomorphism.  Since the canonical
  map~${\PolynomialRing{\BaseField}{\SpecializationVariableSet}
    \rightarrow
    \PolynomialRing{\BaseField}{\StateVariableSet}/\Ideal{\empty}}$ is
  assumed to be injective, the set~$\IdealSet$ is a nonempty subset
  in~$\AssociatedPrime{\Ideal{\empty}}$. If we consider the canonical
  projection~${\pi: \AffineSpace{\StateVariableSetCardinal}\rightarrow
    \AffineSpace{\SpecializationVariableSetCardinal}}$ the prime
  ideals in~$\IdealSet$ correspond exactly with the
  components~$V(\PrimeIdeal{\empty})$
  of~$V(\Ideal{\empty})$ such
  that~${\overline{\pi(V(\PrimeIdeal{\empty}))}=\AffineSpace{\SpecializationVariableSetCardinal}}$. Therefore,
  the set~${\bigcup_{\PrimeIdeal{\empty}\notin \IdealSet}
    \overline{\pi(V(\PrimeIdeal{\empty}))}}$ is a proper closed subset
  of~$\AffineSpace{\SpecializationVariableSetCardinal}$;
  let~$\ZariskiOpenSet_{0}$ denote its complement
  in~$\AffineSpace{\SpecializationVariableSetCardinal}$ (that is a
  nonempty Zariski open set).
  \par
  Let~$\TranscendenceBasis$ denote a set of $d-s$~$\BaseField$-linear combinations of the
  variables~${\StateVariableSet \setminus \SpecializationVariableSet}$
  such that the natural
  morphism~${\PolynomialRing{\BaseField}{\TranscendenceBasis,
      \SpecializationVariableSet} \rightarrow
    \PolynomialRing{\BaseField}{\StateVariableSet}/\PrimeIdeal{\empty}}$
  is a monomorphism for all~$\PrimeIdeal{\empty}$ in~$\IdealSet$.  After
  a change of variables we may suppose without loss of generality
  that~$\TranscendenceBasis$ is a subset of~${\StateVariableSet
    \setminus \SpecializationVariableSet}$. Let $\StateVariableSubset$ be the set of
  the~${\StateVariableSetCardinal-d}$ many remaining
  variables~$\StateVariableSet \setminus \{ \TranscendenceBasis,
  \SpecializationVariableSet\}$.
  \par
  By Theorem~\ref{hp}, the
  ideal~${\widetilde{\Ideal{\empty}}:=\Ideal{\empty} \otimes
    \BaseField(\TranscendenceBasis)}$ is a radical equidimensional
  ideal in~$\PolynomialRing{\BaseField(\TranscendenceBasis)}
  {\SpecializationVariableSet,\StateVariableSubset}$.  Moreover,
  if~$\PrimeIdeal{\empty}$ is in~$\IdealSet$, we have
  that~${\PrimeIdeal{\empty} \otimes \BaseField(\TranscendenceBasis)}$
  is a prime ideal of dimension~${d-\card{\TranscendenceBasis}=s}$ and
  so,~$\widetilde{\Ideal{\empty}}$ is also $s$-dimensional. Therefore,
  the ideal~$\widetilde{\Ideal{\empty}}\subset\PolynomialRing{\BaseField(\TranscendenceBasis)}{\SpecializationVariableSet,
    \StateVariableSubset}$ and the
  variables~$\SpecializationVariableSet$ meet the hypotheses of
  Lemma~\ref{lemma1} and then, there exists a Zariski nonempty open
  set~$\mathcal{G}$
  in~$\AffineSpace{\SpecializationVariableSetCardinal}_{\overline{\BaseField(\TranscendenceBasis)}}$,
  definable over~$\BaseField(\TranscendenceBasis)$, such that
  ${\widetilde{\Ideal{\empty}}|_{\SpecializationPoint}}$
  is a radical~$0$-dimensional ideal
  in~$\PolynomialRing{\BaseField(\TranscendenceBasis)}{\StateVariableSubset}$
  for every~$\SpecializationPoint$ in~${\mathcal{G}\cap
    \BaseField(\TranscendenceBasis)^s}$.  Let~$\ZariskiOpenSet_{1}$ be
  a Zariski nonempty open set
  in~$\AffineSpace{\SpecializationVariableSetCardinal}$ definable
  over~$\BaseField$ such that~$\ZariskiOpenSet_{1}$ is a subset
  of~$\mathcal{G}$.  Then, if~$\SpecializationPoint$ is a point
  in~${\ZariskiOpenSet_{1} \cap
   \BaseField^{\SpecializationVariableSetCardinal}}$, 
  the primary components
  of~${{{\Ideal{\empty}}|_{\SpecializationPoint}}}$
  in~$\PolynomialRing{\BaseField}{\TranscendenceBasis,\StateVariableSubset}$
  with no nonzero polynomial pure in the
  variables~$\TranscendenceBasis$ are~${(d-
    \SpecializationVariableSetCardinal)}$-dimensional prime ideals.
  \par
  On the other hand, since the projection~${\pi:
    \AffineSpace{\StateVariableSetCardinal} \rightarrow
    \AffineSpace{\SpecializationVariableSetCardinal}}$ induces a
  dominant regular morphism of varieties~${\pi:
    V(\Ideal{\empty}) \rightarrow
    \AffineSpace{\SpecializationVariableSetCardinal}}$, by the theorem
  on the dimension of the fibers (see for instance~\cite[Chapter~I, \S
  6.3, Theorem~7]{shafarevich:1994}, suitably adapted to the case of a
  ground field not necessarily algebraically closed), there exists
  a~$\BaseField$-definable Zariski nonempty open
  subset~$\ZariskiOpenSet_{2}$
  of~$\AffineSpace{\SpecializationVariableSetCardinal}$ such that for
  every point~$\SpecializationPoint$ in~$\ZariskiOpenSet_{2}$ the
  fiber~${\pi^{-1}(\SpecializationPoint)}$ is~$\BaseField$-definable
  and geometrically equidimensional of dimension equal to~${\dim
    V({\Ideal{\empty}})-\dim
    \AffineSpace{\SpecializationVariableSetCardinal}=d-
    \SpecializationVariableSetCardinal}$.
  \par
  Summarizing, if the point~$\SpecializationPoint$ is
  in~${\ZariskiOpenSet_{0}\cap
    \ZariskiOpenSet_{1}\cap\ZariskiOpenSet_{2}\cap
    \BaseField^{\SpecializationVariableSetCardinal}}$, then the
  ideal~${\Ideal{\empty}+({\SpecializationVariableSet-\SpecializationPoint})}$
  satisfies:
  \begin{enumerate}
  \item Any of its primary components of maximal dimension
    contains an isolated prime~$\PrimeIdeal{\empty}$ in~$\IdealSet$
    (that is a subset of~$\AssociatedPrime{\Ideal{\empty}}$) such
    that~${\PolynomialRing{\BaseField}{\TranscendenceBasis,\SpecializationVariableSet}
      \hookrightarrow
      \PolynomialRing{\BaseField}{\StateVariableSet}/\PrimeIdeal{\empty}}$
    (since~$\SpecializationPoint$ is in~$\ZariskiOpenSet_{0}\cap \ZariskiOpenSet_{2}$ and due
    to the choice of the variables~$\TranscendenceBasis$).
  \item Its primary components of maximal dimension which contain no
    nonzero polynomials
    in~$\PolynomialRing{\BaseField}{\TranscendenceBasis}$ are prime
    (because~$\SpecializationPoint$ is in~$\ZariskiOpenSet_{1}$).
  \item It is geometrically equidimensional of dimension~${d-
      \SpecializationVariableSetCardinal}$
    (since~$\SpecializationPoint$ is in~$\ZariskiOpenSet_{2}$).
  \end{enumerate}
  From condition~(2), the Lemma will be proved if we are able \emph{to
    exhibit a~$\BaseField$-definable Zariski nonempty open set
    contained in~${\ZariskiOpenSet_{0} \cap \ZariskiOpenSet_{1} \cap
      \ZariskiOpenSet_{2}}$ such that, for any
    point~$\SpecializationPoint$ lying in this open set, all the
    isolated components
    of~${\Ideal{\empty}+({\SpecializationVariableSet-\SpecializationPoint})}$
    contain no nonzero polynomials
    of~$\PolynomialRing{\BaseField}{\TranscendenceBasis}$}. The
  remaining part of the proof is devoted to showing this fact.
  \par
  Let~$\PrimeIdeal{\empty}$ be a prime ideal in~$\IdealSet$. We have
  the following
  injections~${\PolynomialRing{\BaseField}{\SpecializationVariableSet}
    \hookrightarrow \PolynomialRing{\BaseField}{\TranscendenceBasis,
      \SpecializationVariableSet} \hookrightarrow
    \PolynomialRing{\BaseField}{\StateVariableSet}
    /\PrimeIdeal{\empty}}$, and each
  variable~$\StateVariableSubsetElement{\indexa}$
  in~$\StateVariableSubset$, with~${\indexa=1,\ldots
    ,\StateVariableSetCardinal - d}$, verifies a polynomial
  equation~${\Polynomial{\indexa}(\StateVariableSubsetElement{\indexa})=0}$
  modulo~$\PrimeIdeal{\empty}$, where~$\Polynomial{\indexa}$ is a
  nonzero element
  in~$\PolynomialRing{\PolynomialRing{\BaseField}{\TranscendenceBasis,
      \SpecializationVariableSet}}{T}$ with~${\deg_{T}
    \Polynomial{\indexa}>0}$ (here~$T$ denotes a new variable). We
  denote by~$\LeadingCoefficient({\Polynomial{\indexa}})$ the leading
  coefficient of~$\Polynomial{\indexa}$
  in~$\PolynomialRing{\BaseField}{\TranscendenceBasis,\SpecializationVariableSet}$.
  \par\medskip\noindent
  \textbf{Claim}: \emph{There exists a~$\BaseField$-definable nonempty
    Zariski open
    subset~$\ZariskiOpenSet_{\indexa,\PrimeIdeal{\empty}}$
    in~${\ZariskiOpenSet_{0} \cap \ZariskiOpenSet_{1} \cap
      \ZariskiOpenSet_{2}}$ such that for any
    point~$\SpecializationPoint$
    in~${\ZariskiOpenSet_{\indexa,\PrimeIdeal{\empty}} \cap
      \BaseField^{\SpecializationVariableSetCardinal}}$ and for any
    irreducible component~$\Variety{C}$
    of~${\pi^{-1}(\SpecializationPoint) \cap
      V(\PrimeIdeal{\empty})}$ (necessarily of
    dimension~${d- \SpecializationVariableSetCardinal}$), we have
    that~$\LeadingCoefficient({\Polynomial{\indexa}})$ does not vanish
    identically over~$\Variety{C}$.}
  \par\medskip\noindent
  Let us prove this claim. To do so, consider the
  set~${{V}(\PrimeIdeal{\empty})\cap
    {V}(\LeadingCoefficient({\Polynomial{\indexa}}))}$
  in~$\AffineSpace{\StateVariableSetCardinal}$. If this set is empty
  there is no component of $\pi^{-1}(\SpecializationPoint)\cap V(\PrimeIdeal{\empty})$ contained
  in~${V}(\LeadingCoefficient({\Polynomial{\indexa}}))$ and
  the claim follows. On the other hand, if the
  set~${{V}(\PrimeIdeal{\empty})\cap
    {V}(\LeadingCoefficient({\Polynomial{\indexa}}))}$ is
  nonempty, since we have the
  injection~${\PolynomialRing{\BaseField}{\TranscendenceBasis,
      \SpecializationVariableSet} \hookrightarrow
    \PolynomialRing{\BaseField}{\StateVariableSet}/\PrimeIdeal{\empty}}$, then $\LeadingCoefficient({\Polynomial{\indexa}})$ is not a zero divisor modulo $\PrimeIdeal{\empty}$ and so
  this algebraic set must be an equidimensional algebraic variety of dimension~${\dim
   {V}(\PrimeIdeal{\empty})-1=d-1}$.  We consider two cases:
  \begin{itemize}
  \item If the
    relation~${\overline{\pi({V}(\PrimeIdeal{\empty})\cap
        V(\LeadingCoefficient({\Polynomial{\indexa}})))}=
      \AffineSpace{\SpecializationVariableSetCardinal}}$ holds: from
    the theorem on the dimension of fibers applied to the restriction
    of the projection~$\pi$ to~${{V}(\PrimeIdeal{\empty})\cap
      {V}(\LeadingCoefficient({\Polynomial{\indexa}}))}$, there
    exists a~$\BaseField$-definable nonempty Zariski
    open~$\ZariskiOpenSet_{\indexa,\PrimeIdeal{\empty}}$
    in~$\AffineSpace{\SpecializationVariableSetCardinal}$, that can be
    assumed contained in~${\ZariskiOpenSet_{0} \cap
      \ZariskiOpenSet_{1} \cap \ZariskiOpenSet_{2}}$, such that, for
    any point~$\Variety{W}$
    in~${\ZariskiOpenSet_{\indexa,\PrimeIdeal{\empty}}\cap
      \BaseField^{\SpecializationVariableSetCardinal}}$, any component
    of the fiber~${\pi^{-1}(\Variety{W})\cap
      {V}(\PrimeIdeal{\empty})\cap
      {V}(\LeadingCoefficient({\Polynomial{\indexa}}))}$ has
    dimension~${\dim{{V}(\PrimeIdeal{\empty})\cap
        {V}(\LeadingCoefficient({\Polynomial{\indexa}}))}-
      \SpecializationVariableSetCardinal=d-1-
      \SpecializationVariableSetCardinal}$.
    \par
    Now, if~$\Variety{C}$ is an irreducible component
    of~$\pi^{-1}(\Variety{W})\cap {V}(\PrimeIdeal{\empty})$,
    since~$\Variety{W}$ is in~$\ZariskiOpenSet_{2}$ the
    relation~${\dim \Variety{C}=d-
      \SpecializationVariableSetCardinal}$
    holds. If~$\LeadingCoefficient({\Polynomial{\indexa}})$ vanishes
    identically over~$\Variety{C}$, then~$\Variety{C}$ is contained
    in~${\pi^{-1}(\Variety{W}) \cap
      {V}(\PrimeIdeal{\empty})\cap
      {V}(\LeadingCoefficient({\Polynomial{\indexa}}))}$ that is
    a~${(d-1- \SpecializationVariableSetCardinal)}$-dimensional
    variety. So~$\Variety{C}$ is not contained
    in~${V}(\LeadingCoefficient({\Polynomial{\indexa}}))$.
  \item If~$\overline{{\pi({V}(\PrimeIdeal{\empty})\cap
        {V}(\LeadingCoefficient({\Polynomial{\indexa}})))}}$ is
    a proper subset
    of~$\AffineSpace{\SpecializationVariableSetCardinal}$: define the
    set~$\ZariskiOpenSet_{\indexa,\PrimeIdeal{\empty}}$ to be the open
    Zariski set~${\ZariskiOpenSet_{0} \cap \ZariskiOpenSet_{1} \cap
      \ZariskiOpenSet_{2} \cap
      \overline{\pi({V}(\PrimeIdeal{\empty})\cap
        {V}(\LeadingCoefficient({\Polynomial{\indexa}})))}^{\textrm{C}}}$. Clearly~${\pi^{-1}(\Variety{W})
      \cap {V}(\PrimeIdeal{\empty})\cap
      {V}(\LeadingCoefficient({\Polynomial{\indexa}}))}$ is
    empty and so~$\ZariskiOpenSet_{\indexa,\PrimeIdeal{\empty}}$
    works.
  \end{itemize}
  Hence, our Claim is proved.  In order to finish the proof of the
  lemma, consider the nonempty Zariski open
  set~${\ZariskiOpenSet:={\bigcap_{\indexa,\PrimeIdeal{\empty}\in
        \IdealSet} \ZariskiOpenSet_{\indexa,\PrimeIdeal{\empty}}}}$.
  It suffices to prove that for any~$\SpecializationPoint$
  in~$\ZariskiOpenSet$ and any isolated primary
  component~$\mathfrak{q}$
  of~${\Ideal{\empty}+({\SpecializationVariableSet-\SpecializationPoint})}
  \subset \PolynomialRing{\BaseField}{\StateVariableSet}$,
  the
  relation~${\PolynomialRing{\BaseField}{\TranscendenceBasis} \cap
    \mathfrak{q} =\{0\}}$ holds.  The primary ideal~$\mathfrak{q}$
  defines an irreducible component~$\Variety{C}$ of the
  fiber~${\pi^{-1}(\SpecializationPoint)\cap
    {V}(\Ideal{\empty})}$; therefore,~$\Variety{C}$ is a
  subset of~${V}(\PrimeIdeal{\empty})$ for
  some~$\PrimeIdeal{\empty}$ in~$\IdealSet$
  (since~$\SpecializationPoint$ is in~$\ZariskiOpenSet_{0}$) and the
  relation~${\dim \mathfrak{q}=d-
    \SpecializationVariableSetCardinal}$ holds because
  $\SpecializationPoint$ is in~$\ZariskiOpenSet_{2}$. Hence we have
  the sequence of natural
  morphisms~\[{\PolynomialRing{\BaseField}{\TranscendenceBasis,
      \SpecializationVariableSet} \hookrightarrow
    \PolynomialRing{\BaseField}{\StateVariableSet}/\PrimeIdeal{\empty}
    \rightarrow
    \PolynomialRing{\BaseField}{\StateVariableSet}/\sqrt{\mathfrak{q}}=
    \PolynomialRing{\BaseField}{\Variety{C}}},\]
    where the last
  morphism is the projection to the quotient; in particular if we
  call~$\phi$ the composition of the morphisms we have
  ~${\phi(\SpecializationVariableSet)=\SpecializationPoint}$ and the coordinate
  ring~$\PolynomialRing{\BaseField}{\Variety{C}}$ is generated as
  a~$\BaseField$-algebra by~$\phi(\TranscendenceBasis)$ and the class
  of the variables~$\StateVariableSubset$. From the definition
  of~$\ZariskiOpenSet$ and the previous Claim, it follows that the
  class of each~$\StateVariableSubsetElement{\indexa}$ is algebraic
  over the
  subring~$\PolynomialRing{\BaseField}{\phi(\TranscendenceBasis)}$
  of~$\PolynomialRing{\BaseField}{\Variety{C}}$ and so, the
  relation~${d- \SpecializationVariableSetCardinal=\dim \Variety{C}\le
    \card \TranscendenceBasis=d- \SpecializationVariableSetCardinal}$
  holds. Hence~$\phi$ is a monomorphism and in
  particular~$\mathfrak{q}$ does not contain polynomials
  in~${\PolynomialRing{\BaseField}{\TranscendenceBasis}
    \setminus\{0\}}$.
\end{proof}
\begin{corollary}
  Let~$\Ideal{\empty}$ be a radical equidimensional ideal
  in~$\PolynomialRing{\BaseField}{\StateVariableSet}$, and
  let~$\TranscendenceBasis$ and~$\SpecializationVariableSet$ be
  two disjoint subsets of variables
  such that the natural
  morphism~${\PolynomialRing{\BaseField}{\TranscendenceBasis,
      \SpecializationVariableSet} \rightarrow
    \PolynomialRing{\BaseField}{\StateVariableSet}/\Ideal{\empty}}$ is
  injective. Then, there exists a nonempty~$\BaseField$-definable
  Zariski open set~$\ZariskiOpenSet$
  in~$\AffineSpace{\SpecializationVariableSetCardinal}$, where $\SpecializationVariableSetCardinal$ is the cardinality of $W$,  verifying the
  previous lemma and such that, for every point~$\SpecializationPoint$
  in~${\ZariskiOpenSet \cap
    \BaseField^{\SpecializationVariableSetCardinal}}$, the
  morphism~${\PolynomialRing{\BaseField}{\TranscendenceBasis}
    \rightarrow
    \PolynomialRing{\BaseField}{\StateVariableSet}/
    (\Ideal{\empty}+({\SpecializationVariableSet-\SpecializationPoint}))}$
  is injective. \end{corollary}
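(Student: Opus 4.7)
The strategy is to refine the open set produced by Lemma~\ref{lemma2}, imposing the additional condition that no nonzero polynomial in $\PolynomialRing{\BaseField}{\TranscendenceBasis}$ lies in the specialized ideal. First, apply Lemma~\ref{lemma2} to obtain a nonempty $\BaseField$-definable Zariski open set $\ZariskiOpenSet_0 \subseteq \AffineSpace{\SpecializationVariableSetCardinal}$ (with $\SpecializationVariableSetCardinal = \card \SpecializationVariableSet$) on which its conclusions hold. Let $\IdealSet' \subseteq \Ass(I)$ be the subset of associated primes $\mathfrak{p}$ of $I$ for which the natural map $\PolynomialRing{\BaseField}{\TranscendenceBasis,\SpecializationVariableSet} \to \PolynomialRing{\BaseField}{\StateVariableSet}/\mathfrak{p}$ is injective; equivalently, those $\mathfrak{p}$ such that $V(\mathfrak{p})$ projects dominantly onto $\AffineSpace{\card \TranscendenceBasis + \SpecializationVariableSetCardinal}$ under the $(\TranscendenceBasis,\SpecializationVariableSet)$-projection. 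The injectivity of $\PolynomialRing{\BaseField}{\TranscendenceBasis,\SpecializationVariableSet} \to \PolynomialRing{\BaseField}{\StateVariableSet}/I$ forces $\IdealSet' \neq \emptyset$: otherwise, picking a nonzero $f_\mathfrak{p} \in \mathfrak{p} \cap \PolynomialRing{\BaseField}{\TranscendenceBasis,\SpecializationVariableSet}$ for each $\mathfrak{p} \in \Ass(I)$ and multiplying would give a nonzero element of $\PolynomialRing{\BaseField}{\TranscendenceBasis,\SpecializationVariableSet} \cap \bigcap \mathfrak{p} = \PolynomialRing{\BaseField}{\TranscendenceBasis,\SpecializationVariableSet} \cap I$, contradicting the hypothesis.

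For each $\mathfrak{p} \in \IdealSet'$, the theorem on the dimension of fibers (see e.g.\ \cite[Ch.~I, \S 6.3, Thm.~7]{shafarevich:1994}), applied to the dominant projection $V(\mathfrak{p}) \to \AffineSpace{\card \TranscendenceBasis + \SpecializationVariableSetCardinal}$ composed with the projection onto $\AffineSpace{\SpecializationVariableSetCardinal}$, produces a $\BaseField$-definable nonempty Zariski open set $\ZariskiOpenSet_\mathfrak{p} \subseteq \AffineSpace{\SpecializationVariableSetCardinal}$ such that, for every $\SpecializationPoint \in \ZariskiOpenSet_\mathfrak{p}$, the fiber $V(\mathfrak{p})|_{\SpecializationPoint}$ is equidimensional of dimension $\dim I - \SpecializationVariableSetCardinal$ and projects dominantly onto $\AffineSpace{\card \TranscendenceBasis}$. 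Set $\ZariskiOpenSet := \ZariskiOpenSet_0 \cap \bigcap_{\mathfrak{p} \in \IdealSet'} \ZariskiOpenSet_\mathfrak{p}$, a nonempty $\BaseField$-definable Zariski open subset of $\ZariskiOpenSet_0$. For any $\SpecializationPoint \in \ZariskiOpenSet \cap \BaseField^{\SpecializationVariableSetCardinal}$ and any choice of $\mathfrak{p}_0 \in \IdealSet'$, the inclusion $V(\mathfrak{p}_0)|_{\SpecializationPoint} \subseteq V(I|_{\SpecializationPoint})$ combined with the dominance of $V(\mathfrak{p}_0)|_{\SpecializationPoint} \to \AffineSpace{\card \TranscendenceBasis}$ shows that $V(I|_{\SpecializationPoint}) \to \AffineSpace{\card \TranscendenceBasis}$ is dominant. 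Hence no nonzero polynomial of $\PolynomialRing{\BaseField}{\TranscendenceBasis}$ vanishes on $V(I|_{\SpecializationPoint})$, so $\PolynomialRing{\BaseField}{\TranscendenceBasis} \cap I|_{\SpecializationPoint} \subseteq \PolynomialRing{\BaseField}{\TranscendenceBasis} \cap \sqrt{I|_{\SpecializationPoint}} = \{0\}$, which gives the desired injectivity.

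The main technical step is verifying that, for generic $\SpecializationPoint$, the fiber $V(\mathfrak{p})|_{\SpecializationPoint}$ of a prime $\mathfrak{p} \in \IdealSet'$ projects dominantly onto $\AffineSpace{\card \TranscendenceBasis}$. The point is that the image of the dominant morphism $V(\mathfrak{p}) \to \AffineSpace{\card \TranscendenceBasis + \SpecializationVariableSetCardinal}$ contains a dense Zariski open subset $V_\mathfrak{p}$, and the fiber-dimension theorem applied to the closed complement $\AffineSpace{\card \TranscendenceBasis + \SpecializationVariableSetCardinal} \setminus V_\mathfrak{p} \to \AffineSpace{\SpecializationVariableSetCardinal}$ guarantees that, for $\SpecializationPoint$ in a $\BaseField$-definable nonempty Zariski open subset, the intersection $V_\mathfrak{p} \cap (\AffineSpace{\card \TranscendenceBasis} \times \{\SpecializationPoint\})$ is Zariski dense in $\AffineSpace{\card \TranscendenceBasis} \times \{\SpecializationPoint\}$; lifting these points to $V(\mathfrak{p})|_{\SpecializationPoint}$ produces the required dominance.
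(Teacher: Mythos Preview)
Your proof is correct, but it takes a different route from the paper's. The paper's argument is shorter: it applies Lemma~\ref{lemma2} a second time, but over the extended base field~$\BaseField(\TranscendenceBasis)$ to the ideal~$\Ideal{\empty}\otimes\BaseField(\TranscendenceBasis)$ in~$\PolynomialRing{\BaseField(\TranscendenceBasis)}{\StateVariableSet\setminus\TranscendenceBasis}$ (this is legitimate by Theorem~\ref{hp}). That yields a $\BaseField(\TranscendenceBasis)$-definable open set~$\widetilde{\ZariskiOpenSet}$ on which the specialized ideal over~$\BaseField(\TranscendenceBasis)$ has the expected dimension and is in particular proper, which immediately forces~$\PolynomialRing{\BaseField}{\TranscendenceBasis}\cap(\Ideal{\empty}+(\SpecializationVariableSet-\SpecializationPoint))=\{0\}$. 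One then intersects with the open set from Lemma~\ref{lemma2} over~$\BaseField$ and shrinks to a $\BaseField$-definable open subset.

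Your approach stays entirely over~$\BaseField$: you single out a prime component~$\mathfrak{p}$ on which both~$\TranscendenceBasis$ and~$\SpecializationVariableSet$ are free, and then argue directly via Chevalley and the fiber-dimension theorem that for generic~$\SpecializationPoint$ the specialized component~$V(\mathfrak{p})|_{\SpecializationPoint}$ still dominates the~$\TranscendenceBasis$-space. This avoids the base-change machinery of Theorem~\ref{hp} at the cost of a more hands-on geometric argument (the last paragraph, which is the real content of your proof). The paper's version is more economical because it recycles Lemma~\ref{lemma2} wholesale; yours is more self-contained and makes the geometry of the fibers explicit.
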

\begin{proof}
  \label{U}
  From Theorem~\ref{hp} we are able to apply Lemma~\ref{lemma2} to
  the field~$\widetilde{\BaseField}:=\BaseField(\TranscendenceBasis)$ and the ideal~$\widetilde{\Ideal{\empty}}:={\Ideal{\empty} \otimes\widetilde{\BaseField}} \subset\PolynomialRing{\widetilde{\BaseField}}{\StateVariableSet
  \setminus \TranscendenceBasis}$. Therefore we obtain a nonempty
  Zariski open subset~$\widetilde{\ZariskiOpenSet}$
  of~$\AffineSpace{\SpecializationVariableSetCardinal}_{\widetilde{\BaseField}}$. On
  the other hand, we may also apply Lemma~\ref{lemma2} to the
  ideal~$\Ideal{\empty}$ and the
  variables~$\SpecializationVariableSet$ over the ground
  field~$\BaseField$, obtaining an open set~$\ZariskiOpenSet_{0}$.
  \par
  Now take~$\ZariskiOpenSet$ an arbitrary~$\BaseField$-definable
  nonempty Zariski open set contained in~${\widetilde{\ZariskiOpenSet}
    \cap \ZariskiOpenSet_{0}}$. It suffices to see
  that~$\PolynomialRing{\BaseField}{\TranscendenceBasis}$ is included
  in the ring~${\PolynomialRing{\BaseField}{\StateVariableSet}/
    (\Ideal{\empty}+({\SpecializationVariableSet-\SpecializationPoint}))}$
  for~$\SpecializationPoint$ in~$\ZariskiOpenSet$, which is immediate
  from the fact that~$\SpecializationPoint$ is
  in~$\widetilde{\ZariskiOpenSet}$ and, in particular, there exists a
  component of maximal dimension
  of~${\Ideal{\empty}+({\SpecializationVariableSet -
      \SpecializationPoint})}$ which contains no nonzero polynomial
  pure in the variables~$\TranscendenceBasis$.
\end{proof}
If the ideal~$\Ideal{\empty}$ is prime, the proof of
Lemma~\ref{lemma2} allows us to show a more precise version of the
previous Corollary:
\begin{corollary}
  \label{intersection}
  Let~$\Ideal{\empty}$ be a prime ideal
  in~$\PolynomialRing{\BaseField}{\StateVariableSet}$ and
  let~$\TranscendenceBasis$ and~$\SpecializationVariableSet$ be
  two disjoint subsets of variables
  such that $\{{\TranscendenceBasis,
    \SpecializationVariableSet}\}$ is a transcendence basis of the
  fraction field
  of~${\PolynomialRing{\BaseField}{\StateVariableSet}/\Ideal{\empty}}$. Then,
  there exists a nonempty~$\BaseField$-definable Zariski open
  set~$\ZariskiOpenSet\subset \AffineSpace{\SpecializationVariableSetCardinal}$ such that, for
  every point~$\SpecializationPoint$ in~${\ZariskiOpenSet \cap
    \BaseField^{\SpecializationVariableSetCardinal}}$, the
  ideal~${\Ideal{\empty}|_{\SpecializationPoint}}$ has
  dimension~${\dim\Ideal{\empty}- \SpecializationVariableSetCardinal}$
  and its primary components of maximal dimension are prime ideals
  containing no nonzero polynomial pure in the
  variables~$\TranscendenceBasis$.
\end{corollary}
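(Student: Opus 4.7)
The strategy is to reduce the statement to Lemma~\ref{lemma2} and its proof. Since~$\Ideal{\empty}$ is prime and $\{\TranscendenceBasis,\SpecializationVariableSet\}$ is a transcendence basis of the fraction field of~$\PolynomialRing{\BaseField}{\StateVariableSet}/\Ideal{\empty}$, we have $\dim\Ideal{\empty}=\card\TranscendenceBasis+\SpecializationVariableSetCardinal$, and in particular the canonical morphism~${\PolynomialRing{\BaseField}{\SpecializationVariableSet}\hookrightarrow \PolynomialRing{\BaseField}{\StateVariableSet}/\Ideal{\empty}}$ is injective. Thus Lemma~\ref{lemma2} applies: it provides a nonempty $\BaseField$-definable Zariski open set $\ZariskiOpenSet_{0}\subset \AffineSpace{\SpecializationVariableSetCardinal}$ such that for every $\SpecializationPoint$ in $\ZariskiOpenSet_{0}\cap \BaseField^{\SpecializationVariableSetCardinal}$ the ideal $\Ideal{\empty}|_{\SpecializationPoint}$ has dimension $\dim\Ideal{\empty}-\SpecializationVariableSetCardinal = \card\TranscendenceBasis$ and its maximal-dimensional primary components are prime.

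It remains to show that, on a possibly smaller nonempty open set, none of these maximal primes contains a nonzero polynomial pure in the variables $\TranscendenceBasis$. This is essentially the content of the last paragraphs of the proof of Lemma~\ref{lemma2}, but in our situation the argument simplifies because the prime ideal $\Ideal{\empty}$ is the unique element of the set $\IdealSet$ of that proof, and $\TranscendenceBasis$ may be used directly as the auxiliary transcendence set, with no need of an additional linear change of coordinates. The plan is therefore to invoke the Corollary preceding the statement (applied to $\Ideal{\empty}$, $\TranscendenceBasis$, $\SpecializationVariableSet$) to obtain the injectivity $\PolynomialRing{\BaseField}{\TranscendenceBasis}\hookrightarrow \PolynomialRing{\BaseField}{\StateVariableSet}/(\Ideal{\empty}+(\SpecializationVariableSet-\SpecializationPoint))$ on a nonempty open set $\ZariskiOpenSet_{1}\subset \ZariskiOpenSet_{0}$, and to combine it with the primality of the maximal components given by Lemma~\ref{lemma2}.

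Concretely, for $\SpecializationPoint$ in $\ZariskiOpenSet_{1}$ let $\mathfrak{q}$ be a maximal-dimensional primary component of $\Ideal{\empty}+(\SpecializationVariableSet-\SpecializationPoint)$, which by Lemma~\ref{lemma2} is prime and of dimension $\card\TranscendenceBasis$. Composing the inclusion $\PolynomialRing{\BaseField}{\TranscendenceBasis}\hookrightarrow \PolynomialRing{\BaseField}{\StateVariableSet}/(\Ideal{\empty}+(\SpecializationVariableSet-\SpecializationPoint))$ with the projection onto $\PolynomialRing{\BaseField}{\StateVariableSet}/\mathfrak{q}$ would only potentially fail to be injective if $\mathfrak{q}\cap \PolynomialRing{\BaseField}{\TranscendenceBasis}\neq\{0\}$. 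But the quotient ring $\PolynomialRing{\BaseField}{\StateVariableSet}/\mathfrak{q}$ has Krull dimension $\card\TranscendenceBasis$ and is generated as a $\BaseField$-algebra by the classes of the remaining variables in $\StateVariableSet\setminus\SpecializationVariableSet$ together with the image of $\PolynomialRing{\BaseField}{\TranscendenceBasis}$; the corresponding argument (reproducing the Claim at the end of the proof of Lemma~\ref{lemma2}, which provides the needed algebraicity of each remaining variable over $\PolynomialRing{\BaseField}{\TranscendenceBasis}$ on a further open set $\ZariskiOpenSet_{2}$) forces the injection $\PolynomialRing{\BaseField}{\TranscendenceBasis}\hookrightarrow \PolynomialRing{\BaseField}{\StateVariableSet}/\mathfrak{q}$, i.e.\ $\mathfrak{q}\cap \PolynomialRing{\BaseField}{\TranscendenceBasis}=\{0\}$. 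Taking $\ZariskiOpenSet:=\ZariskiOpenSet_{0}\cap \ZariskiOpenSet_{1}\cap \ZariskiOpenSet_{2}$ concludes the proof.

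The content is essentially a bookkeeping rearrangement of Lemma~\ref{lemma2}, and the only genuine point where the primality of $\Ideal{\empty}$ is used is in ensuring that $\IdealSet=\{\Ideal{\empty}\}$, so that the auxiliary set of linear combinations in the proof of Lemma~\ref{lemma2} can be replaced by the given $\TranscendenceBasis$. I expect no real obstacle beyond careful citation of the intermediate open sets produced in that proof.
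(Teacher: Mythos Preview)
Your proposal is correct and follows essentially the same route as the paper: the paper's proof consists of the single observation that, since $\Ideal{\empty}$ is prime, the set $\IdealSet$ in the proof of Lemma~\ref{lemma2} reduces to $\{\Ideal{\empty}\}$, and the given $\TranscendenceBasis$ may serve as the auxiliary transcendence set there, so the Claim and the final paragraph of that proof apply verbatim. Your detour through the preceding Corollary to obtain $\ZariskiOpenSet_{1}$ is harmless but unnecessary, since the Claim you invoke for $\ZariskiOpenSet_{2}$ already yields the injectivity of $\PolynomialRing{\BaseField}{\TranscendenceBasis}$ into each maximal component, which is strictly stronger than injectivity into the full quotient.
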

\begin{proof}
  Simply observe that in the proof of Lemma~\ref{lemma2} the subset of
  associated primes~$\IdealSet$ is the unitary
  set~$\{\Ideal{\empty}\}$ if~$\Ideal{\empty}$ is assumed to be a
  prime ideal. Therefore the Claim and the remaining part of the proof
  run \textit{mutatis mutandis}.
\end{proof}
Now, we can prove the main result of this appendix:
\begin{theorem}
  \label{esp_reg_seq}
  Let~${\StateVariableSetCardinal \ge 2}$ and~$\StateVariableSet$
  a set of $n$ indeterminates over a
  field~$\BaseField$ of characteristic~$0$. Let~${\{\TheEquation{1},
    \ldots, \TheEquation{r}\}}$ be a reduced regular sequence
  in~$\PolynomialRing{\BaseField}{\StateVariableSet}$ (that is, a
  regular sequence such that the
  ideals~$({\TheEquation{1},\ldots,\TheEquation{\indexa}})$
  in~$\PolynomialRing{\BaseField}{\StateVariableSet}$ are radical
  for~${\indexa=1,\ldots,r}$). Let~$\SpecializationVariableSet$
  be a subset of $s$ many variables in~$\StateVariableSet$, with~${\SpecializationVariableSetCardinal <
    \StateVariableSetCardinal}$, such that the canonical
  map~${\PolynomialRing{\BaseField}{\SpecializationVariableSet}
    \rightarrow \PolynomialRing{\BaseField}{\StateVariableSet}/
    ({\TheEquation{1}, \ldots, \TheEquation{r}})}$ is
  injective. Let us denote by~$\StateVariableSubset$ the set of
  remaining variables~${\StateVariableSet \setminus
    \SpecializationVariableSet}$.
  \par
  Then, there exists a nonempty~$\BaseField$-definable Zariski open
  set~$\ZariskiOpenSet\subset \AffineSpace{\SpecializationVariableSetCardinal}$ such that, for
  every point~$\SpecializationPoint$ in~${\ZariskiOpenSet \cap
    \BaseField^{\SpecializationVariableSetCardinal}}$ and all~${
    \indexa =1,\ldots,r}$, the
  polynomials~${\TheEquation{1}(\SpecializationPoint,\StateVariableSubset),
    \ldots, \TheEquation{\indexa}(\SpecializationPoint,
    \StateVariableSubset)}$ form a reduced regular sequence in the
  polynomial ring~$\PolynomialRing{\BaseField}{\StateVariableSubset}$.
\end{theorem}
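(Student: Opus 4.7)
I would argue by induction on $a$, showing simultaneously that for every $a$ in $\{1,\dots,r\}$, the radicality and regular-sequence properties of the specialized polynomials hold for $\indexa$ generically. The base case $a=0$ is vacuous. For the inductive step, I would apply Lemma~\ref{lemma2} to the (radical, equidimensional, of dimension $n-a$) ideal $\Ideal{a}:=(\TheEquation{1},\dots,\TheEquation{a})$, using the auxiliary variables $\SpecializationVariableSet$. The injectivity hypothesis $\PolynomialRing{\BaseField}{\SpecializationVariableSet}\hookrightarrow\PolynomialRing{\BaseField}{\StateVariableSet}/(\TheEquation{1},\dots,\TheEquation{r})$ lifts immediately to $\PolynomialRing{\BaseField}{\SpecializationVariableSet}\hookrightarrow\PolynomialRing{\BaseField}{\StateVariableSet}/\Ideal{a}$ because $\Ideal{a}\subseteq (\TheEquation{1},\dots,\TheEquation{r})$, so the assumption of Lemma~\ref{lemma2} is met. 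This produces a nonempty $\BaseField$-definable Zariski open set $\ZariskiOpenSet_a \subset\AffineSpace{\SpecializationVariableSetCardinal}$ such that for every $\SpecializationPoint$ in $\ZariskiOpenSet_a\cap\BaseField^{\SpecializationVariableSetCardinal}$, the ideal $\Ideal{a}|_{\SpecializationPoint}$ has dimension $(n-a)-\SpecializationVariableSetCardinal$ and its primary components of maximal dimension are prime.

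Next, I would use this dimension count to upgrade the conclusion from equidimensionality to a regular sequence plus radicality. Since $\Ideal{a}|_{\SpecializationPoint}$ is generated by the $a$ polynomials $\TheEquation{1}(\SpecializationPoint,\StateVariableSubset),\dots,\TheEquation{a}(\SpecializationPoint,\StateVariableSubset)$ in the $(n-\SpecializationVariableSetCardinal)$-variate polynomial ring $\PolynomialRing{\BaseField}{\StateVariableSubset}$ and has codimension exactly $a$ in that ring, Macaulay's unmixedness theorem (applicable since $\PolynomialRing{\BaseField}{\StateVariableSubset}$ is Cohen--Macaulay) ensures that these polynomials form a regular sequence and that the resulting ideal is unmixed. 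Unmixedness means that every associated prime is minimal and of the same dimension $n-\SpecializationVariableSetCardinal-a$, so the conclusion of Lemma~\ref{lemma2} about the \emph{maximal-dimensional} primary components being prime in fact applies to \emph{all} primary components. Hence $\Ideal{a}|_{\SpecializationPoint}$ is radical.

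Finally, I would take the intersection $\ZariskiOpenSet:=\bigcap_{a=1}^{r}\ZariskiOpenSet_a$, which is again a nonempty $\BaseField$-definable Zariski open set. For any $\SpecializationPoint$ in $\ZariskiOpenSet\cap \BaseField^{\SpecializationVariableSetCardinal}$ the above argument applies simultaneously for every $\indexa$, giving the desired reduced regular sequence property of $\TheEquation{1}(\SpecializationPoint,\StateVariableSubset),\dots,\TheEquation{a}(\SpecializationPoint,\StateVariableSubset)$ for all $\indexa=1,\dots,r$.

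\textbf{Main obstacle.} The technical heart is hidden inside Lemma~\ref{lemma2}: transferring the ``maximal-dimensional primary components are prime'' conclusion to a full radicality statement relies on the codimension not dropping under the specialization $\SpecializationVariableSet\mapsto\SpecializationPoint$. I would make this explicit by combining the dimension-of-fibers argument (already used to construct $\ZariskiOpenSet_a$) with the fact that an ideal generated by $a$ elements cannot have codimension exceeding $a$, so the ideal $\Ideal{a}|_{\SpecializationPoint}$ attains codimension exactly $a$ generically and Cohen--Macaulayness then forbids embedded components. Once this point is handled cleanly, the induction and the final intersection of open sets are routine.
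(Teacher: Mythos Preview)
Your proposal is correct and follows essentially the same route as the paper: apply Lemma~\ref{lemma2} to the radical complete-intersection ideals $(\TheEquation{1},\dots,\TheEquation{a})$, then use the Cohen--Macaulay property of the polynomial ring together with Macaulay's unmixedness theorem to conclude that the specialized generators form a regular sequence and that all primary components (not just the top-dimensional ones) are prime. The only organizational difference is that the paper phrases this as an induction on $r$ (using the inductive hypothesis for levels $1,\dots,r$ and one further application of Lemma~\ref{lemma2} to reach level $r+1$), whereas you apply Lemma~\ref{lemma2} once for each $a$ and intersect the resulting open sets; this simply unrolls the induction and yields the same open set.
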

\begin{proof}
  We prove this theorem by recurrence in~$r$.
  \par
  If~${r=1}$, since the polynomial $\TheEquation{1}$ is assumed to be
  square-free, we take~$\ZariskiOpenSet$ as the projection
  of~${\{\discr_{\StateVariableSubsetElement{\empty}}(\TheEquation{1})\ne
    0\}}$ to $\AffineSpace{\SpecializationVariableSetCardinal}$,
  where~$\StateVariableSubsetElement{\empty}$ is any variable
  in~$\StateVariableSubset$ which appears in~$\TheEquation{1}$.
  \par
  Assume the result holds for an integer~${r\ge
    1}$. Let~${\TheEquation{1}, \ldots, \TheEquation{r+1}}$ be a
  regular sequence in~$\PolynomialRing{\BaseField}{\StateVariableSet}$
  such that the
  ideals~$({\TheEquation{1},\ldots,\TheEquation{\indexa}})$ are
  radical for~${\indexa=1,\ldots,{r+1}}$ and
  let~$\SpecializationVariableSet$ be a subset of~$\StateVariableSet$
  such that the canonical morphism ~${\PolynomialRing{\BaseField}{\SpecializationVariableSet}
    \hookrightarrow \PolynomialRing{\BaseField}{\StateVariableSet}/
    ({\TheEquation{1}, \ldots, \TheEquation{r+1}})}$ is injective. In
  particular, ${\PolynomialRing{\BaseField}{\SpecializationVariableSet}
    \hookrightarrow \PolynomialRing{\BaseField}{\StateVariableSet}/
    ({\TheEquation{1},\ldots, \TheEquation{r}})}$ is injective
  too.  Hence, by the induction hypothesis, there exists a
  nonempty~$\BaseField$-definable Zariski open
  subset~$\ZariskiOpenSet_{1}$
  of~$\AffineSpace{\SpecializationVariableSetCardinal}$ such that, for
  any point~$\AffinePoint{P}_{1}$ in~$\ZariskiOpenSet_{1}$ and for
  any~${\indexa=1,\ldots ,r}$, the
  polynomials~${\TheEquation{1}(\AffinePoint{P}_{1},\StateVariableSubset),
    \ldots,
    \TheEquation{\indexa}(\AffinePoint{P}_{1},\StateVariableSubset)}$
  form a regular sequence which generates a radical ideal
  in~$\PolynomialRing{\BaseField}{\StateVariableSubset}$.
  \par
  From Macaulay's unmixedness theorem (see for instance
  \cite[Chapter~VI, \S3, Theorem~3.14]{kunz:1985}), the
  ideal~${\Ideal{\empty}:=(\TheEquation{1}, \ldots,
    \TheEquation{r+1})}$ is equidimensional of
  dimension~${\StateVariableSetCardinal-(r+1)}$ and so, the hypotheses
  of Lemma~\ref{lemma2} are met for the ideal $I$ and the
  variables~$\SpecializationVariableSet$. Then, there exists
  a~$\BaseField$-definable nonempty Zariski open
  subset~$\ZariskiOpenSet_{2}$
  of~$\AffineSpace{\SpecializationVariableSetCardinal}$, such that for
  any point~$\AffinePoint{P}_{2}$ in~${\ZariskiOpenSet_{2}\cap
    \BaseField^{\SpecializationVariableSetCardinal}}$, the primary
  components of maximal dimension of~$({{\TheEquation{1}(\AffinePoint{P}_{2},
      \StateVariableSubset), \ldots,
      \TheEquation{r+1}(\AffinePoint{P}_{2}, \StateVariableSubset)}})\subset \PolynomialRing{\BaseField}{\StateVariableSubset}$
  are prime ideals of dimension~${\dim \Ideal{\empty} -
    \SpecializationVariableSetCardinal=\StateVariableSetCardinal-(r+1)-\SpecializationVariableSetCardinal}$.
  \par
  Take $\ZariskiOpenSet:=\ZariskiOpenSet_{1} \cap \ZariskiOpenSet_{2}$. We will show that this open set verifies the statement of the theorem for~${r+1}$.  Let~$\SpecializationPoint$ be a point in~${\ZariskiOpenSet \cap
    \BaseField^{\SpecializationVariableSetCardinal}}$. Since~$\TheEquation{1}(\SpecializationPoint,\StateVariableSubset),\ldots,
  \TheEquation{r}(\SpecializationPoint,\StateVariableSubset)$ is a
  regular sequence that generates an equidimensional radical ideal
  (because~$\SpecializationPoint$ is in~$\ZariskiOpenSet_{1}$), in
  order to prove
  that~$\TheEquation{r+1}(\SpecializationPoint,\StateVariableSubset)$
  is not a zero divisor
  modulo~${(\TheEquation{1}(\SpecializationPoint,\StateVariableSubset),
    \ldots,
    \TheEquation{r}(\SpecializationPoint,\StateVariableSubset))}$, it
  suffices to show that the dimension drops by~$1$ when this
  polynomial is added. This follows directly from the fact
  that~$\SpecializationPoint$ also belongs
  to~$\ZariskiOpenSet_2$. Therefore~${\TheEquation{1}(\SpecializationPoint,\StateVariableSubset),
    \ldots,
    \TheEquation{r+1}(\SpecializationPoint,\StateVariableSubset)}$ is
  a regular sequence
  in~$\PolynomialRing{\BaseField}{\StateVariableSubset}$. In
  particular, the generated ideal is unmixed by Macaulay's theorem,
  and then it is radical since~$\SpecializationPoint\in\ZariskiOpenSet_{2}$.
\end{proof}
\section{Existence and uniqueness of solutions}
\label{app:exuniq}
Several previous articles consider the problem of the existence and
uniqueness of solutions for \textit{first order}
implicit \DifferentialAlgebraicSystem\ systems (see for
instance~\cite{rabier:1994, pritchard:2003, pritchard:2007,
  dalfonso:2009}). By adding new variables for the higher order
derivatives in the usual way, these results can be extended to
\DifferentialAlgebraicSystem\ systems of arbitrary order. For
instance, let~$\DifferentialSystem$ be the
\DifferentialAlgebraicSystem\ system introduced in
Section~\ref{sistema_original} assuming that the hypotheses of
Sections~\ref{basic} and~\ref{defindex} are fulfilled and that~$\BaseField$ is a
subfield of~$\Field{C}$. Then we have the following generalization
of~\cite[Theorem 24]{dalfonso:2009}:
\begin{theorem}
  \label{existeunaordene}
  Let $\Variety{V}_{0}\subset \AffineSpace{\StateVariableSetCardinal
    \DifferentialSystemOrder{\empty}}$ and $\Variety{V}_{1}\subset\AffineSpace{
    \StateVariableSetCardinal(\DifferentialSystemOrder{\empty}+1)}$ be the algebraic varieties defined by
  the ideals~${\DifferentialIdeal{\EquationSet} \cap
    \ProlongatedRing{\DifferentialSystemOrder{\empty}-1}}$
  and~${\DifferentialIdeal{\EquationSet} \cap
    \ProlongatedRing{\DifferentialSystemOrder{\empty}}}$ respectively,
  and let~${\pi: \Variety{V}_{1}\rightarrow \Variety{V}_{0}}$ be the
  projection to the first~${\StateVariableSetCardinal
    \DifferentialSystemOrder{\empty}}$ coordinates.
  \par
  Then, for every regular point~$\AffinePoint{\StateVariableSet}:={(\AffinePoint{\StateVariableSet}_{0}, \dots,
    \AffinePoint{\StateVariableSet}_{\DifferentialSystemOrder{\empty}-1},
    \AffinePoint{\StateVariableSet}_{\DifferentialSystemOrder{\empty}})}$
  in~$\Variety{V}_{1}$ where the projection~$\pi$ is unramified, there
  exist~${\varepsilon >0}$, a relative open
  neighborhood~$\ZariskiOpenSet\subset\Variety{V}_{1}$  of
  $\AffinePoint{\StateVariableSet}$ and a unique analytic
  function ${\AnalyticSolutionSet:
    \OpenInterval{-\varepsilon}{\varepsilon} \to
    \Field{C}^{\StateVariableSetCardinal}}$ which is a solution
  of~$\DifferentialSystem$ with initial
  condition $${\left(\AnalyticSolutionSet(0), \ldots,
      \AnalyticSolutionSet^{(\DifferentialSystemOrder{\empty}-1)}(0)\right)=(\AffinePoint{\StateVariableSet}_{0},
    \ldots, \AffinePoint{\StateVariableSet}_{\DifferentialSystemOrder{\empty}-1})}$$ such
  that~${\left(\AnalyticSolutionSet(t), \dots,
      \AnalyticSolutionSet^{(\DifferentialSystemOrder{\empty}-1)}(t),\AnalyticSolutionSet^{(\DifferentialSystemOrder{\empty})}(t)
    \right)}$ is in~$\ZariskiOpenSet$ for all~$t$.
\end{theorem}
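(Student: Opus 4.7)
The plan is to reduce the statement to the first-order case, which is \cite[Theorem 24]{dalfonso:2009}, by the standard trick of introducing new variables for intermediate derivatives, and then to transport the conclusion back through the reduction.

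First, I introduce $\DifferentialSystemOrder{\empty}$ new blocks of differential indeterminates $Z_{0},Z_{1},\dots,Z_{\DifferentialSystemOrder{\empty}-1}$, each $Z_{k}=\{z_{k,1},\dots,z_{k,\StateVariableSetCardinal}\}$ being a copy of $\StateVariableSet$, and I associate to $\DifferentialSystem$ the first-order system
\[
\Gamma:\ \timederivative{Z_{k}}{1}-Z_{k+1}=0\ (k=0,\dots,\DifferentialSystemOrder{\empty}-2),\quad
\TheEquation{\indexa}\!\left(Z_{0},Z_{1},\dots,Z_{\DifferentialSystemOrder{\empty}-1},\timederivative{Z_{\DifferentialSystemOrder{\empty}-1}}{1}\right)=0\ (1\le \indexa\le \EquationSetCardinal).
\]
The second step is to check that $\Gamma$ inherits the hypotheses made on $\DifferentialSystem$ in Section \ref{sistema_original}: the substitution $z_{k,\indexa}\mapsto \timederivative{\StateVariable{\indexa}}{k}$ induces a natural identification between prolongation rings of $\Gamma$ (above a certain order) and those of $\DifferentialSystem$, under which the prolongation ideals of $\Gamma$ correspond exactly to the prolongation ideals of $\DifferentialSystem$. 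Hence primality and quasi-regularity transfer from $\DifferentialSystem$ to $\Gamma$, and the analogues of the varieties $\Variety{V}_{0}$ and $\Variety{V}_{1}$ for $\Gamma$ (namely the jet varieties at orders $0$ and $1$ of $\Gamma$) coincide, as closed subvarieties of their ambient affine spaces, with $\Variety{V}_{0}$ and $\Variety{V}_{1}$ respectively, and the projection between them coincides with $\pi$.

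In the third step I apply \cite[Theorem 24]{dalfonso:2009} to $\Gamma$ at the point that corresponds to $\AffinePoint{\StateVariableSet}=(\AffinePoint{\StateVariableSet}_{0},\dots,\AffinePoint{\StateVariableSet}_{\DifferentialSystemOrder{\empty}})$ under the identification above. Since $\AffinePoint{\StateVariableSet}$ is regular in $\Variety{V}_{1}$ and $\pi$ is unramified there, the hypotheses of that theorem are satisfied, so I obtain $\varepsilon>0$, a relative open neighborhood $\ZariskiOpenSet\subset\Variety{V}_{1}$ of $\AffinePoint{\StateVariableSet}$, and a unique analytic solution $t\mapsto (Z_{0}(t),\dots,Z_{\DifferentialSystemOrder{\empty}-1}(t))$ of $\Gamma$ on $\OpenInterval{-\varepsilon}{\varepsilon}$ with $Z_{k}(0)=\AffinePoint{\StateVariableSet}_{k}$ for $k=0,\dots,\DifferentialSystemOrder{\empty}-1$ and with $(Z_{0}(t),\dots,Z_{\DifferentialSystemOrder{\empty}-1}(t),\timederivative{Z_{\DifferentialSystemOrder{\empty}-1}}{1}(t))\in\ZariskiOpenSet$ for every $t$.

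Finally, I set $\AnalyticSolutionSet(t):=Z_{0}(t)$. The first $\DifferentialSystemOrder{\empty}-1$ equations of $\Gamma$ give $Z_{k}(t)=\AnalyticSolutionSet^{(k)}(t)$ for $k=0,\dots,\DifferentialSystemOrder{\empty}-1$ and so $\timederivative{Z_{\DifferentialSystemOrder{\empty}-1}}{1}(t)=\AnalyticSolutionSet^{(\DifferentialSystemOrder{\empty})}(t)$; the remaining equations of $\Gamma$ then read $\TheEquation{\indexa}(\AnalyticSolutionSet(t),\dots,\AnalyticSolutionSet^{(\DifferentialSystemOrder{\empty})}(t))=0$, i.e.\ $\AnalyticSolutionSet$ is a solution of $\DifferentialSystem$ with the prescribed initial conditions. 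Conversely, any such solution of $\DifferentialSystem$ gives, by setting $Z_{k}:=\AnalyticSolutionSet^{(k)}$, a solution of $\Gamma$ with the same initial data and with jet in $\ZariskiOpenSet$, so uniqueness for $\DifferentialSystem$ follows from uniqueness for $\Gamma$.

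The only real point requiring care is the second step: one must verify that the identification between the prolonged rings of $\Gamma$ and $\DifferentialSystem$ is compatible with primality, quasi-regularity and the jet varieties used to formulate \cite[Theorem 24]{dalfonso:2009}, so that the unramifiedness hypothesis on $\pi$ is exactly what is required to invoke that theorem. This is essentially bookkeeping, but it is where the passage from higher order to first order has to be made rigorous.
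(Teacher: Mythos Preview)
Your proposal is correct and follows essentially the same approach as the paper: reduce to the first-order case by introducing new variables $Z_{k}$ for the intermediate derivatives, verify that the resulting first-order system $\Gamma$ inherits the needed hypotheses (primality, quasi-regularity, and the correspondence between the relevant jet varieties and projections), apply \cite[Theorem~24]{dalfonso:2009}, and translate the solution back. The paper carries out the ``bookkeeping'' you mention more explicitly---it writes down the ring map $\Upsilon$, identifies $\mathfrak{A}\cap\PolynomialRing{\BaseField}{Z}$ and $\mathfrak{A}\cap\PolynomialRing{\BaseField}{Z,\dot{Z}}$ with the images of $\DifferentialIdeal{\EquationSet}\cap\ProlongatedRing{\DifferentialSystemOrder{\empty}-1}$ and $\DifferentialIdeal{\EquationSet}\cap\ProlongatedRing{\DifferentialSystemOrder{\empty}}$, and computes the block Jacobian to see that unramifiedness of $\pi$ is equivalent to unramifiedness of the corresponding projection for $\Gamma$---but the strategy is identical to yours.
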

\begin{proof}
  We make a straightforward change of variables in order to obtain an
  equivalent first-order system: for each~$\indexa\in{\{0,\dots,
    \DifferentialSystemOrder{\empty}-1\}}$, consider a new
  set~$\NewVariableSet_{\indexa}$ of
  variables~${(\NewVariable{\indexa,1}, \dots,
    \NewVariable{\indexa,\StateVariableSetCardinal})}$ representing
  the derivatives~$\timederivative{\StateVariableSet}{\indexa}$, and
  let~$\FirstOrderDifferentialSystem$ be the first-order
  \DifferentialAlgebraicSystem\ system
  \begin{equation}
    \label{orden1}
    \FirstOrderDifferentialSystem :=
    \left\{
      \begin{array}[c]{ccl}%
        \NewVariable{\indexa,\indexb}-
        \timederivative{\NewVariable{\indexa-1,\indexb}}{1}
        &=& 0,\qquad \indexa=1,\dots,
        \DifferentialSystemOrder{\empty}-1,\quad \indexb=1,\dots,
        \StateVariableSetCardinal,
        \\[\medskipamount]
        \TheEquation{1}(\NewVariableSet, \timederivative{\NewVariableSet}{1}) &=& 0, \\
        &\vdots &\\
        \TheEquation{\StateVariableSetCardinal}(\NewVariableSet, \timederivative{\NewVariableSet}{1}) &=& 0, \\
      \end{array}
    \right.
  \end{equation}
  where~$\NewVariableSet$ denotes~${\NewVariableSet_{0}, \dots,
    \NewVariableSet_{\DifferentialSystemOrder{\empty}-1}}$. We apply
  now to this system the existence and uniqueness result
  in~\cite[Theorem 24]{dalfonso:2009}, which holds in the
  case~${\DifferentialSystemOrder{\empty}=1}$. In order to do so, let
  us verify that the required assumptions hold.
  \par
  Denote by~$\mathfrak{A}$ the differential ideal associated with the   system~$\FirstOrderDifferentialSystem$  and consider
  the map~${\Upsilon :
    \DifferentialPolynomialRing{\BaseField}{\StateVariableSet} \to
    \DifferentialPolynomialRing{\BaseField}{\NewVariableSet}}$ defined
  by
  \[
  \Upsilon(\timederivative{\StateVariable{\indexa}}{\indexb})
  =
  \begin{cases}
    \NewVariable{\indexa,\indexb} & \ \textrm{if}\ \indexa < \DifferentialSystemOrder{\empty},\\
    \timederivative{\NewVariable{\DifferentialSystemOrder{\empty}-1,\indexb}}{\indexa-\DifferentialSystemOrder{\empty}+1}
    &\ \textrm{if}\ \indexa\ge \DifferentialSystemOrder{\empty}.
  \end{cases}
  \]
  Note that~$\Upsilon$ is an injection that
  maps~${\DifferentialIdeal{\EquationSet} =[\TheEquation{1},\dots,
    \TheEquation{\StateVariableSetCardinal}]}$ to~$\mathfrak{A}$.
    For
  each differential polynomial~$\TheEquation{\empty}$
  in~$\DifferentialPolynomialRing{\BaseField}{\StateVariableSet}$, the
  expression $
  \Upsilon({\dot{\TheEquation{\empty}}})-(\Upsilon({\TheEquation{\empty}}))'$ {belongs to
    the differential ideal} $\DifferentialIdeal{
    \NewVariable{\indexa,\indexb} -
    \timederivative{\NewVariable{\indexa-1,\indexb}}{1}\  ;\  1\le \indexa
    \le \DifferentialSystemOrder{\empty}-1,\ 1\le \indexb \le
    \StateVariableSetCardinal}.$

  This implies that the relation~${\mathfrak{A} =
    \Upsilon(\DifferentialIdeal{\EquationSet}) + \DifferentialIdeal{
      \NewVariable{\indexa,\indexb} -
      \timederivative{\NewVariable{\indexa-1,\indexb}}{1}\ ;\  1\le
      \indexa \le \DifferentialSystemOrder{\empty}-1, 1\le \indexb \le
      \StateVariableSetCardinal}}$ holds. In particular, we have the
  isomorphism~${\DifferentialPolynomialRing{\BaseField}{\StateVariableSet}
    /\DifferentialIdeal{\EquationSet} \simeq
    \DifferentialPolynomialRing{\BaseField}{\NewVariableSet} /
    \mathfrak{A}}$ and then, if~$\DifferentialIdeal{\EquationSet}$ is a
  prime ideal, then so is~$\mathfrak{A}$.
  \par
  Moreover, we have the identities:
  \[
  \begin{array}{lcl}
    \mathfrak{A} \cap
    \PolynomialRing{\BaseField}{\NewVariableSet} &=&
    \Upsilon \left(\DifferentialIdeal{\EquationSet} \cap
      \ProlongatedRing{\DifferentialSystemOrder{\empty}-1} \right),\\
    \mathfrak{A} \cap \PolynomialRing{\BaseField}{\NewVariableSet,
      \timederivative{\NewVariableSet}{1}} &=&
    ({\NewVariable{\indexa,\indexb}-
      \timederivative{\NewVariable{\indexa-1,\indexb}}{1}\ ;
      \ 1\le \indexa \le \DifferentialSystemOrder{\empty}-1,\ 1\le \indexb \le \StateVariableSetCardinal})+
    \Upsilon \left(\DifferentialIdeal{\EquationSet} \cap
      \ProlongatedRing{\DifferentialSystemOrder{\empty}} \right).
  \end{array}
  \]
  Therefore, if the
  polynomials~$\mathtt{F}_{\DifferentialSystemOrder{\empty}-1}$
  in~$\ProlongatedRing{\DifferentialSystemOrder{\empty}-1}$
  and~$\mathtt{F}_{\DifferentialSystemOrder{\empty}}$
  in~$\ProlongatedRing{\DifferentialSystemOrder{\empty}}$ are
  generators of~${\DifferentialIdeal{\EquationSet} \cap
    \ProlongatedRing{\DifferentialSystemOrder{\empty}-1}}$
  and~${\DifferentialIdeal{\EquationSet} \cap
    \ProlongatedRing{\DifferentialSystemOrder{\empty}}}$ respectively,
  then~$\Upsilon(\mathtt{F}_{\DifferentialSystemOrder{\empty}-1})$ and
  \[{\mathtt{G} := \{ \NewVariable{1,1}-
    \timederivative{\NewVariable{0,1}}{1},\dots,
    \NewVariable{1,\StateVariableSetCardinal}-
    \timederivative{\NewVariable{0,\StateVariableSetCardinal}}{1},
    \ldots,
    \timederivative{\NewVariable{\DifferentialSystemOrder{\empty}-2,1}}{1}
    - \NewVariable{\DifferentialSystemOrder{\empty}-1,1}, \ldots,
    \timederivative{\NewVariable{\DifferentialSystemOrder{\empty}-2,\StateVariableSetCardinal}}{1}
    -
    \NewVariable{\DifferentialSystemOrder{\empty}-1,\StateVariableSetCardinal}\}
    \cup \Upsilon(\mathtt{F}_{\DifferentialSystemOrder{\empty}})}\]
      are generators of~${\mathfrak{A} \cap
    \PolynomialRing{\BaseField}{\NewVariableSet}}$ and~${\mathfrak{A}
    \cap \PolynomialRing{\BaseField}{\NewVariableSet,
      \timederivative{\NewVariableSet}{1}}}$ respectively.

  In
  particular, the Jacobian
  submatrix~$\tPartialDerivative{\mathtt{G}}{\timederivative{\NewVariableSet}{1}}{1}$
  has the block form
  \[
  \left(
    \begin{array}{cc}
      -\textrm{Id}_{(\DifferentialSystemOrder{\empty}-1) \StateVariableSetCardinal} & 0\\
      0 & \Upsilon\left(D_{\timederivative{\StateVariableSet}{\DifferentialSystemOrder{\empty}}} \mathtt{F}_{\DifferentialSystemOrder{\empty}}\right)
    \end{array}
  \right)\!.
  \]
  Thus, if ${V}({{\mathfrak{A} \cap
      \PolynomialRing{\BaseField}{\NewVariableSet,
        \timederivative{\NewVariableSet}{1}}}})$
  in~$\AffineSpace{2\StateVariableSetCardinal
    \DifferentialSystemOrder{\empty}}$ and ${V}({\mathfrak{A} \cap
    \PolynomialRing{\BaseField}{\NewVariableSet}})$ in~$\AffineSpace{
    \StateVariableSetCardinal \DifferentialSystemOrder{\empty}}$
  are the varieties defined by
  the specified ideals,
  and~$\widetilde{\AffinePoint{\StateVariableSet}}$ is the point
  in~${V}({{\mathfrak{A} \cap
      \PolynomialRing{\BaseField}{\NewVariableSet,
        \timederivative{\NewVariableSet}{1}}}})$ corresponding
  to~$\AffinePoint{\StateVariableSet}\in\Variety{V}_{1}$, the
  unramifiedness of the projection~${\pi : \Variety{V}_{1}\to
    \Variety{V}_{0}}$ at~$\AffinePoint{\StateVariableSet}$
  is equivalent to the unramifiedness of the
  projection~${\widetilde{\pi}:  {V}({{\mathfrak{A} \cap
        \PolynomialRing{\BaseField}{\NewVariableSet,
          \timederivative{\NewVariableSet}{1}}}})} \to
    {V}({\mathfrak{A} \cap
      \PolynomialRing{\BaseField}{\NewVariableSet}})$
  at~$\widetilde{\AffinePoint{\StateVariableSet}}$. Similarly, the
  fact that~$\AffinePoint{\StateVariableSet}$ is a regular point
  of~$\Variety{V}_{1}$ implies
  that~$\widetilde{\AffinePoint{\StateVariableSet}}$ is a regular
  point of~$
    {V}({{\mathfrak{A} \cap
        \PolynomialRing{\BaseField}{\NewVariableSet,
          \timederivative{\NewVariableSet}{1}}}})$.
\end{proof}
\begin{remark}
  \label{exunquasireg}
  In the case of first order \DifferentialAlgebraicSystem\ systems,
  the existence and uniqueness of solutions as stated
  in~\cite[Theorem~24]{dalfonso:2009} can also be extended to the case
  when the ideal~$\DifferentialIdeal{\EquationSet}$ is not
  prime, but the system~$\DifferentialSystem$ is quasi-regular.  The
  result follows as in the proof of~\cite[Theorem~24]{dalfonso:2009}: if ${\PrimeIdeal{\empty}}$ denotes a minimal
  prime differential ideal of~$\DifferentialIdeal{\EquationSet}$, then
  ${\PrimeIdeal{\empty}}$ plays the same role as the
  ideal~$\mathfrak{Q}$ in that proof and we can take~${d =
    \ord({{\PrimeIdeal{\empty}}})}$.
\end{remark}
\begin{small}
  \bibliographystyle{acm} \bibliography{DAJOSS02}

\def\gathen#1{{#1}}\def\cprime{$'$} \def\gathen#1{{#1}}\def\cprime{$'$}
\begin{thebibliography}{10}

\bibitem{bostan:2007}
{\sc Bostan, A., Chyzak, F., Ollivier, F., Schost, {\'E}., Salvy, B., and
  Sedoglavic, A.}
\newblock Fast computation of power series solutions of systems of differential
  equations.
\newblock In {\em Proceedings of the 18th ACM-SIAM Symposium on Discrete
  Algorithms\/} (New Orleans, Louisiana, USA, 2007), pp.~1012 -- 1021.

\bibitem{blop:2009}
{\sc Boulier, F., Lazard, D., Ollivier, F., and Petitot, M.}
\newblock Computing representation for radicals of finitely generated
  differential ideals.
\newblock {\em AAECC 20}, 1 (2009), 73 -- 121.

\bibitem{brenan:1983}
{\sc Brenan, K.~E.}
\newblock {\em Stability and convergence for higher index differential
  algebraic equations with applications to trajectory control}.
\newblock PhD thesis, Department of Mathematics, University of California,
  1983.

\bibitem{brenan:1996}
{\sc Brenan, K.~E., Campbell, S. L.~V., and Petzold, L.~R.}
\newblock {\em Numerical Solution of Initial-Value Problems in
  Differential-Algebraic Equations}.
\newblock No.~14 in SIAM Classics in Applied Mathematics. Society for
  Industrial and Applied Mathematics, 1996.

\bibitem{campbell:1995}
{\sc Campbell, S. L.~V., and Gear, C.~W.}
\newblock The index of general nonlinear {DAE}s.
\newblock {\em Numerische Mathematik 72}, 2 (Dec. 1995), 173--196.

\bibitem{CluzeauH:2003}
{\sc Cluzeau, T., and Hubert, E.}
\newblock Resolvent representation for regular differential ideals.
\newblock {\em Appl. Algebra Eng. Commun. Comput. 13}, 5 (2003), 395--425.

\bibitem{CluzeauH:2008}
{\sc Cluzeau, T., and Hubert, E.}
\newblock Probabilistic algorithms for computing resolvent representations of
  regular differential ideals.
\newblock {\em Appl. Algebra Eng. Commun. Comput. 19}, 5 (2008), 365--392.

\bibitem{corless-ilie:2009}
{\sc Corless, R.~M., and Ilie, S.}
\newblock Polynomial cost for solving ivp for high-index dae.
\newblock {\em BIT Numerical Mathematics 48\/} (2008), 29 -- 49.

\bibitem{dahan:2008}
{\sc Dahan, X., Jin, X., Moreno~Maza, M., and Schost, E.}
\newblock Change of order for regular chains in positive dimension.
\newblock {\em Theoretical Computer Science 392\/} (2008), 37--65.

\bibitem{dalfonso:2009}
{\sc D'Alfonso, L., Jeronimo, G., Massaccesi, G., and Solern{\'o}, P.}
\newblock On the index and the order of quasi-regular implicit systems of
  differential equations.
\newblock {\em Linear Algebra and its Applications 430}, 8-9 (Apr. 2009),
  2102--2122.

\bibitem{dalfonso:2006}
{\sc D'Alfonso, L., Jeronimo, G., and Solern{\'o}, P.}
\newblock On the complexity of the resolvent representation of some prime
  differential ideals.
\newblock {\em Journal of Complexity 22}, 3 (June 2006), 396--430.

\bibitem{dalfonso:2008}
{\sc D'Alfonso, L., Jeronimo, G., and Solern{\'o}, P.}
\newblock A linear algebra approach to the differentiation index of generic
  {DAE} systems.
\newblock {\em Applicable Algebra in Engineering, Communication and Computing
  19}, 6 (Dec. 2008), 441--473.

\bibitem{DuLe:2006:cpkpsss}
{\sc Durvye, C., and Lecerf, G.}
\newblock A concise proof of the kronecker polynomial system solver from
  scratch.
\newblock {\em Expositiones Mathematicae 26}, 2 (2007).
\newblock doi:10.1016/j.exmath.2007.07.001.

\bibitem{fliess:1995}
{\sc Fliess, M., L{\'e}vine, J., Martin, {\relax Ph}., and Rouchon, P.}
\newblock Implicit differential equations and {L}ie-{B}{\"a}cklund mappings.
\newblock In {\em Proceedings of the 34th IEEE Conference on Decision and
  Control\/} (New Orleans, Louisiana, USA, Dec. 1995), vol.~3, pp.~2704--2709.

\bibitem{gear:1988}
{\sc Gear, C.~W.}
\newblock Differential-algebraic equation index transformations.
\newblock {\em SIAM Journal on Scientific and Statistical Computing 9}, 1 (Jan.
  1988), 39--47.

\bibitem{gear:1989}
{\sc Gear, C.~W.}
\newblock {DAE}s: {ODE}s with constraints and invariants.
\newblock In {\em Numerical methods for ordinary differential equations\/}
  (L'Aquila, Italy, Sept. 1989), vol.~1386 of {\em Lecture Notes in
  Mathematics}, Springer, pp.~54--68.

\bibitem{giusti:2001}
{\sc Giusti, M., Lecerf, G., and Salvy, B.}
\newblock A {G}r{\"o}bner free alternative for polynomial system solving.
\newblock {\em Journal of Complexity 17}, 1 (Mar. 2001), 154--211.

\bibitem{heintz:2000}
{\sc Heintz, J., Krick, T., Puddu, S., Sabia, J., and Waissbein, A.}
\newblock Deformation techniques for efficient polynomial equation solving.
\newblock {\em Journal of Complexity 16}, 1 (Mar. 2000), 70--109.

\bibitem{hodge:1994}
{\sc Hodge, W. V.~D., and Pedoe, D.}
\newblock {\em Methods of algebraic geometry}, vol.~2.
\newblock Cambridge University Press, 1994.

\bibitem{jacobi:1865}
{\sc Jacobi, C. G.~J.}
\newblock De investigando ordine systematis aequationum differentialum
  vulgarium cujuscunque.
\newblock {\em Borchardt Journal f{\"u}r die reine und angewandte Mathematik
  LXIV}, 4 (1865), 297--320.
\newblock English translation in \cite{ollivier:2009}.

\bibitem{ollivier:2009}
{\sc Jacobi, C. G.~J.}
\newblock Looking for the order of a system of arbitrary ordinary differential
  equations.
\newblock {\em Applicable Algebra in Engineering, Communication and Computing
  20}, 1 (Apr. 2009), 7--32.
\newblock Translated from the Latin by F.~Ollivier.

\bibitem{johnson:1978}
{\sc Johnson, J.}
\newblock Systems of~$n$ partial differential equations in~$n$ unknown
  functions: the conjecture of {M.} {J}anet.
\newblock {\em Transactions of the American Mathematical Society 242\/} (Aug.
  1978), 329--334.

\bibitem{kolchin:1973}
{\sc Kolchin, E.~R.}
\newblock {\em Differential algebra and algebraic groups}, vol.~54 of {\em Pure
  and applied Mathematics}.
\newblock Academic press, New York, 1973.

\bibitem{kondratieva:1982}
{\sc Kondratieva, M.~V., Mikhalev, A.~V., and Pankratiev, E.~V.}
\newblock On {J}acobi's bound for systems of differential polynomials.
\newblock {\em Algebra 79\/} (1982), 75--85.

\bibitem{kondratieva:2009}
{\sc Kondratieva, M.~V., Mikhalev, A.~V., and Pankratiev, E.~V.}
\newblock Jacobi's bound for independent systems of algebraic partial
  differential equations.
\newblock {\em AAECC 20}, 1 (2009), 65 -- 71.

\bibitem{kunkel:2006}
{\sc Kunkel, P., and Mehrmann, V.~L.}
\newblock {\em Differential-algebraic equations. Analysis and numerical
  solution}.
\newblock European Mathematical Society, 2006.

\bibitem{kunz:1985}
{\sc Kunz, E.}
\newblock {\em Introduction to commutative algebra and algebraic geometry}.
\newblock Birkh{\"a}user, 1985.

\bibitem{lang:2002}
{\sc Lang, S.}
\newblock {\em Algebra}, 3~ed., vol.~211 of {\em Graduate Texts in
  Mathematics}.
\newblock Springer, 2002.

\bibitem{levey:1994}
{\sc Le~Vey, G.}
\newblock Differential algebraic equations a new look at the index.
\newblock Tech. Rep. 808, Institut de Recherche en Informatique et en
  Automatique, 1994.

\bibitem{lecerf:2003}
{\sc Lecerf, G.}
\newblock Computing the equidimensional decomposition of an algebraic closed
  set by means of lifting fibers.
\newblock {\em Journal of Complexity 19}, 4 (2003), 564 -- 596.

\bibitem{lotstedt:1986}
{\sc L{\"o}tstedt, P., and Petzold, L.~R.}
\newblock Numerical solution of nonlinear differential equations with algebraic
  constraints {I}: Convergence results for backward differentiation formulas.
\newblock {\em Mathematics of Computation 46}, 174 (Apr. 1986), 491--516.

\bibitem{matera:2003}
{\sc Matera, G., and Sedoglavic, A.}
\newblock Fast computation of discrete invariants associated to a differential
  rational mapping.
\newblock {\em Journal of Symbolic Computation 36}, 3--4 (Sept.-Oct. 2003),
  473--499.

\bibitem{matsumura:1970}
{\sc Matsumura, H.}
\newblock {\em Commutative Algebra}, 2~ed., vol.~56 of {\em Mathematics Lecture
  Note}.
\newblock WA Benjamin, 1970.

\bibitem{mattsson:1993}
{\sc Mattsson, S.~E., and S{\"o}derlind, G.}
\newblock Index reduction in differential-algebraic equations using dummy
  derivatives.
\newblock {\em SIAM Journal of Scientific Computing 14}, 3 (May 1993),
  677--692.

\bibitem{mishra:1993}
{\sc Mishra, B.}
\newblock {\em Algorithmic algebra}.
\newblock Springer-Verlag New York, Inc., New York, NY, USA, 1993.

\bibitem{ollivier:2007}
{\sc Ollivier, F., and Sadik, B.}
\newblock La borne de {J}acobi pour une diffi{\'e}t{\'e} d{\'e}finie par un
  syst{\`e}me quasi r{\'e}gulier.
\newblock {\em Comptes Rendus de l'Acad{\'e}mie des sciences 345}, 3 (Aug.
  2007), 139--144.

\bibitem{pantelides:1988}
{\sc Pantelides, C.~C.}
\newblock The consistent initialization of differential-algebraic systems.
\newblock {\em SIAM Journal on Scientific and Statistical Computing 9}, 2 (Mar.
  1988), 213--231.

\bibitem{petzold:1986}
{\sc Petzold, L.~R., and L{\"o}tstedt, P.}
\newblock Numerical solution of nonlinear differential equations with algebraic
  constraints {II}: Practical implications.
\newblock {\em SIAM Journal on Scientific and Statistical Computing 7}, 3 (July
  1986), 720--733.

\bibitem{poulsen:2001}
{\sc Poulsen, M.~Z.}
\newblock {\em Structural analysis of {DAE}s}.
\newblock PhD thesis, Informatics and Mathematical Modelling, Technical
  University of Denmark, 2001.

\bibitem{pritchard:2003}
{\sc Pritchard, F.~L.}
\newblock On implicit systems of differential equations.
\newblock {\em Journal of Differential Equations 194}, 2 (Nov. 2003), 328--363.

\bibitem{pritchard:2007}
{\sc Pritchard, F.~L., and Sit, W.~Y.}
\newblock On initial value problems for ordinary differential-algebraic
  equations.
\newblock {\em Radon Ser. Comp. Appl. Math. 1\/} (2007), 1--57.

\bibitem{pryce:2001}
{\sc Pryce, J.~D.}
\newblock A simple structural analysis method for daes.
\newblock {\em BIT Numerical Mathematics 41\/} (2001), 364 -- 394.

\bibitem{rabier:1994}
{\sc Rabier, P.~J., and Rheinboldt, W.~C.}
\newblock A geometric treatment of implicit differential-algebraic equations.
\newblock {\em Journal of Differential Equations 109}, 1 (Apr. 1994), 110--146.

\bibitem{reid:2001}
{\sc Reid, G.~J., Lin, P., and Wittkopf, A.~D.}
\newblock Differential elimination-completion algorithms for {DAE} and {PDAE}.
\newblock {\em Studies in Applied Mathematics 106}, 1 (Jan. 2001), 1--45.

\bibitem{ritt:1950}
{\sc Ritt, J.~F.}
\newblock {\em Differential {A}lgebra}, vol.~33 of {\em American Mathematical
  Society Colloquium Publications}.
\newblock American Mathematical Society, New York, N. Y., U.S.A., 1950.

\bibitem{ritt:1932}
{\sc Ritt, J.~P.}
\newblock {\em Differential equations from the algebraic standpoint}, vol.~14
  of {\em Amer. Math. Soc. Colloq. Publ.}
\newblock Walter De Gruyter Inc, 1932.

\bibitem{schost:2003}
{\sc Schost, {\'E}.}
\newblock Computing parametric geometric resolutions.
\newblock {\em Applicable Algebra in Engineering, Communication and Computing
  13}, 5 (Feb. 2003), 349--393.

\bibitem{seidenberg:1952}
{\sc Seidenberg, R.~A.}
\newblock Some basic theorems in differential algebra (characteristic $p$
  arbitrary).
\newblock {\em Trans. Amer. Math. Soc. 73\/} (1952), 174--190.

\bibitem{seiler:1999}
{\sc Seiler, W.~M.}
\newblock Indices and solvability of general systems of differential equations.
\newblock In {\em Computer Algebra in Scientific Computing 99\/} (1999), V.~G.
  Ghanza, E.~W. Mayr, and E.~V. Vorozhtsov, Eds., Springer, pp.~365--386.

\bibitem{shafarevich:1994}
{\sc Shafarevich, I.~R.}
\newblock {\em Basic algebraic geometry}, 2~ed., vol.~1.
\newblock Springer, 1994.

\end{thebibliography}
\end{small}
\end{document}